\newtheorem{theorem}{Theorem}[section] 
\newtheorem{lemma}[theorem]{Lemma}
\newtheorem{corollary}[theorem]{Corollary}
\theoremstyle{definition}
\newtheorem{definition}[theorem]{Definition}
\newtheorem{remark}[theorem]{Remark}
\newtheorem{example}[theorem]{Example}
\DeclareMathOperator{\tor}{Tor}
\DeclareMathOperator{\pd}{pd}
\DeclareMathOperator{\reg}{reg}
\newcommand{\B}{\mathcal{B}}
\title{Partial Betti splittings with applications to binomial edge ideals}
\date{\today   }
\author[A.V. Jayanthan]{A.V. Jayanthan}
\address[A.V. Jayanthan]
{Department of Mathematics, Indian Institute of Technology Madras, Chennai, Tamil Nadu, India - 600036}
\email{jayanav@iitm.ac.in }
\author[A. Sivakumar]{Aniketh Sivakumar}
\address[A. Sivakumar]
{Department of Mathematics, Tulane University, New Oreans, LA, 70118}
\email{asivakumar@tulane.edu}
\author[A. Van Tuyl]{Adam Van Tuyl}
\address[A. Van Tuyl]
{Department of Mathematics and Statistics\\
McMaster University, Hamilton, ON, L8S 4L8}
\email{vantuyla@mcmaster.ca}
\keywords{partial Betti splittings, graded Betti numbers,
binomial edge ideals, trees}
\subjclass[2020]{13D02, 13F65, 05E40}
\begin{document}


\begin{abstract}
We introduce the notion of a partial Betti splitting
of a homogeneous ideal, generalizing the notion
of a Betti splitting first given by Francisco, H\`a, and 
Van Tuyl.  Given a homogeneous ideal $I$ and two
ideals $J$ and $K$ such that $I = J+K$, a partial Betti splitting of $I$
relates {\it some} of the graded Betti of $I$ with those
of $J, K$, and $J\cap K$.   
As an application, we focus on the partial Betti
splittings of binomial edge ideals.  Using this new technique,
we generalize results of Saeedi Madani and Kiani related
to binomial edge ideals with cut edges, we describe a partial
Betti splitting for all binomial edge ideals, 
and we compute the total second
Betti number of binomial edge ideals of trees.
\end{abstract}

\maketitle

\section{Introduction}

Given a homogeneous ideal $I$ of a polynomial ring 
$R = k[x_1,\ldots,x_n]$ over an arbitrary field $k$, one is often
interested in the numbers $\beta_{i,j}(I)$, the graded Betti numbers of $I$, 
that are encoded into
the graded minimal free resolution of $I$.  In some situations, we 
can compute these numbers
by ``splitting'' the ideal $I$ into smaller ideals and use the graded
Betti numbers of these new ideals to find those of the ideal $I$.
More formally, suppose $\mathfrak{G}(L)$ denotes a set 
of minimal generators of a homogeneous ideal $L$.  Given a
homogeneous ideal $I$, we can ``split'' this ideal
as $I = J+K$ where $\mathfrak{G}(I)$ is the disjoint union of
$\mathfrak{G}(J)$ and $\mathfrak{G}(K)$.   The ideals $I, J, K$ and
$J \cap K$ are then related by the short exact sequence
$$0 \longrightarrow J\cap K \longrightarrow J \oplus K \longrightarrow J+K = I
\longrightarrow 0.$$
The mapping cone construction then implies that the graded Betti
numbers of $I$ satisfy
\begin{equation}\label{bettisplit}
\beta_{i,j}(I) \leq \beta_{i,j}(J) + \beta_{i,j}(K) + 
\beta_{i-1,j}(J \cap K) ~~\mbox{for all $i,j \geq 0$}.
\end{equation}
Francisco, H\`a, and Van Tuyl \cite{francisco_splittings_2008} defined
$I = J+K$ to be a {\it Betti splitting} if the above inequality
is an equality for all $i,j \geq 0$.

Betti splittings of monomial ideals first appeared in work 
of Eliahou and Kervaire \cite{EK1990}, 
Fatabbi \cite{fatabbi2001}, and Valla \cite{Valla2005}.  In fact, these prototypical
results  provided the inspiration for Francisco, 
H\`a, and Van Tuyl's introduction of Betti splittings in \cite{francisco_splittings_2008}. Their
paper also provided conditions on when
one can find Betti splittings of edge ideals, a monomial ideal associated to a graph (see \cite{francisco_splittings_2008} for more details). 
Betti splittings have proven to be a useful tool,
having been used to study: the graded Betti 
numbers of weighted edge ideals \cite{kara2022},  
the classification of Stanley-Reisner 
ideals of vertex decomposable ideals \cite{moradi2016},
the linearity defect of an ideal \cite{hop2016},
the depth function \cite{ficarra2023},
componentwise linearity \cite{bolognini2016},
and the Betti numbers of toric ideals \cite{FAVACCHIO2021409,gimenez2024}.

In general, an ideal $I$ may not have any Betti splitting.
However, it is possible that 
\Cref{bettisplit} may hold for {\it some} $i,j \geq 0$.
In order to quantify this behaviour, 
we introduce a new concept called
a {\it partial Betti splitting} of an ideal $I$.  Specifically,
if $I = J+K$ with $\mathfrak{G}(I)$ equal to the disjoint union
$\mathfrak{G}(J) \cup \mathfrak{G}(K)$, then 
$I = J+K$ is an {\it $(r,s)$-Betti splitting} if
    \[\beta_{i,j}(I) = \beta_{i,j}(J)+\beta_{i,j}(K)+\beta_{i-1, j}(J\cap K )\text{\hspace{3mm} for all $(i,j)$ with $i\geq r$ or $j\geq i+s$}.\]
Using the language of Betti tables, 
if $I = J+K$ is an $(r,s)$-Betti splitting, then 
all the Betti numbers in the $r$-th column and beyond 
or the $s$-th row  and beyond of the Betti table of $I$  satisfy \Cref{bettisplit}.  The Betti splittings of 
\cite{francisco_splittings_2008} will now called {\it
complete Betti splittings}.

The goal of this paper is two-fold.   First, we 
wish to develop the properties of partial Betti
splittings, extending the results of \cite{francisco_splittings_2008}.  Note that \cite{francisco_splittings_2008} focused on
Betti splittings of monomial ideals; however, as we show,
almost all the same arguments work for any 
homogeneous ideal $I$ of $R = k[x_1,\ldots,x_n]$ when 
$R$ is graded by a monoid $M$.   Among our results,
we develop necessary conditions for an $(r,s)$-Betti splitting:

\begin{theorem}[\Cref{parcon2}]
 Let $I$, $J$ and $K$ be homogeneous ideals of $R$
    with respect to the standard $\mathbb{N}$-grading such that $\mathfrak{G}(I)$ is the disjoint union of $\mathfrak{G}(J)$ and $\mathfrak{G}(K)$.  
    Suppose that there are integers $r$ and $s$ such that
    for all $i \geq r$ or $j \geq i+s$, $\beta_{i-1,j}(J \cap K) > 0$
    implies that $\beta_{i-1,j}(J) = 0$ and $\beta_{i-1,j}(K) = 0$.
    Then $I = J + K$ is an $(r,s)$-Betti splitting.
\end{theorem}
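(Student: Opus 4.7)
The plan is to extract the equality from the long exact sequence in $\tor$ arising from the defining short exact sequence
$$0 \longrightarrow J \cap K \longrightarrow J \oplus K \longrightarrow I \longrightarrow 0$$
after tensoring with $k$ over $R$. Let
$$f_i \colon \tor_i(J \cap K, k) \longrightarrow \tor_i(J, k) \oplus \tor_i(K, k)$$
denote the induced maps. Restricting to internal degree $j$ and doing the usual dimension count on the five-term piece of the long exact sequence around $\tor_i(I, k)_j$ yields the sharpened identity
$$\beta_{i,j}(I) = \beta_{i,j}(J) + \beta_{i,j}(K) + \beta_{i-1,j}(J \cap K) - \dim(\im f_i)_j - \dim(\im f_{i-1})_j,$$
which already refines the mapping-cone inequality \eqref{bettisplit}. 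Consequently, the Betti splitting equality at $(i,j)$ is equivalent to the simultaneous vanishing of $f_i$ and $f_{i-1}$ in internal degree $j$, and the entire task reduces to producing these vanishings from the hypothesis.

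The mechanism converting the hypothesis into these vanishings is elementary: if $\beta_{n,j}(J \cap K) > 0$ forces $\beta_{n,j}(J) = \beta_{n,j}(K) = 0$, then either the source or the target of the map $\tor_n(J \cap K, k)_j \to \tor_n(J, k)_j \oplus \tor_n(K, k)_j$ is zero, and the map vanishes. Applied at $n = i - 1$ using the hypothesis at the index $(i, j)$, this gives $f_{i-1}|_j = 0$; applied at $n = i$ using the hypothesis at the shifted index $(i+1, j)$, it gives $f_i|_j = 0$. Substituting both vanishings into the displayed identity produces the desired equality $\beta_{i,j}(I) = \beta_{i,j}(J) + \beta_{i,j}(K) + \beta_{i-1,j}(J \cap K)$.

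The main point requiring care is verifying that the shifted index $(i+1, j)$ lies in the region $\{i \geq r\} \cup \{j \geq i + s\}$ whenever $(i, j)$ does. If $i \geq r$ then clearly $i+1 \geq r$, and if $j \geq i + s + 1$ then $j \geq (i+1) + s$; the low-homological-degree row $i = 0$ is automatic, since $\beta_{-1, j}(J \cap K) = 0$ and the assumption that $\mathfrak{G}(I) = \mathfrak{G}(J) \sqcup \mathfrak{G}(K)$ gives $\beta_{0,j}(I) = \beta_{0,j}(J) + \beta_{0,j}(K)$ at once. The remaining boundary pairs along the line $j = i + s$ with $1 \leq i < r - 1$ must be handled by separate inspection, and I expect this index bookkeeping to be the main (but essentially routine) technical hurdle in the proof; once it is settled, the formal consequence of the long exact sequence described above completes the argument.
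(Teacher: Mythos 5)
Your argument follows the same route as the paper's own proof: the identity
$\beta_{i,j}(I) = \beta_{i,j}(J)+\beta_{i,j}(K)+\beta_{i-1,j}(J\cap K) - \dim_k(\im f_i)_j - \dim_k(\im f_{i-1})_j$
is exactly what is established in \Cref{singlesplit}, and the mechanism ``the hypothesis kills either the source or the target of the connecting map'' is \Cref{parcon}. The paper's proof of \Cref{parcon2} is precisely your second paragraph: apply the hypothesis at $(i,j)$ to kill $f_{i-1}$ in degree $j$ and at the shifted pair $(i+1,j)$ to kill $f_i$ in degree $j$, then invoke \Cref{parcon}. Your treatment of the row $i=0$ is also fine.

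The step you defer, however, is a genuine gap and not routine bookkeeping. For a boundary pair $(i,j)$ with $j=i+s$ and $1\leq i\leq r-2$, the shifted pair $(i+1,j)$ satisfies neither $i+1\geq r$ nor $j\geq (i+1)+s$, so the hypothesis says nothing about $\beta_{i,j}(J\cap K)$, $\beta_{i,j}(J)$, or $\beta_{i,j}(K)$, and there is no remaining data with which to carry out the ``separate inspection'' you propose: nothing forces $f_i$ to vanish in degree $j$ at such a pair, even though $(i,j)$ lies in the region where the conclusion asserts the splitting equality. You should know that the paper's own proof elides exactly this point --- it asserts that $(i+1,j)$ ``also satisfies $i+1\geq r$ or $j\geq i+s$,'' which states the membership condition for $(i,j)$ rather than for $(i+1,j)$ and fails precisely on the line $j=i+s$ below column $r-1$. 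So you have correctly isolated the delicate point, but closing it requires changing something: either strengthen the hypothesis (e.g., assume it for all $i\geq r$ or $j\geq i+s-1$, so that the shifted pair always remains covered) or restrict the conclusion so as to exclude those boundary pairs. Off that line your argument is complete and coincides with the paper's.
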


Second, we wish to illustrate (partial) 
Betti splittings by considering splittings of binomial
edge ideals.  If $G = (V(G,E(G))$ is a
graph on the vertex set $V = [n] :=\{1,\ldots,n\}$ 
and edge set
$E$, the {\it binomial edge ideal of $G$} is the binomial
ideal 
$J_G = \langle x_iy_j - x_jy_i ~|~ \{i,j\} \in E \rangle$ 
in the polynomial ring 
$R = k[x_1,\ldots,x_n,y_1,\ldots,y_n]$. 
Binomial edge ideals, 
which were first introduced 
in \cite{herzog_binomial_2010,Ohtani2011}, 
have connections to algebraic statistics, among other areas.  The past decade has 
seen a flurry of
new results about the homological invariants (e.g., Betti numbers,
regularity, projective dimension) for this family of ideals (see \cite{ZZ13}, \cite{SZ14}, \cite{deAlba_Hoang_18}, \cite{herzog_extremal_2018}, \cite{KS20}, \cite{jayanthan_almost_2021} for a partial list on the Betti numbers of binomial edge ideals). 
Interestingly, Betti splittings of binomial
edge ideals have not received any attention, 
providing additional motivation to study this family of ideals. 

In order to split $J_G$, we wish to partition the generating set $\mathfrak{G}(J_G)$ in such a 
way that the resulting ideals 
generated by each partition, say $J$ and $K$, are the binomial edge 
ideals of some subgraphs of $G$, that is, 
splittings of the form $J_G = J_{G_1}+J_{G_2}$ where $G_1$ and $G_2$ are 
subgraphs.   We focus on two natural 
candidates. The first  way
is to fix an edge $e = \{i,j\} \in E(G)$ and consider the splitting
$$J_G = J_{G\setminus e} + \langle x_iy_j- x_jy_i  \rangle.$$
where $G\setminus e$ denotes the graph $G$ with the
edge $e$ removed. 
The second way is to fix a vertex $s \in V(G)$ and consider
the set $F \subseteq E(G)$ of all edges that contain the vertex $s$.  We can
then split $J_G$ as follows
$$J_G = \langle x_sy_j-x_jy_s ~|~ \{s,j\} \in F \rangle + 
\langle x_ky_j-x_jy_k ~|~ \{k,l\} \in E(G) \setminus F \rangle.$$
We call such a partition an $s$-partition of $G$.
Note that the first ideal is the binomial edge ideal
of a star graph, while the second ideal is the binomial
edge ideal of the graph $G \setminus \{s\}$, 
the graph with the vertex $s$ removed.
These splittings are reminiscent of 
the edge splitting
of edge ideals and the $x_i$-splittings of monomial
ideals introduced in \cite{francisco_splittings_2008}.

In general, neither of these splitting will give us a 
complete Betti splitting.  This is not too surprising 
since the edge ideal analogues are not always complete 
Betti splittings.  So it is natural
to ask when we have a partial or complete  Betti
splitting using
either division of $J_G$.  
Among our results in Section 4, we give
a sufficient condition on an edge $e$ of
$G$ so that the first partition gives a complete
Betti splitting.  In the statement below,
an edge is a cut-edge if $G \setminus e$ has
more connected components than $G$, and a vertex
is free if it belongs to a unique maximal clique, a subset
of vertices
of $G$ such that all the vertices are all adjacent to each other.  

\begin{theorem}[\Cref{singlefreevertex}]\label{them2}
    Let $e = \{u,v\} \in E(G)$ be a cut-edge where $v$ is a free vertex in $G\setminus e$. Then
    $J_G = J_{G\setminus e}+\langle x_uy_v-x_vy_u\rangle$ is a complete Betti splitting.
\end{theorem}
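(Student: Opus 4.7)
The plan is to apply \Cref{parcon2} to the decomposition $J_G = J + K$ with $J := J_{G \setminus e}$ and $K := \langle f \rangle$, where $f := x_u y_v - x_v y_u$, choosing $r = 1$ (so that the $(1,s)$-Betti splitting produced coincides with a complete Betti splitting, since the $i=0$ equality is automatic from the disjoint generator partition). Since $K$ is a principal ideal generated in degree $2$, its only nonzero graded Betti number is $\beta_{0,2}(K) = 1$; and since $f \notin J$, we have $1 \notin (J:f)$, so every minimal generator of $J \cap K = f \cdot (J:f)$ has degree at least $3$, giving $\beta_{0,2}(J \cap K) = 0$. This handles the $K$-side of the hypothesis of \Cref{parcon2}; the remaining task is to verify that $\beta_{i-1,j}(J \cap K) > 0$ forces $\beta_{i-1,j}(J) = 0$ for every $i \geq 1$ and every $j$.

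The first main step is to describe the colon ideal $(J:f)$ combinatorially. Since $e$ is a cut-edge, the vertices $u$ and $v$ lie in distinct connected components $H_1 \ni u$ and $H_2 \ni v$ of $G \setminus e$, so $J = J_{H_1} + J_{H_2}$ lives on disjoint variable sets. I would then use the primary decomposition $J = \bigcap_S P_S(G \setminus e)$ over the cut sets $S$ of $G \setminus e$ to pin down which primes contain $f$: one checks directly from the form of $P_S$ that $f \in P_S$ if and only if $u \in S$ or $v \in S$ (the ``same-component'' alternative is ruled out by the cut-edge condition, which puts $u$ and $v$ in different components of $(G \setminus e) \setminus S$ for every $S$). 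The freeness of $v$ implies that $v$ is not a cut vertex of any induced subgraph $(G \setminus e) \setminus T$, because the neighbors of $v$ form a clique which remains connected after deleting $v$; consequently no cut set of $G \setminus e$ contains $v$. Since $J$ is radical, $(J:f) = \bigcap_{f \notin P_S} P_S$, which simplifies to an intersection of $P_S$'s with $u \notin S$.

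The hard part is then the bidegree comparison. With $(J:f)$ in hand, the minimal free resolution of $J \cap K = f \cdot (J:f)$ is obtained from that of $(J:f)$ by shifting the internal degree by $2$, and one must verify that every position $(i-1, j)$ at which $\beta_{i-1,j}(J \cap K)$ is nonzero lies strictly outside the support of $\beta_{i-1,\bullet}(J)$. Here the disjoint-variable factorization $J = J_{H_1} + J_{H_2}$ is decisive: by a K\"unneth argument, the Tor modules of $R/J$ factor as a tensor product of those of $R_1 / J_{H_1}$ and $R_2 / J_{H_2}$, which yields a convolution description of the Betti table of $J$ that is controlled by the tables of the two components; the degree-$2$ shift introduced by $f$ then pushes the Betti support of $J \cap K$ into a region disjoint from that of $J$. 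The principal technical obstacle is handling the case when $u$ is itself a cut vertex of $H_1$, where $(J:f) \supsetneq J$ and the structure of the extra primes $P_S$ with $u \in S$ must be carefully analyzed to establish the degree-disjointness; once this is done, \Cref{parcon2} produces the complete Betti splitting.
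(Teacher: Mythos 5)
Your reduction to \Cref{parcon2} with $r=1$ and your handling of the $\langle f_e\rangle$-side are fine, and the identification $(J_{G\setminus e}:f_e)=J_{(G\setminus e)_e}$ that your primary-decomposition analysis is groping toward is exactly \Cref{lemma 3.8}. The gap is in the step you label as the hard part: the claim that the degree-$2$ shift ``pushes the Betti support of $J\cap K$ into a region disjoint from that of $J$'' in the standard grading is false in general, and no K\"unneth/convolution argument can rescue it, because the statement fails even when one component is trivial. Concretely, take $H$ to be the graph of \Cref{runningexample} and let $G=H\sqcup\{8\}$ together with the edge $e=\{6,8\}$ (or, if you prefer a non-pendant free vertex, glue a triangle on $\{8,9,10\}$ instead of the single vertex $8$). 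Then $e$ is a cut edge, $v=8$ is free in $G\setminus e$, and $(G\setminus e)_e=G\setminus e$, so $J_{G\setminus e}\cap\langle f_e\rangle\cong J_{G\setminus e}(-2)$. From the Betti table in \Cref{runningexample2} one reads $\beta_{2,6}(J_{G\setminus e}\cap\langle f_e\rangle)=\beta_{2,4}(J_{G\setminus e})=3>0$ while $\beta_{2,6}(J_{G\setminus e})=24\neq 0$: the two supports overlap, so the hypothesis of \Cref{parcon2} fails at $(i,j)=(3,6)$ in the standard grading even though the conclusion of the theorem is true. In short, \Cref{parcon} and \Cref{parcon2} give sufficient, not necessary, conditions, and the standard-graded sufficient condition simply does not hold here whenever some column of the Betti table of $J_{(G\setminus e)_e}$ and the corresponding column of $J_{G\setminus e}$ meet two rows apart --- which happens as soon as the regularity is $4$ or more.

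The paper avoids this by never attempting a standard-graded (or even multigraded) disjointness argument in the free-vertex case. It first proves the pendant case (\Cref{maintheo}) using the $\mathbb{N}^n$-multigrading, where disjointness genuinely holds because $J_{G\setminus e}$ involves no $x_v,y_v$ when $\deg v=1$, so every multidegree in the support of $J_{G\setminus e}\cap\langle f_e\rangle$ has positive $v$-th coordinate while none in the support of $J_{G\setminus e}$ does. For a general free vertex $v$ this multigraded disjointness also fails (both ideals then have syzygies in multidegrees with positive $u$- and $v$-components), so the paper instead observes that $G=(G_1\cup\{e\})\cup_v G_2$ is a decomposable graph, invokes the product formula for Betti polynomials (\Cref{freevertexbetti}), and expands the convolution to reduce the free-vertex case to the already-proved pendant case. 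If you want to salvage your approach, you must either pass to the multigrading and restrict to pendant $v$, or replace the disjointness claim by the decomposability argument; as written, the proposal proves neither the pendant case (since you work in the standard grading) nor the general case.
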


\noindent
Theorem \ref{them2} generalizes 
previous work of
Saeedi Madani and Kiani \cite{kiani_regularity_2013-1},
and it allows us to give new proofs for their
results about the Betti numbers, regularity, and
projective dimension for some classes
of binomial edge ideals (see \Cref{freecutedge}).

In the case of $s$-partitions, we again do not always have
a complete Betti splitting.  However, we can derive
a result about the partial Betti splittings
for all graphs. 

\begin{theorem}[\Cref{maintheo2}]
    Let $J_G$ be the binomial edge ideal of a graph $G$ and let $J_G = J_{G_1}+J_{G_2}$ be an $s$-partition of $G$. Let $c(s)$ be the size of the largest clique that contains $s$. Then
    $$
        \beta_{i,j}(J_G) = \beta_{i,j}(J_{G_1})+\beta_{i,j}(J_{G_2})+\beta_{i-1, j}(J_{G_1}\cap J_{G_2})~~~
        \mbox{for all $(i,j)$ with $i\geq c(s)$ or $j\geq i+4$.}
    $$
    In other words, $J_G = J_{G_1}+J_{G_2}$ is a 
    $(c(s), 4)$-Betti splitting.
\end{theorem}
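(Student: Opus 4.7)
The plan is to apply Theorem~\ref{parcon2} with $r = c(s)$ and $s = 4$. Thus it suffices to prove: for all $(i,j)$ with $i \geq c(s)$ or $j \geq i+4$, if $\beta_{i-1,j}(J_{G_1} \cap J_{G_2}) > 0$, then $\beta_{i-1,j}(J_{G_1}) = 0$ and $\beta_{i-1,j}(J_{G_2}) = 0$.

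First I would bound the regularity of the star ideal. Since $J_{G_1}$ is the binomial edge ideal of the star with center $s$ and leaves $N_G(s)$, which has no induced path of length greater than $2$, general results on binomial edge ideals of star-like subgraphs (e.g.\ along the lines of Saeedi Madani--Kiani or Matsuda--Murai) yield $\text{reg}(J_{G_1}) \leq 3$. This forces $\beta_{i-1,j}(J_{G_1}) = 0$ whenever $j - (i-1) \geq 4$, i.e.\ whenever $j \geq i+3$, which handles the vanishing of $\beta_{i-1,j}(J_{G_1})$ for any $(i,j)$ with $j \geq i+4$.

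Next I would describe the intersection $J_{G_1} \cap J_{G_2}$ explicitly. Because $J_{G_1} \subseteq (x_s, y_s)$, every element of $J_{G_1} \cap J_{G_2}$ lies in $(x_s, y_s)$, and a direct computation shows that every minimal generator of the intersection has degree at least $3$. The degree-$3$ minimal generators are precisely the elements $x_s(x_u y_v - x_v y_u)$ and $y_s(x_u y_v - x_v y_u)$ coming from edges $\{u,v\}$ of $G \setminus s$ with $u, v \in N_G(s)$ (equivalently, triangles $\{s,u,v\}$ of $G$). Higher-degree minimal syzygies and generators are governed by the larger cliques through $s$, contributing Eagon--Northcott-type pieces to the resolution of $J_{G_1}\cap J_{G_2}$.

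With this structural description, I would verify the two vanishing statements. For the row condition $j \geq i+4$: beyond the $J_{G_1}$ vanishing already noted, the key observation is that each minimal generator (and syzygy) of $J_{G_1}\cap J_{G_2}$ carries a factor of $x_s$ or $y_s$, so the relevant Betti positions of the intersection are shifted by at least one homological degree and one internal degree away from those of $J_{G_2}$; I would use this shift to conclude $\beta_{i-1,j}(J_{G_2}) = 0$ whenever $\beta_{i-1,j}(J_{G_1}\cap J_{G_2}) > 0$ and $j \geq i+4$. For the column condition $i \geq c(s)$: a clique of size $k$ through $s$ contributes to the resolution of $J_{G_1}\cap J_{G_2}$ through an Eagon--Northcott strand of projective dimension at most $k-2$; since the largest such clique has size $c(s)$, any nonzero Betti number $\beta_{i-1,j}(J_{G_1}\cap J_{G_2})$ with $i \geq c(s)$ must arise from the interaction of multiple clique contributions and can be shown to sit in a position where both $J_{G_1}$ (by the regularity bound) and $J_{G_2}$ (by a parity/shift argument) have no Betti.

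The main obstacle will be giving a sufficiently precise description of $J_{G_1}\cap J_{G_2}$ in the general case, where not every edge of $G \setminus s$ between two neighbors of $s$ completes a triangle with $s$, together with verifying that the clique-driven Eagon--Northcott contributions really do cut off the intersection's resolution in the right columns. The clique parameter $c(s)$ enters precisely through these Eagon--Northcott strands, while the constant $4$ in the row condition comes from the regularity bound $\text{reg}(J_{G_1})\le 3$ combined with the extra degree shift produced by the factor $x_s$ or $y_s$ carried by every generator of the intersection.
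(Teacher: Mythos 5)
Your high-level strategy (reduce to the vanishing criterion, identify the degree-$3$ generators $x_sf_{a,b}, y_sf_{a,b}$ of the intersection, use $\operatorname{reg}(J_{G_1})=3$ for the star) matches the paper's, but there is a genuine gap in the way you propose to verify the hypothesis: you cannot run the argument in the standard $\mathbb{N}$-grading, because the standard-graded vanishing ``$\beta_{i-1,j}(J_{G_1}\cap J_{G_2})>0 \Rightarrow \beta_{i-1,j}(J_{G_2})=0$'' is simply false, even at positions inside the claimed range. The paper's own running example (\Cref{runningexample2}, which is the $s$-partition with $s=1$ and $c(s)=4$) has $\beta_{3,7}(J_{G_1}\cap J_{G_2})=4>0$ and $\beta_{3,7}(J_{G_2})=14>0$, while $(i,j)=(4,7)$ lies in the range $i\geq c(s)$; the splitting formula nevertheless holds there. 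So \Cref{parcon2} applied with the standard grading can never close the argument, and your ``shift by one homological and one internal degree'' heuristic does not repair this: $J_{G_2}=J_{G\setminus s}$ can have large regularity and its Betti table genuinely overlaps that of the intersection in the standard grading. The correct mechanism, which you gesture at but do not use, is the $\mathbb{N}^n$-multigrading: every generator of $J_{G_1}\cap J_{G_2}$ involves $x_s$ or $y_s$, so its multigraded Betti numbers are supported on multidegrees $\mathbf{a}$ with $a_s>0$, whereas those of $J_{G_2}$ are supported on $a_s=0$. One must apply the splitting criterion (\Cref{parcon}) multidegree by multidegree and only then sum to recover the standard-graded statement.

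The column condition $i\geq c(s)$ also needs more than the Eagon--Northcott heuristic you offer. The crucial fact is the exact vanishing $\beta_{i,i+3}(J_{G_1}\cap J_{G_2})=0$ for $i\geq c(s)-1$, which forces every nonzero Betti number of the intersection in homological degree $\geq c(s)-1$ into total degree $\geq i+4$, where the star ideal $J_{G_1}$ contributes nothing. The paper obtains this by writing the degree-$3$ piece as $I=I_x+I_y$ with $I_x\cong J_{G'}(-1)$, $I_y\cong J_{G'}(-1)$, $I_x\cap I_y\cong J_{G'}(-2)$ for $G'$ the induced subgraph on $N_G(s)$, deducing $\beta_{i,i+3}(J_{G_1}\cap J_{G_2})=2\beta_{i,i+2}(J_{G'})+\beta_{i-1,i+1}(J_{G'})$, and then invoking the clique-complex formula $\beta_{i,i+2}(J_{G'})=(i+1)f_{i+1}(\Delta(G'))$, whose vanishing for $i\geq c(s)-2$ is exactly where $c(s)$ enters. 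Your sketch asserts the conclusion but supplies no mechanism for it; without this computation the claim that nonzero intersection Betti numbers with $i\geq c(s)$ ``sit where both $J_{G_1}$ and $J_{G_2}$ have no Betti'' is unsupported.
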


\noindent
Note that if $G$ is a triangle-free graph, then for 
every vertex
$i \in V(G)$ we have $c(i) \leq 2$.  We can use the above result
to construct a complete Betti splitting for the
binomial edge ideals of all triangle-free graphs (see Corollary \ref{trianglefree}).

In the final section, we use the complete Betti splitting
of \Cref{them2} to explore the (total) graded Betti numbers of binomial 
edge ideals of trees.   In particular, we give
formulas for the first and second total Betti numbers for the binomial edge ideal of any tree. Our result  extends work
of Jayanthan, Kumar, and Sarkar \cite{jayanthan_almost_2021} which computed
the first total Betti numbers for these ideals.

Our paper is structured as follows.  In Section 2 we recall the 
relevant background.  In Section 3 we introduce
the notion of a partial Betti splitting and describe 
some of their basic properties.  In Section 4, we consider
splittings of $J_G$ using a single edge of $G$, while in
Section 5, we consider a splitting of $J_G$ by partitioning
the generators on whether or not they contain $x_s$ or $y_s$ for 
a fixed vertex $s$.
In our final section we determine the second
total Betti number of binomial edge ideals of trees.


\section{Preliminaries}

In this section we recall the relevant background on Betti numbers,
graph theory, and binomial edge ideals that
is required for later results.  

\subsection{Homological algebra}
Throughout
this paper $k$ will denote an arbitrary field.  Let 
$R = k[x_1,\ldots,x_n]$ be a polynomial ring over $k$.  
We will use various gradings of $R$. Recall
that if $M$ is a monoid (a set with an addition operation and
additive identity), we say a ring $S$ is {\it $M$-graded} if
we can write $S = \bigoplus_{j \in M} S_j$, where
each $S_j$ is an additive group and $S_{j_1}S_{j_2} \subseteq 
S_{j_1+j_2}$ for all $j_1,j_2 \in M$.  We will primarily
use three gradings of $R$ in this paper: (1) $R$ has an $\mathbb{N}$-grading
by setting $\deg(x_i) = 1$ for all $i$; (2) $R$ has an $\mathbb{N}^n$-grading
by setting $\deg(x_i) = e_i$ for all $i$, where
$e_i$ is the standard basis element of $\mathbb{N}^n$;
and (3) $R$ has an $\mathbb{N}^2$-grading by
setting the degree of some of the $x_i$'s to $(1,0)$,
and the degrees of the rest of the $x_i$'s to $(0,1)$.

Given an $M$-graded ring $R$, an element $f \in R$ is 
{\it homogeneous} if $f \in R_j$ for some $j \in M$.
We say the {\it degree} of $f$ is $j$ and write
$\deg(f) = j$.
An ideal $I \subseteq R$ is
{\it homogeneous} if it is generated by
homogeneous elements.  We write $I_j$ to denote
all the homogeneous elements of degree $j\in M$ in $I$. We let $\mathfrak{G}(I)$ denote
a minimal set of homogeneous generators of $I$.  
While the choice of elements of
$\mathfrak{G}(I)$ may not be unique, the number of
generators of a particular degree is an
invariant of the ideal.   If $I$ is
a homogeneous ideal, then the 
Tor modules ${\rm Tor}_i(k,I)$ are also $M$-graded
for all $i \geq 0$.
The {\it $(i,j)$-th graded Betti number of
$I$} is then defined to be
$$\beta_{i,j}(I) := \dim_k {\rm Tor}_i(k,I)_j 
~~\mbox{for $i \in \mathbb{N}$ and $j \in M$.}$$
We use the convention that $\beta_{i,j}(I) = 0$ if $i <0$.
We are sometimes interested in the (multi)-graded Betti
numbers of the quotient $R/I$;  we make use of the
identity $\beta_{i,j}(R/I) = \beta_{i-1,j}(I)$ for all $i \geq 1$
and $j \in M$.  The graded Betti number $\beta_{i,j}(I)$ is also equal
to the number of syzygies of degree $j$ in the $i$-th
syzygy module of $I$.  For further details, see the book
of Peeva \cite{P2011}.

When $R$ has the standard $\mathbb{N}$-grading, we are also
interested in the following two invariants: the {\it (Castelnuovo-Mumford) regularity of $I$}, which is defined as 
$${\rm reg}(I) = \max\{ j-i ~|~ \beta_{i,i+j}(I) \neq 0\},$$
and the {\it projective dimension of $I$}, which is defined as
$${\rm pd}(I) = \max\{i ~|~ \beta_{i,j}(I) \neq 0\}.$$
These invariants measure the ``size'' of the minimal graded free
resolution of $I$.

\subsection{Graph theory}
Throughout this paper, we use $G = (V(G),E(G))$ to 
represent a finite simple graph where $V(G)$
denotes the vertices and $E(G)$ denotes the edges.  
Most of our graphs will have the vertex set $[n] = \{1,\dots ,n\}$. 

A {\it subgraph} of $G$ is a graph $H$ such 
that $V(H)\subseteq V(G)$ and $E(H)\subseteq E(G)$. An 
\textit{induced subgraph} on $S\subset V(G)$, denoted by $G[S]$, is a the subgraph with vertex set $S$ and 
for all $u,v\in S$, if $\{u,v\}\in E(G)$, then $
\{u,v\}\in E(G[S])$. The {\it complement} of a graph, 
denoted $G^c$, is a graph with 
$V(G^c) = V(G)$ and $E(G^c) = \{\{u,v\}\mid \{u,v\}\notin E(G)\}$. 

From a given graph $G = (V(G),E(G))$, if $e \in E(G)$, then
we denote by $G\setminus e$ the subgraph of $G$ on the same
vertex set, but edge set $E(G\setminus e) = E(G) \setminus \{e\}$.  
Given any $i \in V(G)$, we let $N_G(i) = \{j ~|~ \{i,j\} \in E(G)\}$
denote the set of {\it neighbours} of the vertex $i$.  
The {\it degree} of a vertex $i$ is then $\deg_G i  = |N_G(i)|$. In the context where there is a fixed underlying graph, we omit the subscript $G$ and write this as $\deg i$.
The {\it closed neighbourhood of $i$} is the set
$N_G[i] =N_G(i) \cup \{i\}$.
If $G = (V(G),E(G))$ is a graph and $e =\{i,j\} \not\in E(G)$, 
we let $G_e$ denote the graph on $V(G)$, but with edge
set 
$$E(G_e) = E(G) \cup \{\{k,l\} ~|~ k,l \in N_G(i)~~\mbox{or}~~k,l \in N_G(j) \}.$$
So, $G$ is a subgraph $G_e$.

We  will require a number of special families of graphs.
The \textit{$n$-cycle}, denoted $C_n$, is the graph
with vertex set  $[n]$ with $n \geq 3$ and edge 
set $\{\{i,i+1\} ~|~ i =1,\ldots,n-1\} \cup \{\{1,n\}\}.$
A \textit{chordal graph} $G$ is a graph where 
all the induced subgraphs of $G$
that are cycles are 3-cycles, that is, there are no 
induced $n$-cycles with $n\geq 4$. 
A \textit{triangle-free graph} is a graph $G$ 
such that $C_3$ is not an induced subgraph of $G$.
A \textit{tree} is a graph which has no induced cycles.  A particular example of a tree that we will use is the {\it star graph} on $n$ vertices,
denoted $S_n$.  Specifically, $S_n$ is the graph on the vertex
set $[n]$ and edge set $E(S_n) = \{\{1,k\}\mid 1<k\leq n\}$.

A \textit{complete graph} is a graph $G$ where  $\{u,v\}\in E(G)$ for 
all $u,v\in V(G)$.   If $G$ is a complete graph on $[n]$, we denote
it by $K_n$. A \textit{clique} in a graph $G$
is an induced subgraph $G[S]$ that is a complete
graph. A \textit{maximal clique} is a clique that is not contained 
in any larger clique.

A vertex $v$
of $G$ is a \textit{free vertex} if $v$ only belongs to
a unique maximal clique in $G$, or equivalently, the induced
graph on $N_G(v)$ is a clique.
An edge $e = \{u,v\}$ in $G$ is a \textit{cut edge} if its deletion from $G$ yields a graph with more connected components than $G$.  
Note that a tree is a graph where all of its edges are cut edges.
A \textit{free cut edge} is a cut edge $\{u,v\}$ such that both ends, $u$ and $v$, are free vertices in $G \setminus e$.

We are also interested in cliques combined with other graphs.
A graph $G$ is said to be a \textit{clique-sum} of $G_1$ and $G_2$, 
denoted by $G = G_1 \cup_{K_r} G_2$, 
if $V(G_1) \cup V(G_2) = V(G)$, $E(G_1) \cup E(G_2) = E(G)$ and
the induced graph on
$V(G_1) \cap V(G_2)$ is the clique $K_r$. If $r = 1$, then we write $G = G_1 \cup_v G_2$ for the clique-sum $G_1 \cup _{K_1} G_s$ where $V(K_1) = \{v\}$.  A graph $G$ is \textit{decomposable} 
if there exists subgraphs $G_1$ and $G_2$ such that $G_1\cup_{v}G_2 = G$ and $v$ is a free vertex of $G_1$ and $G_2$.  
So a decomposable graph is an example
of a clique-sum  on a $K_1$ where the $K_1$ is a free vertex in both
subgraphs.

\begin{example}
    Consider the graph $G$ in \Cref{fig:graph5}, with $V(G) = [7]$ and 
    $$E(G) = \{\{1,2\}, \{2,3\}, \\\{2,4\}, \{4,5\}, \{4,6\}, \{4,7\}, \{6,7\}\}.$$ Here, we can see that $G = T \cup_{\{4\}} K_3$, where $T$ is the tree with $V(T) = \{1,2,3,4,5\}$ and $E(T) = \{\{1,2\}, \{2,3\}, \{2,4\}, \{4,5\}\}$ and $K_3$ is the clique of size $3$, with $V(K_3) = \{4,6,7\}$ and $E(K_3) = \{\{4,6\}, \{4,7\}, \{6,7\}\}$.

    \begin{figure}[ht]
        \centering
\begin{tikzpicture}[every node/.style={circle, draw, fill=white!60, inner sep=2pt}, node distance=1.5cm]
    \node (1) at (0, 0) {1};
    \node (2) at (1.5, 0) {2};
    \node (3) at (3, 0) {3};
    \node (4) at (1.5, -1.5) {4};
    \node (5) at (0, -1.5) {5}; 
    \node (6) at (0.5, -2.5) {6};
    \node (7) at (2.5, -2.5) {7};

    \draw (1) -- (2);
    \draw (2) -- (3);
    \draw (2) -- (4);
    \draw (4) -- (5);
    \draw (4) -- (6);
    \draw (4) -- (7);
    \draw (6) -- (7);
\end{tikzpicture}

        \caption{$G = T\cup_{\{4\}}K_3$}
        \label{fig:graph5}
    \end{figure}
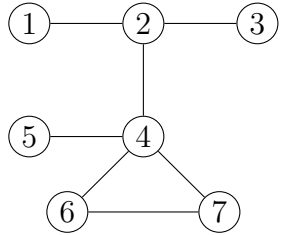
\end{example}


\subsection{Binomial edge ideals}
Suppose that $G = (V(G),E(G))$ is a finite simple graph
with $V(G) = [n]$.  The {\it binomial edge ideal} of $G$,
denoted $J_G$, is the binomial ideal
$$J_G = \langle x_iy_j - x_jy_i ~|~ \{i,j\} \in E(G) \rangle$$
in the polynomial ring $R = k[x_1,\ldots,x_n,y_1,\ldots,y_n]$. 
In what follows, we will find it convenient to consider
different gradings of $R$;  we can 
grade the polynomial ring $R$ either with the standard grading 
where $\deg x_i=\deg y_i=1$ for all $i$, with 
an $\mathbb{N}^n$-multigrading 
where $\deg x_i=\deg y_i=(0,\dots,1,\dots, 0)$, the $i$-th unit vector
for all $i$,
or with an $\mathbb{N}^2$-grading where $\deg x_i = (1,0)$ for 
all $i$ and $\deg y_j = (0,1)$ for all $j$. Note that
$J_G$ is a homogeneous ideal with respect to all three gradings.

We review some useful facts from the literature about the idea
$J_G$.  Recall that a standard graded ideal
$I$ has {\it linear resolution} if $I$ is generated by
homogeneous elements of degree $d$  and $\beta_{i,i+j}(I) = 0$
for all $j \neq d$. 

\begin{theorem}\label{completebetti}
Let $G = K_n$ be a complete graph. Then
\begin{enumerate}   
\item
    The binomial edge ideal  $J_G$ has a linear resolution.
    \item $\beta_{i,i+2}(J_G) = (i+1)\binom{n}{i+2}$ for $i \geq 0$ and $0$ otherwise.
    \end{enumerate}
\end{theorem}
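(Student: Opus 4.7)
The plan is to recognize $J_{K_n}$ as a classical determinantal ideal and then read off both statements from the Eagon--Northcott resolution. Since every pair $\{i,j\}$ with $1\leq i<j\leq n$ is an edge of $K_n$, the generators $x_iy_j-x_jy_i$ of $J_{K_n}$ are exactly the $2\times 2$ minors of the generic $2\times n$ matrix
\[
X=\begin{pmatrix} x_1 & x_2 & \cdots & x_n \\ y_1 & y_2 & \cdots & y_n \end{pmatrix},
\]
so $J_{K_n}=I_2(X)$, the ideal of maximal minors of $X$.

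Next I would invoke the Eagon--Northcott complex, which supplies a minimal $\mathbb{Z}$-graded free resolution of the ideal of maximal minors of a generic $p\times q$ matrix. Specializing to $p=2$, $q=n$, and setting $F=R^2$ and $G=R^n$, the resolution of $J_{K_n}$ has length $n-2$, with $i$-th free module $\mathrm{Sym}_i(F^*)\otimes \wedge^{i+2}G$ placed in internal degree $i+2$. Because each entry of $X$ is a variable, every differential is represented by a matrix of linear forms; hence the resolution is linear, which establishes (1) and shows that $\beta_{i,j}(J_{K_n})=0$ whenever $j\neq i+2$.

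For the Betti number formula in (2), the $i$-th free module has rank
\[
\rank \mathrm{Sym}_i(F^*)\cdot \rank \wedge^{i+2}G=(i+1)\binom{n}{i+2},
\]
and the minimality of the Eagon--Northcott complex then yields $\beta_{i,i+2}(J_{K_n})=(i+1)\binom{n}{i+2}$. I would sanity-check the formula in the small cases $n=3$ and $n=4$, where it predicts Betti sequences $3,2$ and $6,8,3$ respectively, both matching direct computation.

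There is no real obstacle here; the only care required is to match conventions for the Eagon--Northcott complex (whether one resolves $I$ or $R/I$, and whether $F$ or $F^*$ appears in the symmetric power), which is bookkeeping rather than mathematics. An alternative route would be to derive the formula inductively from $K_{n-1}$ by splitting off a vertex using the partial Betti splittings developed later in the paper, but this would be both circular in the present context and noticeably more laborious than simply citing the classical resolution.
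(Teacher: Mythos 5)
Your proposal is correct, but it takes a genuinely different route from the paper. The paper does not give a self-contained argument at all: it proves (1) by citing Kiani--Saeedi Madani's theorem that $J_{K_n}$ has a linear resolution, and (2) by specializing Herzog--Kiani--Saeedi Madani's general formula $\beta_{i,i+2}(J_G) = (i+1)f_{i+1}(\Delta(G))$ for the linear strand of an arbitrary binomial edge ideal to $G = K_n$, where $f_{i+1}(\Delta(K_n)) = \binom{n}{i+2}$. You instead identify $J_{K_n}$ with the ideal of $2\times 2$ minors of the generic $2\times n$ matrix and read both statements off the Eagon--Northcott complex: its minimality (all differentials past the first are matrices of linear forms in distinct variables, with no unit entries), its linearity, and the ranks $\dim \mathrm{Sym}_i(F^*)\cdot\dim\wedge^{i+2}G = (i+1)\binom{n}{i+2}$. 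Both approaches ultimately lean on known results, but yours is the more classical and self-contained one, makes the determinantal structure of $J_{K_n}$ explicit, and avoids importing machinery specific to binomial edge ideals; the paper's choice of citations has the advantage of staying inside the binomial-edge-ideal literature it uses elsewhere (it reuses the clique-complex formula as \Cref{linearbinom} later). Your caution about conventions for the Eagon--Northcott complex and your observation that an inductive derivation via the splitting results would be circular here are both apt.
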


\begin{proof}
Statement (1) follows from  {\cite[Theorem 2.1]{kiani_binomial_2012}}. Statement (2)
follows from a more general fact of
Herzog, Kiani, and Saaedi Madani \cite[Corollary 4.3]{herzog_linear_2017} on the Betti numbers
that appear in the linear 
strand of a binomial edge ideals applied to $K_n$.
\end{proof}
The next result is  related to a cut edge in a graph.

\begin{lemma}[{\cite[Theorem 3.4]{mohammadi_hilbert_2014}}]\label{lemma 3.8}
    Let $G$ be a simple graph and let $e = \{i,j\}\notin E(G)$ be a cut
    edge in $G\cup \{e\}$. Let $f_e = x_iy_j-x_jy_i$. Then
    $J_G:\langle f_e \rangle = J_{G_e}$.
\end{lemma}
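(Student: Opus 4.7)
The plan is to prove the two inclusions $J_{G_e} \subseteq J_G:\langle f_e\rangle$ and $J_G:\langle f_e\rangle \subseteq J_{G_e}$ separately, with the cut-edge hypothesis entering only in the second.

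For the forward inclusion, I would show that $f_e\cdot g \in J_G$ for every generator $g = x_ky_l - x_ly_k$ of $J_{G_e}$. If $\{k,l\} \in E(G)$ this is immediate. Otherwise, by the construction of $G_e$, $\{k,l\}$ lies inside $N_G(i)$ or inside $N_G(j)$; assume the former, so $\{i,k\},\{i,l\}\in E(G)$. The Pl\"ucker-type identity
\[
x_i(x_ky_l - x_ly_k) \;=\; x_k(x_iy_l - x_ly_i) \;-\; x_l(x_iy_k - x_ky_i)
\]
shows $x_i g \in J_G$, and the analogous identity with the $x$'s and $y$'s interchanged shows $y_i g \in J_G$. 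Since $f_e \cdot g = y_j(x_i g) - x_j(y_i g)$, this places $f_e g$ in $J_G$.

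For the reverse inclusion, I would exploit the primary decomposition of binomial edge ideals: $J_G$ is radical, and its minimal primes have the explicit form $P_T(G) = \langle x_t,y_t : t\in T\rangle + \sum_c J_{K_{V(G_c)}}$ indexed by certain subsets $T \subseteq V(G)$, where the $G_c$ are the connected components of the induced subgraph $G[V(G)\setminus T]$ and $K_{V(G_c)}$ is the complete graph on $V(G_c)$. Radicality then yields
\[
J_G:\langle f_e\rangle \;=\; \bigcap_{T:\ f_e \notin P_T(G)} P_T(G),
\]
and direct inspection of $P_T(G)$ shows that $f_e \notin P_T(G)$ if and only if $i,j \notin T$ and $i,j$ lie in distinct connected components of $G[V(G)\setminus T]$.

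The cut-edge hypothesis on $e$ in $G\cup\{e\}$ says precisely that $i$ and $j$ already lie in different connected components of $G$, so the removal of any admissible vertex set $T$ keeps them separated. I would then verify that this collection of ``surviving'' primes $P_T(G)$ coincides with the collection of minimal primes of $J_{G_e}$ avoiding $f_e$, so that the intersection above equals the primary decomposition of $J_{G_e}$. I expect the main obstacle to be this last combinatorial step: one must show that the cliqueing operation on $N_G(i)$ and $N_G(j)$ that produces $G_e$ from $G$ does not change the relevant primary components, and this is exactly where the separation of $i$ and $j$ in $G$ is essential - the added cliques sit entirely inside one of the two components, so they are already absorbed by the corresponding $J_{K_{V(G_c)}}$ summand of $P_T(G)$.
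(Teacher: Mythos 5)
The paper itself offers no proof of this lemma; it is imported verbatim from Mohammadi and Sharifan, so there is no internal argument to compare yours against. Judged on its own, your plan is workable. The forward inclusion is complete as written: the Pl\"ucker identity and its $x\leftrightarrow y$ twin put $x_ig$ and $y_ig$ in $J_G$ for every new generator $g=x_ky_l-x_ly_k$ with $k,l\in N_G(i)$, and then $f_eg=y_j(x_ig)-x_j(y_ig)\in J_G$; no hypothesis on $e$ is needed. For the reverse inclusion, the skeleton is also right: radicality gives $J_G:\langle f_e\rangle=\bigcap_{f_e\notin P_T(G)}P_T(G)$, your criterion for $f_e\notin P_T(G)$ is correct, the cut-edge hypothesis reduces it to $i,j\notin T$, and for such $T$ one indeed has $P_T(G)=P_T(G_e)$ because the cliqued edges join vertices already lying in the component of $i$ (resp.\ $j$) of $G[V\setminus T]$.

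The one step your sketch leaves genuinely open is the last sentence: matching the surviving primes with ``the minimal primes of $J_{G_e}$ avoiding $f_e$'' only identifies your intersection with $J_{G_e}:\langle f_e\rangle$, not with $J_{G_e}$. To close the gap you must show that \emph{no} minimal prime of $J_{G_e}$ contains $f_e$, i.e., that every $P_S(G_e)$ with $i\in S$ or $j\in S$ is redundant in the decomposition of $J_{G_e}$. This does hold, but for a reason you have not stated: in $G_e$ the vertices $i$ and $j$ are simplicial (their neighbourhoods have been completed to cliques, and $N_{G_e}(i)=N_G(i)$ since $\{i,j\}\notin E(G)$), and a simplicial vertex is never a cut vertex of any induced subgraph containing it; hence no $S$ containing $i$ or $j$ has the cut-point property for $G_e$, equivalently $P_{S\setminus\{i\}}(G_e)\subseteq P_S(G_e)$. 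Once this is added, every minimal prime of $J_{G_e}$ has $i,j\notin S$, keeps $i$ and $j$ in different components, and therefore avoids $f_e$, so the two intersections coincide and the lemma follows. With that single supplement your argument is correct, though it leans on the full primary decomposition machinery for a statement that can also be proved by a direct generator computation.
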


We will require the next result about the
Betti polynomials of binomial edge ideals of  decomposable graphs. For an 
$\mathbb{N}$-graded $R$-module $M$, 
the {\it Betti polynomial of $M$} is 
$$B_M(s,t) = \sum_{i,j \geq 0} \beta_{i,j}(M)s^it^j.$$
The following result is due to Herzog and Rinaldo, which
generalized an earlier result of of Rinaldo and Rauf \cite{rauf_construction_2014}.
\begin{theorem}[{\cite[Proposition 3]{herzog_extremal_2018}}]\label{freevertexbetti}
    Suppose that  $G$ is a decomposable graph
    with decomposition $G = G_1\cup G_2$. Then
    \[B_{R/J_G}(s, t) = B_{R/J_{G_1}}(s, t)B_{R/J_{G_2}}(s, t).\]
\end{theorem}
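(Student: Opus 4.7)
The plan is to identify the minimal graded free resolution of $R/J_G$ with the tensor product over $R$ of the minimal graded free resolutions of $R/J_{G_1}R$ and $R/J_{G_2}R$, where $J_{G_i}R$ denotes the extension of $J_{G_i}$ from $R_i := k[x_j,y_j : j \in V(G_i)]$ to $R$. Once this identification is achieved, the product formula for the Betti polynomial falls out by simply reading off graded ranks in each tensor summand.

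With $\{v\} = V(G_1)\cap V(G_2)$ and $T = k[x_v,y_v]$, we have $R = R_1 \otimes_T R_2$ as a polynomial ring and $J_G = J_{G_1}R + J_{G_2}R$. Since $R$ is a free (hence flat) $R_i$-module via the polynomial extension by the variables indexed by $V(G)\setminus V(G_i)$, a minimal $R_i$-free resolution of $R_i/J_{G_i}$ tensors up to a minimal $R$-free resolution of $R/J_{G_i}R$ with identical graded Betti numbers; in particular $B_{R/J_{G_i}R}(s,t) = B_{R_i/J_{G_i}}(s,t)$. Let $F_\bullet$ and $H_\bullet$ be these minimal $R$-resolutions. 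The tensor complex $F_\bullet \otimes_R H_\bullet$ is a minimal complex of graded free $R$-modules whose differentials still have entries in the graded maximal ideal, and which satisfies
\begin{equation*}
H_0(F_\bullet \otimes_R H_\bullet) = R/J_G, \qquad H_i(F_\bullet \otimes_R H_\bullet) = \mathrm{Tor}^R_i(R/J_{G_1}R, R/J_{G_2}R) \text{ for } i \geq 1.
\end{equation*}
Its graded ranks coincide with the coefficients of $B_{R/J_{G_1}R}(s,t) \cdot B_{R/J_{G_2}R}(s,t)$. The theorem therefore reduces to the Tor-independence statement
\begin{equation*}
\mathrm{Tor}^R_i(R/J_{G_1}R, R/J_{G_2}R) = 0 \qquad \text{for all } i \geq 1.
\end{equation*}

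The crux, and principal obstacle, is this Tor-vanishing. A tempting shortcut is to argue that $R_i/J_{G_i}$ is flat over $T$, but this already fails for $G_i = K_2$: the sequence $x_v,y_v$ is not regular on $R_i/J_{G_i}$, so one sees $\mathrm{Tor}^T_1(R_i/J_{G_i},k) \neq 0$. My plan is instead to induct on $|E(G_1)|+|E(G_2)|$, using \Cref{lemma 3.8} together with the standard short exact sequence
\begin{equation*}
0 \longrightarrow R/\bigl(J_{H\setminus e}R : f_e\bigr)(-2) \xrightarrow{\,\cdot f_e\,} R/J_{H\setminus e}R \longrightarrow R/J_H R \longrightarrow 0
\end{equation*}
and its associated long exact $\mathrm{Tor}$ sequence against the other factor, to reduce each step to a Tor computation for strictly smaller binomial edge ideals. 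The free vertex hypothesis has to do real work here: it both selects which edges can be peeled off while maintaining the decomposable structure across $v$, and guarantees that the colon ideals produced by \Cref{lemma 3.8} remain of the form $J_{H'}R$ for a graph $H'$ that still decomposes along the free vertex $v$, so the inductive hypothesis applies. A secondary subtlety is that $G_i$ need not contain any cut edge at all (e.g.\ when $G_i = K_n$); these base cases can be treated directly, using \Cref{completebetti} and the fact that $R/J_{K_n}$ is a domain to verify that multiplication by generators of $J_{G_{3-i}}R$ is injective and hence that no higher Tor can appear.
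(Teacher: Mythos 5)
First, a point of comparison: the paper does not prove this statement at all --- it is quoted from \cite[Proposition 3]{herzog_extremal_2018} (which generalizes \cite{rauf_construction_2014}) --- so there is no internal proof to measure you against, and I am assessing your argument on its own terms. Your reduction is the correct skeleton, and it is the route the cited sources take: write $R = R_1\otimes_T R_2$ with $T = k[x_v,y_v]$, note that graded Betti numbers are preserved under the flat extensions $R_i\hookrightarrow R$, and reduce the product formula to the Tor-independence $\operatorname{Tor}^R_i(R/J_{G_1}R,\, R/J_{G_2}R)=0$ for $i\geq 1$ (equivalently $\operatorname{Tor}^T_i(R_1/J_{G_1},\,R_2/J_{G_2})=0$, which is automatic for $i\geq 3$ since $T$ has global dimension two). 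Your sanity check that $R_i/J_{G_i}$ is \emph{not} flat over $T$, already for $K_2$, is correct and shows you have located the genuine difficulty.

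The gap is that this Tor-vanishing --- which is the entire mathematical content of the theorem --- is left as a plan, and the plan as written would not go through. First, the induction peels off edges via \Cref{lemma 3.8}, but that lemma identifies $J_{H\setminus e}:\langle f_e\rangle$ with a binomial edge ideal only when $e$ is a cut edge; a graph with a free vertex need not have any cut edge, and the graphs with none are all $2$-edge-connected graphs (any block), not just complete graphs, so your ``base cases'' comprise essentially the whole problem. For a non-cut edge the colon ideal is described by the admissible-path formula of \cite{mohammadi_hilbert_2014} and is generally not of the form $J_{H'}$ for a graph $H'$. Second, even in the base case $G_1=K_n$, the argument ``$R/J_{K_n}$ is a domain, so multiplication by generators of $J_{G_2}R$ is injective, hence no higher Tor'' is not a Tor computation: injectivity of multiplication by a single element controls at most $\operatorname{Tor}_1$ against a principal ideal, whereas you need $\operatorname{Tor}^R_i(R/J_{K_n}R,\,R/J_{G_2}R)$ for $i=1,2$ against an ideal with many generators and higher syzygies. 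Third, you do not verify that the graphs produced at each inductive step ($G_1\setminus e$ and $(G_1\setminus e)_e$) remain decomposable along $v$ so that the inductive hypothesis applies. Until $\operatorname{Tor}^T_1=\operatorname{Tor}^T_2=0$ is actually established --- which is precisely what \cite{rauf_construction_2014} and \cite{herzog_extremal_2018} supply --- the proposal proves nothing beyond the mapping-cone inequality that is already available for any splitting.
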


The graded Betti numbers in the linear strand of $J_G$ (all
the Betti numbers of the form $\beta_{i,i+2}(J_G))$ were 
first calculated by Herzog, Kaini, and Saeedi Madani.
In the statement below, 
$\Delta(G)$ is the clique complex of the graph $G$ and $f_{i+1}(\Delta(G))$ is the number of faces in $\Delta(G)$ of dimension $i+1$.

\begin{theorem}[{\cite[Corollary 4.3]{herzog_linear_2017}}]\label{linearbinom}
    Let $G$ be a finite simple graph with binomial edge
    ideal $J_G$. Then the Betti numbers in the linear
    strand of $J_G$ are given by
   \[\beta_{i,i+2}(J_G) = (i+1)f_{i+1}(\Delta(G)) ~~\mbox{for $i\geq 0$.}\]
   \end{theorem}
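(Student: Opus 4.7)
The plan is to exploit the $\mathbb{N}^n$-multigrading of $R = k[x_1,\ldots,x_n,y_1,\ldots,y_n]$ in which $\deg x_k = \deg y_k = e_k$. Under this refinement, $J_G$ is multigraded with generators $f_{\{i,j\}} = x_iy_j-x_jy_i$ of squarefree multidegree $e_i+e_j$, and the graded Betti numbers decompose as $\beta_{i,j}(J_G) = \sum_{|\sigma|=j} \beta_{i,\sigma}(J_G)$, where $|\sigma|$ denotes total degree. The task becomes identifying which multidegrees contribute to the linear strand and evaluating each contribution.

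My first step is to show that in the linear strand $j=i+2$, only squarefree multidegrees $\sigma_S = \sum_{k\in S}e_k$ with $|S|=i+2$ contribute, giving
\[\beta_{i,i+2}(J_G) = \sum_{S\subseteq [n],\,|S|=i+2}\beta_{i,\sigma_S}(J_G).\]
The argument tracks multidegrees through the linear differentials of the minimal multigraded free resolution: at homological position $1$, each linear syzygy $\sum_e c_e f_e$ with $c_e$ a linear form has multidegree $e_i+e_j+e_k$ for some $k$, and a short case analysis rules out $k\in\{i,j\}$ since $x_i f_{\{i,j\}}$ and $y_i f_{\{i,j\}}$ are nonzero and so cannot be (parts of) minimal syzygies. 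An inductive extension through the resolution confirms that each relevant $\sigma$ is squarefree on exactly $i+2$ vertices.

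Next I would invoke the localization principle: only generators $f_e$ with both endpoints of $e$ in $S$ can contribute to multidegree $\sigma_S$, so the minimal multigraded resolution restricted to this multidegree coincides with that of $J_{G[S]}$, yielding $\beta_{i,\sigma_S}(J_G) = \beta_{i,\sigma_S}(J_{G[S]})$. This reduces the problem to computing the top squarefree Betti number of $J_H$ for a graph $H$ on $i+2$ vertices. When $H = K_{i+2}$, Theorem~\ref{completebetti} yields $\beta_{i,i+2}(J_H) = i+1$, and since $\sigma_{V(H)}$ is the unique squarefree multidegree of total degree $i+2$ supported on $V(H)$, this entire Betti number is concentrated there: $\beta_{i,\sigma_{V(H)}}(J_H)=i+1$. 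This case also has a clean conceptual explanation via the Eagon--Northcott resolution of the $2 \times (i+2)$ generic matrix, whose position-$i$ module has rank $(i+1)\binom{i+2}{i+2}=i+1$.

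The main obstacle is the vanishing step: if $H$ is a graph on $i+2$ vertices that is not complete, then $\beta_{i,\sigma_{V(H)}}(J_H)=0$. The intuition is that a non-trivial $i$-th syzygy at the top multidegree must be a Koszul-type relation that simultaneously uses every one of the $\binom{i+2}{2}$ potential edges among $V(H)$, so a missing edge $\{u,v\}$ breaks the relation because the generator $f_{\{u,v\}}$ is unavailable to cancel the requisite terms. A rigorous proof likely proceeds by an explicit syzygy or Koszul-complex computation, or by induction on the number of missing edges using deletion-contraction. Once this vanishing is secured, summing contributions yields
\[\beta_{i,i+2}(J_G) = \sum_{S\text{ a clique of size }i+2}(i+1) = (i+1)f_{i+1}(\Delta(G)),\]
as desired.
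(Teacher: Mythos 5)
First, note that the paper does not prove this statement at all: it is quoted verbatim from Herzog, Kiani, and Saeedi Madani \cite[Corollary 4.3]{herzog_linear_2017}, where the result is obtained by explicitly constructing the linear strand of the minimal free resolution of $J_G$ as a complex supported on the clique complex $\Delta(G)$ (a generalized Eagon--Northcott-type complex) and computing its ranks. So there is no in-paper argument to compare against, and your proposal must stand on its own as a proof.

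It does not. Your reduction strategy --- decompose $\beta_{i,i+2}(J_G)$ over squarefree $\mathbb{N}^n$-multidegrees $\sigma_S$, restrict to induced subgraphs via $\beta_{i,\sigma_S}(J_G)=\beta_{i,\sigma_S}(J_{G[S]})$, and evaluate the complete-graph case by Theorem \ref{completebetti} --- is a sensible outline, and the restriction step is fine since $(J_G)_\tau=(J_{G[S]})_\tau$ for all $\tau\leq\sigma_S$. But the entire content of the theorem is concentrated in the step you label ``the main obstacle'': that $\beta_{i,\sigma_{V(H)}}(J_H)=0$ when $H$ is a non-complete graph on $i+2$ vertices. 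For this you offer only the heuristic that a missing edge ``breaks'' a Koszul-type relation, together with the remark that a rigorous proof ``likely proceeds by an explicit syzygy or Koszul-complex computation, or by induction on the number of missing edges.'' That is a statement of intent, not an argument; no candidate induction or computation is actually set up, and it is precisely this vanishing that the cited paper works hard to establish. A second, smaller gap: your claim that only squarefree multidegrees of total degree $i+2$ contribute to the $i$-th linear Betti number is verified honestly only for first syzygies (where the divisibility argument forces the relation $\ell f_{ij}=0$ with $\ell$ a linear form in $x_i,y_i$, hence $\ell=0$); the ``inductive extension through the resolution'' for $i\geq 2$ is asserted without any mechanism, and squarefreeness of multidegrees in $\mathbb{N}^{2n}$ (which one can get from the squarefree initial ideal of $J_G$) does not descend to squarefreeness in the coarser $\mathbb{N}^n$-grading where $\deg x_v=\deg y_v=e_v$. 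As written, the proposal reduces the theorem to two unproven claims, one of which is the heart of the matter.
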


\begin{example}\label{runningexample}
Let $G$ be the finite simple graph on the vertex set $[7]$ with edge set
$$E(G) =\{\{1,2\}, \{1,3\}, \{1,4\}, \{1, 5\}, \{1,7\},\{2, 4\}), \{2,5\}, 
\{2,7\},\{3,7\},\{4,5\},\{6,7\}\}.$$
This graph is drawn in Figure \ref{fig:runningexamp}.     
    \begin{figure}[ht]
        \centering
\begin{tikzpicture}[every node/.style={circle, draw, fill=white!60, inner sep=2pt}, node distance=1.5cm]
    \node (1) at (1.5, 0) {1};
    \node (2) at (1.5, -1.5) {2};
    \node (3) at (3, 0) {3};
    \node (4) at (0, -1.5) {4};
    \node (5) at (0, 0) {5}; 
    \node (6) at (4.5, 0) {6};
    \node (7) at (3, -1.5) {7};

    \draw (1) -- (2);
    \draw (1) -- (3);
    \draw (1) -- (4);
    \draw (1) -- (5);
    \draw (1) -- (7);
    \draw (2) -- (4);
    \draw (2) -- (5);
    \draw (2) -- (7);
    \draw (3) -- (7);
    \draw (4) -- (5);
    \draw (6) -- (7);
\end{tikzpicture}
        \caption{Graph $G$}
        \label{fig:runningexamp}
    \end{figure}
    The binomial edge ideal of $G$ is an ideal of 
    $R=k[x_1,\ldots,x_7,y_1,\ldots,y_7]$ with 11 generators.  
    Specifically,
    \begin{multline*}
    J_G = \langle x_1y_2-x_2y_1, 
    x_1y_3-x_3y_1,
    x_1y_4-x_4y_1,
    x_1y_5-x_5y_1,
    x_1y_7-x_7y_1,
    x_2y_4-x_4y_2, \\
    x_2y_5-x_5y_2, 
    x_2y_7-x_7y_2, 
    x_3y_7-x_7y_3,
    x_4y_5-x_5y_4,
    x_6y_7-x_7x_6 \rangle.
    \end{multline*}
\end{example}


\section{Partial Betti splittings}

In this section, we define the notion of a partial Betti splitting, generalising the concept of a Betti splitting first established 
by Francisco, H\`a, and Van Tuyl \cite{francisco_splittings_2008}.  While a Betti
splitting of an ideal $I$ is a ``splitting" of $I$ into
two ideals $I = J+K$
such that {\it all} of the (multi)-graded Betti
numbers of $I$ can be related to those of $J, K$ and
$J \cap K$, in a partial Betti splitting, we only 
require that some of these relations to hold. 
Betti splittings of ideals were 
originally defined just for
monomial ideals, since the original motivation
of \cite{francisco_splittings_2008} was to extend 
Eliahou and Kevaire's  splitting of monomial ideals 
\cite{EK1990}.  However,
a careful examination of the proofs of \cite{francisco_splittings_2008} reveals that 
some of the main results hold for all (multi)-graded
ideals in a polynomial ring $R = k[x_1,\ldots,x_n]$.  We develop
partial Betti splittings within this more general context.

Assuming that $R$ is $M$-graded, let $I,J$, and $K$ be
homogeneous ideals with respect to this grading 
such that $I = J + K$ and
$\mathfrak{G}(I)$ is the disjoint union of $\mathfrak{G}(J)$
and $\mathfrak{G}(K)$.  We have a natural short exact
sequence
$$0 \longrightarrow J \cap K \stackrel{\varphi}{\longrightarrow} J \oplus K 
\stackrel{\psi}{\longrightarrow} I = J+K \longrightarrow 0,$$
where the maps $\varphi(f) = (f,-f)$ and $\psi(g,h) = g+h$ have degree $0$, i.e.,
they map elements of degree $j \in M$ to elements of degree
$j \in M$.
The mapping cone resolution applied to this
short exact sequence then implies that
$$\beta_{i,j}(I) \leq \beta_{i,j}(J) + \beta_{i,j}(K)
+ \beta_{i-1,j}(J \cap K) ~~\mbox{for all $i \geq 0$ and $j \in M$}.$$
We are then interested in when we have an equality.
The following lemma gives such a condition for
a specific $i \in \mathbb{N}$ and $j \in M$. The 
proof is essentially the same as \cite[Proposition 2.1]{francisco_splittings_2008} which considered only monomial ideals, 
but for completeness, we have included the details here.

\begin{lemma}\label{singlesplit}
    Let $R$ be a $M$-graded ring, and suppose that
    $I, J$, and $K$ are homogeneous ideals with respect to this
    grading such that $I = J+K$ and $\mathfrak{G}(I)$ is the disjoint
    union of $\mathfrak{G}(J)$ and $\mathfrak{G}(K)$.  
    Let $$0 \longrightarrow J \cap K \stackrel{\varphi}{\longrightarrow} J \oplus K 
\stackrel{\psi}{\longrightarrow} I = J+K \longrightarrow 0$$
be the natural short exact sequence.  Then, 
for a fixed integer $i > 0$ and $j \in M$, the following
two statements are equivalent:
\begin{enumerate}
    \item $\beta_{i,j}(I) = \beta_{i,j}(J)+\beta_{i,j}(K) +
    \beta_{i-1,j}(J\cap K)$;
    \item the two maps $$\varphi_i:{\rm Tor}_i(k,J \cap K)_j 
    \rightarrow {\rm Tor}_i(k,J)_j \oplus {\rm Tor}_i(k,K)_j$$ 
    and
    $$\varphi_{i-1}:{\rm Tor}_{i-1}(k,J \cap K)_j 
    \rightarrow {\rm Tor}_{i-1}(k,J)_j \oplus {\rm Tor}_{i-1}(k,K)_j$$
    induced from the long exact sequence of \emph{Tor} using the above
    short sequence are both the zero map. 
\end{enumerate}
\end{lemma}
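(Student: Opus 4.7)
The plan is to apply the functor ${\rm Tor}_\bullet(k,-)$ to the short exact sequence
$$0 \longrightarrow J \cap K \stackrel{\varphi}{\longrightarrow} J \oplus K \stackrel{\psi}{\longrightarrow} I \longrightarrow 0,$$
and then restrict to the graded piece of degree $j \in M$. Since $\varphi$ and $\psi$ are degree-preserving maps of $M$-graded modules, the resulting long exact sequence splits into degree-$j$ pieces, giving for each $i \geq 1$ a six-term portion
$$\cdots \to {\rm Tor}_i(k, J\cap K)_j \xrightarrow{\varphi_i} {\rm Tor}_i(k,J)_j \oplus {\rm Tor}_i(k,K)_j \xrightarrow{\psi_i} {\rm Tor}_i(k,I)_j \xrightarrow{\delta_i} {\rm Tor}_{i-1}(k,J\cap K)_j \xrightarrow{\varphi_{i-1}} {\rm Tor}_{i-1}(k,J)_j \oplus {\rm Tor}_{i-1}(k,K)_j \to \cdots$$
The $k$-dimensions of the outer modules are, by definition, $\beta_{i,j}(J\cap K)$, $\beta_{i,j}(J)+\beta_{i,j}(K)$, $\beta_{i,j}(I)$, $\beta_{i-1,j}(J\cap K)$, and $\beta_{i-1,j}(J)+\beta_{i-1,j}(K)$.

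Next I would perform a rank–nullity bookkeeping on this exact sequence. Using exactness at ${\rm Tor}_i(k,I)_j$ we get
$$\beta_{i,j}(I) = \dim \im\psi_i + \dim \im\delta_i,$$
then exactness at ${\rm Tor}_i(k,J)_j \oplus {\rm Tor}_i(k,K)_j$ gives
$$\dim \im\psi_i = \beta_{i,j}(J) + \beta_{i,j}(K) - \dim \im\varphi_i,$$
and exactness at ${\rm Tor}_{i-1}(k,J\cap K)_j$ gives
$$\dim \im\delta_i = \beta_{i-1,j}(J\cap K) - \dim \im\varphi_{i-1}.$$
Combining these three identities yields the master formula
$$\beta_{i,j}(I) = \beta_{i,j}(J) + \beta_{i,j}(K) + \beta_{i-1,j}(J\cap K) - \dim \im\varphi_i - \dim \im\varphi_{i-1}.$$

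From this identity the equivalence of (1) and (2) is immediate: since $\dim \im\varphi_i$ and $\dim \im\varphi_{i-1}$ are nonnegative integers, their sum is zero if and only if each vanishes, that is, if and only if both $\varphi_i$ and $\varphi_{i-1}$ are the zero map. There is no real obstacle here; the only thing worth being careful about is checking that the maps $\varphi, \psi$ are indeed homogeneous of degree $0$ with respect to the $M$-grading, so that the induced Tor maps respect the decomposition into degree-$j$ pieces and the long exact sequence restricts degree by degree. Once this is verified, the proof reduces to the dimension count above, which mirrors the argument of \cite[Proposition 2.1]{francisco_splittings_2008} but holds verbatim for arbitrary homogeneous ideals in any $M$-graded polynomial ring.
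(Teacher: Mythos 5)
Your proposal is correct and follows essentially the same route as the paper: both derive the identity $\beta_{i,j}(I) = \beta_{i,j}(J)+\beta_{i,j}(K)+\beta_{i-1,j}(J\cap K) - \dim_k \im(\varphi_i) - \dim_k \im(\varphi_{i-1})$ from the degree-$j$ piece of the long exact sequence of Tor and then observe that the two nonnegative correction terms vanish simultaneously exactly when both induced maps are zero. Your three-step rank--nullity bookkeeping is just a slightly more itemized version of the paper's alternating-sum computation on the truncated five-term exact sequence of vector spaces.
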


\begin{proof}  Fix an integer $i >0$ and $j \in M$.
    Using the short exact sequence given in the statement, we can
    use Tor to create a long exact sequence that satisfies
    \begin{multline*}
     \cdots \rightarrow 
    {\rm Tor}_i(k,J \cap K)_j 
    \stackrel{\varphi_i}{\rightarrow} 
    {\rm Tor}_i(k,J)_j \oplus {\rm Tor}_i(k,K)_j
    \rightarrow {\rm Tor}_i(k,I)_j \rightarrow \\
    {\rm Tor}_{i-1}(k,J \cap K)_j 
    \stackrel{\varphi_{i-1}}\rightarrow {\rm Tor}_{i-1}(k,J)_j \oplus {\rm Tor}_{i-1}(k,K)_j
    \rightarrow \cdots . 
    \end{multline*}
    Consequently, we have an exact sequence of vector spaces
     \begin{multline*}
    0 \rightarrow 
    {\rm Im}(\varphi_i)_j \rightarrow 
    {\rm Tor}_i(k,J)_j \oplus {\rm Tor}_i(k,K)_j
    \rightarrow {\rm Tor}_i(k,I)_j \rightarrow \\
    {\rm Tor}_{i-1}(k,J \cap K)_j 
    \stackrel{\varphi_{i-1}}\rightarrow A_j
    \rightarrow 0
    \end{multline*}
    where $$A  = {\rm Im}(\varphi_{i-1}) \cong {\rm Tor}(k,J \cap K)/{\ker \varphi_{i-1}}.$$
    We thus have 
    $$\beta_{i,j}(I) = \beta_{i,j}(J)+\beta_{i,j}(K) + \beta_{i-1,j}(J\cap K)
    - \dim_k ({\rm Im}(\varphi_i))_j - \dim_k ({\rm Im}(\varphi_{i-1}))_j.$$

    To prove $(1) \Rightarrow (2)$, note that
    if both $\varphi_i$ and $\varphi_{i-1}$ are the zero
    map, we have $\beta_{i,j}(I) = \beta_{i,j}(J) + \beta_{i,j}(K) + 
    \beta_{i-1,j}(J \cap K)$.  For $(2) \Rightarrow
    (1)$, if either of $\varphi_i$ or $\varphi_{i-1}$
    is not the zero map, either $\dim_k ({\rm Im}(\varphi_i))_j > 0$ or
    $\dim_k ({\rm Im}(\varphi_{i-1}))_j> 0$, which forces
    $\beta_{i,j}(I) <  \beta_{i,j}(J) + \beta_{i,j}(K) + 
    \beta_{i-1,j}(J \cap K).$
\end{proof}

The following corollary, which is \cite[Proposition 3]{francisco_splittings_2008},
immediately follows.

\begin{corollary}
    Let $R$ be a $M$-graded ring, and suppose that
    $I, J$, and $K$ are homogeneous ideals with respect to this
    grading such that $I = J+K$ and $\mathfrak{G}(I)$ is the disjoint
    union of $\mathfrak{G}(J)$ and $\mathfrak{G}(K)$.  
    Let $$0 \longrightarrow J \cap K \stackrel{\varphi}{\longrightarrow} J \oplus K 
\stackrel{\psi}{\longrightarrow} I = J+K \longrightarrow 0$$
be the natural short exact sequence.  Then
$\beta_{i,j}(I) = \beta_{i,j}(J)+\beta_{i,j}(K) +
    \beta_{i-1,j}(J\cap K)$ for all integers $i \geq 0$ and $j \in M$,
    if and only if 
    the maps $$\varphi_i:{\rm Tor}_i(k,J \cap K)_j 
    \rightarrow {\rm Tor}_i(k,J)_j \oplus {\rm Tor}_i(k,K)_j$$ 
    induced from the long exact sequence of {\rm Tor} using the 
    above short exact sequence 
    are the zero map for all integers $i \geq 0$ and $j \in M$.
\end{corollary}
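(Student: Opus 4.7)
The strategy is to reduce the corollary to a per-degree application of \Cref{singlesplit}. That lemma characterizes the equality at a fixed $(i,j)$ with $i>0$ as the simultaneous vanishing of the induced maps $\varphi_i$ and $\varphi_{i-1}$ in degree $j$, so both directions of the corollary will follow by quantifying this equivalence over all $i$ and $j$. The only subtlety is the boundary case $i=0$, which lies outside the hypothesis of \Cref{singlesplit} and must be addressed separately.

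For the forward direction, I would assume that $\beta_{i,j}(I) = \beta_{i,j}(J) + \beta_{i,j}(K) + \beta_{i-1,j}(J\cap K)$ holds for all $i\geq 0$ and $j\in M$. To show that $\varphi_i$ in degree $j$ vanishes for a prescribed $i\geq 0$ and $j\in M$, apply \Cref{singlesplit} at the index $(i+1,j)$, which is admissible because $i+1>0$. The equivalence in that lemma then forces both $\varphi_{i+1}$ and $\varphi_i$ in degree $j$ to be the zero map. Since $i\geq 0$ and $j\in M$ were arbitrary, this covers every $\varphi_i$ in every bi-degree.

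For the reverse direction, I would assume $\varphi_i=0$ in every bi-degree for every $i\geq 0$. For each pair $(i,j)$ with $i>0$, \Cref{singlesplit} immediately yields the desired equality, since both $\varphi_i$ and $\varphi_{i-1}$ vanish in degree $j$. For $i=0$, the convention $\beta_{-1,j}(J\cap K)=0$ reduces the equality to $\beta_{0,j}(I) = \beta_{0,j}(J) + \beta_{0,j}(K)$, which is the assertion that the number of minimal generators of $I$ of degree $j$ equals the sum of those for $J$ and $K$; this is immediate from the hypothesis that $\mathfrak{G}(I)$ is the disjoint union of $\mathfrak{G}(J)$ and $\mathfrak{G}(K)$.

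No step here is genuinely difficult: the corollary is essentially a global repackaging of \Cref{singlesplit}. The only minor obstacle is remembering to treat the $i=0$ edge case on its own, which requires only the elementary combinatorics of minimal generating sets rather than any further homological input.
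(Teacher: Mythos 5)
Your proof is correct and takes essentially the approach the paper intends: the paper gives no explicit argument, asserting the corollary "immediately follows" from \Cref{singlesplit}, and your write-up is the natural filling-in of that deduction (applying the lemma at $(i+1,j)$ to extract the vanishing of $\varphi_i$, and handling $i=0$ via the disjointness of $\mathfrak{G}(J)$ and $\mathfrak{G}(K)$ together with the convention $\beta_{-1,j}=0$). No gaps.
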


Applying  \Cref{singlesplit} directly implies that we would
need to understand the induced maps between {\rm Tor} modules
in order to determine if a specific $(i,j)$-th graded
Betti number of $I$ can be determined from those of $J$, $K$, and $J\cap K$.
However, we can now modify Theorem 2.3 from \cite{francisco_splittings_2008} to obtain a 
a specific ``splitting'' of $\beta_{i,j}(I)$ from
other graded Betti numbers.

\begin{theorem}\label{parcon}
  Let $R$ be a $M$-graded ring, and suppose that
    $I, J$, and $K$ are homogeneous ideals with respect to this
    grading such that $I = J+K$ and $\mathfrak{G}(I)$ is the disjoint
    union of $\mathfrak{G}(J)$ and $\mathfrak{G}(K)$.  
    Suppose for a fixed integer $i > 0$ and $j \in M$ we have that:
    \begin{itemize}
        \item  if $\beta_{i,j}(J\cap K)>0$, then $\beta_{i,j}(J) = 0$ and $\beta_{i,j}(K) = 0$, and
        \item if $\beta_{i-1,j}(J\cap K)>0$, then $\beta_{i-1,j}(J) = 0$ and $\beta_{i-1,j}(K) = 0.$
    \end{itemize} Then we have:
    \begin{equation}
        \beta_{i,j}(I) = \beta_{i,j}(J)+\beta_{i,j}(K)+\beta_{i-1, j}(J\cap K ).
    \end{equation}
\end{theorem}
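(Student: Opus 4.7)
The plan is to reduce the statement directly to \Cref{singlesplit}. That lemma already identifies the desired equality with the vanishing of the two connecting maps
\[
\varphi_i \colon \tor_i(k, J\cap K)_j \to \tor_i(k,J)_j \oplus \tor_i(k,K)_j
\quad\text{and}\quad
\varphi_{i-1} \colon \tor_{i-1}(k, J\cap K)_j \to \tor_{i-1}(k,J)_j \oplus \tor_{i-1}(k,K)_j,
\]
so the whole task is to show that both of these are zero maps under the hypotheses on Betti numbers.

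The key observation is purely linear-algebraic: a $k$-linear map between two $k$-vector spaces is forced to vanish as soon as either its domain or its codomain is $0$. I would handle $\varphi_i$ by a two-case split on the size of $\beta_{i,j}(J\cap K) = \dim_k \tor_i(k, J\cap K)_j$. If $\beta_{i,j}(J\cap K) = 0$, then the source of $\varphi_i$ is the zero vector space and $\varphi_i = 0$ automatically. If instead $\beta_{i,j}(J\cap K) > 0$, then the first bulleted hypothesis gives $\beta_{i,j}(J) = \beta_{i,j}(K) = 0$, so $\tor_i(k,J)_j \oplus \tor_i(k,K)_j = 0$ and $\varphi_i$ is again forced to be zero. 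The argument for $\varphi_{i-1}$ is identical, using the second bulleted hypothesis in place of the first.

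Once both $\varphi_i$ and $\varphi_{i-1}$ have been shown to be zero, \Cref{singlesplit} (the $(2) \Rightarrow (1)$ direction) immediately delivers the desired equality $\beta_{i,j}(I) = \beta_{i,j}(J) + \beta_{i,j}(K) + \beta_{i-1,j}(J\cap K)$.

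There is no real obstacle here; the theorem is essentially a book-keeping consequence of \Cref{singlesplit}. The only thing to be a little careful about is to remember that $\beta_{i,j}(L) = \dim_k \tor_i(k,L)_j$ for a homogeneous ideal $L$ in the $M$-graded setting, so that $\beta_{\bullet,j}$ vanishing really does mean the corresponding graded piece of Tor is the zero vector space and not merely small. No further input about the specific ideals $J$, $K$, or $I$, nor any properties of the mapping cone beyond what \Cref{singlesplit} already encodes, is needed.
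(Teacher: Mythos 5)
Your proposal is correct and follows essentially the same route as the paper: both reduce to \Cref{singlesplit} and show $\varphi_i$ and $\varphi_{i-1}$ vanish because either their source (when $\beta_{\bullet,j}(J\cap K)=0$) or their target (when the hypothesis forces $\beta_{\bullet,j}(J)=\beta_{\bullet,j}(K)=0$) is the zero vector space. The only cosmetic difference is that you treat the two maps independently with a two-case split each, whereas the paper organizes the same argument as a single four-case analysis.
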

\begin{proof}
     Since $I = J+K$, we have the short exact sequence
    \[0\longrightarrow J\cap K \xlongrightarrow{\varphi} J\oplus K \xlongrightarrow{\psi} J+K = I\longrightarrow 0.\]
    For all integers $\ell \geq 0$ and $j \in M$, we 
    get the induced maps 
    $$\varphi_\ell:{\rm Tor}_\ell(k,J \cap K)_j 
    \rightarrow {\rm Tor}_\ell(k,J)_j \oplus {\rm Tor}_\ell(k,K)_j$$ 
    from the long exact sequence of {\rm Tor} using the 
    short exact sequence.

    Let $i > 0$ and $j \in M$ be the fixed 
    $i$ and $j$  as in the statement.  There are four
    cases to consider: (1) $\beta_{i,j}(J \cap K)$
    and $\beta_{i-,j}(J \cap K)$ both non-zero,
    (2) $\beta_{i,j}(J\cap K) = 0$ and $\beta_{i-1,j}(J \cap K) > 0$,
    (3) $\beta_{i,j}(J\cap K) > 0$ and $\beta_{i-1,j}(J \cap K) = 0$,
    and (4) both $\beta_{i,j}(J\cap K)  = \beta_{i-1,j}(J \cap K) = 0$.
    
    In case (1), the maps
    $\varphi_i$ and $\varphi_{i-1}$ must be the zero map
    since $0 =\beta_{i,j}(J)$ and
    $0 = \beta_{i,j}(K)$ imply that 
    ${\rm Tor}_i(k,J)_j \oplus {\rm Tor}_i(k,K)_j = 0$,
    and similarly, $0 =\beta_{i-1,j}(J)$ and
    $0 = \beta_{i-1,j}(K)$ imply
    ${\rm Tor}_{i-i}(k,J)_j \oplus {\rm Tor}_{i-1}(k,K)_j = 0$.
    The conclusion now follows from \Cref{singlesplit}.

    For case (2), the map $\varphi_{i-1}$ is the zero map using
    the same argument as above.  On the other hand,
    $0 = \beta_{i,j}(J \cap K) = \dim_k {\rm Tor}(k, J\cap K)_j$ implies
    that $\varphi_i$ is the zero map.  We now apply
    \Cref{singlesplit}.

    Cases (3) and (4) are proved similarly, so we omit the details.
    \end{proof}

We now introduce the notion of a partial Betti splitting, that weakens
the conditions of a Betti splitting found in \cite{francisco_splittings_2008}.
Note that we assume that $R$ has the standard
$\mathbb{N}$-grading.

\begin{definition}\label{pardef}
    Let $I$, $J$ and $K$ be homogeneous ideals of $R$
    with respect to the standard $\mathbb{N}$-grading such that $\mathfrak{G}(I)$ is the disjoint union of $\mathfrak{G}(J)$ and $\mathfrak{G}(K)$. Then $I= J + K$ is an {\it $(r,s)$-Betti splitting} if
    \[\beta_{i,j}(I) = \beta_{i,j}(J)+\beta_{i,j}(K)+\beta_{i-1, j}(J\cap K )\text{\hspace{3mm} for all $(i,j)$ with $i\geq r$ or $j\geq i+s$}.\]
    If $(r,s) \neq (0,0)$ we call an
    $(r,s)$-Betti splitting $I=J+K$ a {\it partial
    Betti splitting}.
    Otherwise, we say that $I = J+K$ is a {\it complete Betti splitting}
    if it is a $(0,0)$-Betti splitting, that is,
    $$\beta_{i,j}(I) = \beta_{i,j}(J) + \beta_{i,,j}(K) + \beta_{i-1,j}(J\cap K)
    ~~\mbox{for all $i,j \geq 0$}.$$
\end{definition}

\begin{remark}
    A complete Betti splitting is what Francisco, H\`a, and Van Tuyl \cite{francisco_splittings_2008}
    called a Betti splitting.
\end{remark}

\begin{remark}
We can interpret the above definition with the Betti table of $I$. 
The {\it Betti table of $I$} is a table whose columns are indexed
by the integers $i\geq 0$, and in row $j$ and 
column $i$, we place $\beta_{i,i+j}(I)$.
If $I = J+K$ is an $(r,s)$-Betti splitting, then all the Betti numbers
in the Betti table of $I$ in
the $r$-th column and beyond or in the $s$-th row and beyond 
are ``split'',
that is, they satisfy
$\beta_{i,j}(I) = \beta_{i,j}(J)+\beta_{i,j}(K)+\beta_{i-1, j}(J\cap K ).$
\end{remark}

The following observation will be useful.

\begin{lemma}
    Suppose that $I=J+K$ is an $(r,s)$-Betti splitting of $I$.
    If $r = 0$ or $1$, then $I=J+K$ is a complete Betti splitting.
\end{lemma}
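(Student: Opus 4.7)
The plan is to verify the defining equality
\[\beta_{i,j}(I) = \beta_{i,j}(J) + \beta_{i,j}(K) + \beta_{i-1,j}(J\cap K)\]
for \emph{every} pair $(i,j)$ with $i \geq 0$, by splitting into the two cases $r=0$ and $r=1$ and checking that the only indices not covered by the hypothesis are trivial.

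First I would handle $r = 0$. In this case the $(0,s)$-splitting hypothesis already asserts that the equality holds whenever $i \geq 0$ or $j \geq i+s$. Since the condition $i \geq 0$ is vacuous for graded Betti numbers, every pair $(i,j)$ satisfies the hypothesis, and so by definition $I = J+K$ is a complete Betti splitting.

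Next I would treat $r = 1$. Here the hypothesis gives the equality for all $(i,j)$ with $i \geq 1$, so the only indices to verify are those with $i = 0$. For $i=0$ the identity reduces to
\[\beta_{0,j}(I) = \beta_{0,j}(J) + \beta_{0,j}(K) + \beta_{-1,j}(J \cap K).\]
The last term vanishes by the convention $\beta_{-1,j}(\,\cdot\,) = 0$ recorded in Section~2. The remaining identity $\beta_{0,j}(I) = \beta_{0,j}(J) + \beta_{0,j}(K)$ is immediate from the hypothesis that $\mathfrak{G}(I)$ is the disjoint union of $\mathfrak{G}(J)$ and $\mathfrak{G}(K)$, since $\beta_{0,j}(L)$ counts the number of elements of $\mathfrak{G}(L)$ of degree $j$ for any homogeneous ideal $L$. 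Combining the two cases, the equality holds for every $(i,j)$ with $i \geq 0$, which is precisely the definition of a complete Betti splitting.

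There is no real obstacle here; the lemma is essentially a book-keeping observation that the degenerate column $i = 0$ always splits automatically from the disjointness of the generating sets, so whenever the nontrivial column threshold $r$ drops to $1$ or below, no content is lost.
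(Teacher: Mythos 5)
Your proposal is correct and follows essentially the same route as the paper: the column $i=0$ splits automatically because $\mathfrak{G}(I)$ is the disjoint union of $\mathfrak{G}(J)$ and $\mathfrak{G}(K)$ and $\beta_{-1,j}(J\cap K)=0$ by convention, while all columns $i\geq 1$ are covered by the $(r,s)$-splitting hypothesis when $r\leq 1$. The explicit case split between $r=0$ and $r=1$ is a minor presentational difference only.
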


\begin{proof}
Since $I = J+K$ is an $(r,s)$-Betti splitting, we 
have $\mathfrak{G}(I) = \mathfrak{G}(J) \cup
\mathfrak{G}(K)$.  Consequently, we always have
$$\beta_{0,j}(I) = \beta_{0,j}(J) + \beta_{0,j}(K) +
\beta_{-1,j}(J\cap K) = \beta_{0,j}(J)+\beta_{0,j}(K) ~\mbox{for $i=0$ and all
$j \geq 0$.}$$
For any $(r,s)$-Betti splitting with $r =0$ or $1$, the definition implies
\[\beta_{i,j}(I) = \beta_{i,j}(J)+\beta_{i,j}(K)+\beta_{i-1, j}(J\cap K ) ~\mbox{for all $i > 0$ and all $j \geq 0$}.\]
So, for any $i,j \geq 0$, we have $\beta_{i,j}(I) = \beta_{i,j}(J) + 
\beta_{i,j}(K) + \beta_{i-1,j}(J \cap K)$, that is, we have
a complete Betti splitting.
\end{proof}

We can now use Theorem \ref{parcon} to get a condition on
$(r,s)$-Betti splittings.

\begin{theorem}\label{parcon2}
 Let $I$, $J$ and $K$ be homogeneous ideals of $R$
    with respect to the standard $\mathbb{N}$-grading such that $\mathfrak{G}(I)$ is the disjoint union of $\mathfrak{G}(J)$ and $\mathfrak{G}(K)$.  
    Suppose that there are integers $r$ and $s$ such that
    for all $i \geq r$ or $j \geq i+s$, $\beta_{i-1,j}(J \cap K) > 0$
    implies that $\beta_{i-1,j}(J) = 0$ and $\beta_{i-1,j}(K) = 0$.
    Then $I = J + K$ is an $(r,s)$-Betti splitting.
\end{theorem}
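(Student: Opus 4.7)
The strategy is to invoke \Cref{parcon} pointwise at each position $(i, j)$ in the range $\{i \geq r\} \cup \{j \geq i + s\}$. At such a fixed pair, \Cref{parcon} yields the splitting equality
\[\beta_{i,j}(I) = \beta_{i,j}(J) + \beta_{i,j}(K) + \beta_{i-1,j}(J \cap K)\]
once its two local vanishing hypotheses hold, namely
\begin{itemize}
\item[(a)] $\beta_{i,j}(J \cap K) > 0$ implies $\beta_{i,j}(J) = \beta_{i,j}(K) = 0$, and
\item[(b)] $\beta_{i-1,j}(J \cap K) > 0$ implies $\beta_{i-1,j}(J) = \beta_{i-1,j}(K) = 0$.
\end{itemize}

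Condition (b) is precisely the hypothesis of the theorem applied at the pair $(i, j)$ itself, which by assumption lies in the given range. For condition (a), the plan is to apply the hypothesis at the shifted pair $(i+1, j)$: at that pair, the hypothesis becomes a vanishing implication for $\beta_{(i+1)-1, j} = \beta_{i, j}$, which is exactly (a). It then suffices to check that $(i+1, j)$ still lies in the hypothesis range $\{i+1 \geq r\} \cup \{j \geq (i+1) + s\}$; if $i \geq r$ this is immediate since $i + 1 \geq r$, and if $j \geq i + s$ holds strictly then $j \geq (i+1) + s$ also holds.

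Once (a) and (b) are in hand at $(i, j)$, \Cref{parcon} produces the splitting equality there; ranging over all pairs $(i, j)$ in the region $\{i \geq r\} \cup \{j \geq i + s\}$ delivers the claimed $(r, s)$-Betti splitting. The main obstacle I anticipate is the row-boundary case $j = i + s$ with $i < r - 1$, where the shift $(i, j) \mapsto (i+1, j)$ falls just outside the hypothesis range. This edge case will need separate handling, perhaps by noting that the Betti numbers of $J \cap K$ automatically vanish there, or by returning to the Tor long exact sequence behind \Cref{singlesplit} and showing directly that the relevant map $\varphi_{i}$ is zero in degree $j$.
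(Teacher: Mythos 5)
Your proposal follows exactly the same route as the paper's proof: apply \Cref{parcon} at each pair $(i,j)$ in the region, obtaining your condition (b) from the hypothesis at $(i,j)$ itself and condition (a) from the hypothesis at the shifted pair $(i+1,j)$. The boundary case you flag --- $j = i+s$ with $i+1 < r$, where $(i+1,j)$ satisfies neither $i+1 \geq r$ nor $j \geq (i+1)+s$ --- is a genuine subtlety, and the paper does not actually resolve it: its proof simply asserts that ``$(i+1,j)$ also satisfies $i+1 \geq r$ or $j \geq i+s$,'' quietly using the membership condition for $(i,j)$ rather than the one for $(i+1,j)$. So you have not missed an idea that the paper supplies; if anything, you have isolated the one point at which the written proof is imprecise. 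To make the argument airtight one would either strengthen the hypothesis slightly (for instance, require the vanishing implication for both $\beta_{i-1,j}$ and $\beta_{i,j}$ at every $(i,j)$ in the region, or extend the region to $i \geq r-1$ along the row $j = i+s$) or handle the finitely many boundary pairs $(i,i+s)$ with $i \leq r-2$ directly via the Tor long exact sequence, as you suggest.
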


\begin{proof}
    Let $r$ and $s$ be as in the statement, and
    suppose that $(i,j)$ is fixed integer tuple that satisfies $i \geq r$ or $j \geq i+s$.  But then
    $(i+1,j)$ also satisfies $i+1 \geq r$ or 
    $j \geq i+s$.  Consequently, for this fixed
    $(i,j)$, the hypotheses imply
    \begin{enumerate}
        \item[$\bullet$]
    if $\beta_{i-1,j}(J\cap K) >0$, then $\beta_{i-1,j}(J) = \beta_{i-1,j}(K) = 0$, and
    \item[$\bullet$] if $\beta_{i,j}(J\cap K) > 0$, then $\beta_{i,j}(J) = \beta_{i,j}(K) = 0$.
    \end{enumerate}
    By Theorem \ref{parcon}, this now implies that
    $$\beta_{i,j}(I) = \beta_{i,j}(J)+\beta_{i,j}(K) + 
    \beta_{i-1,j}(J\cap K)$$
    for this fixed pair $(i,j)$.  But since
    this is true for all $(i,j)$ with either
    $i \geq r$ or $j \geq i+s$, this means
    $I=J+K$ is an $(r,s)$-Betti splitting.
\end{proof}

We end this section with consequences for the 
regularity and projective dimension of $I$ 
for a partial Betti splitting.  The
case for a complete Betti splitting
was first shown in \cite[Corollary 2.2]{francisco_splittings_2008}.

\begin{theorem}\label{regprojbounds}
      Suppose that $I=J+K$ is an $(r,s)$-Betti 
      splitting of $I$.  Set
      \begin{eqnarray*}
       m &= &\max\{ {\rm reg}(J), {\rm reg}(K),
      {\rm reg}(J\cap K)-1\}, ~~\mbox{and} \\
      p &=& 
      \max\{ {\rm pd}(I), {\rm pd}(J), {\rm pd}(J\cap K)+1\}.
      \end{eqnarray*}
       Then
      \begin{enumerate}
          \item if $m \geq s$, then 
          ${\rm reg}(I) = m$.
          \item if $p \geq r$, then ${\rm pd}(I) = p$.
        \end{enumerate}
          \end{theorem}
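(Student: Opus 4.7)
The plan is to establish $\text{reg}(I) = m$ and $\text{pd}(I) = p$ by proving matching upper and lower bounds, where the upper bounds come from the mapping cone and the lower bounds come from the equality part of the partial Betti splitting formula.

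For the upper bounds, I would apply the general mapping cone inequality \Cref{bettisplit}, which holds for \emph{all} $(i,j)$ regardless of any Betti splitting assumption:
\[
\beta_{i,j}(I) \leq \beta_{i,j}(J) + \beta_{i,j}(K) + \beta_{i-1,j}(J\cap K).
\]
If $\beta_{i,j}(I) > 0$, then at least one term on the right is positive, forcing $j - i$ to be at most $\max\{\text{reg}(J), \text{reg}(K), \text{reg}(J\cap K) - 1\} = m$ (where the shift by $-1$ accounts for $\beta_{i-1,j}(J\cap K)$ contributing to position $(i,j)$ of $I$). Similarly $i \leq \max\{\text{pd}(J), \text{pd}(K), \text{pd}(J\cap K)+1\}$, which is bounded by $p$. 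This yields $\text{reg}(I) \leq m$ and $\text{pd}(I) \leq p$.

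For the regularity lower bound, suppose $m \geq s$ and assume (say) $m = \text{reg}(J)$; the argument is symmetric for $\text{reg}(K)$, and needs a small index shift for $\text{reg}(J \cap K) - 1$. Choose $(i,j)$ with $j - i = m$ and $\beta_{i,j}(J) > 0$. Because $j - i = m \geq s$, the pair $(i,j)$ lies in the region $\{j \geq i + s\}$ where the $(r,s)$-Betti splitting formula is an equality. Hence
\[
\beta_{i,j}(I) = \beta_{i,j}(J) + \beta_{i,j}(K) + \beta_{i-1,j}(J\cap K) \geq \beta_{i,j}(J) > 0,
\]
so $\text{reg}(I) \geq j - i = m$. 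In the $\text{reg}(J\cap K) - 1 = m$ case, pick $(i',j)$ with $\beta_{i',j}(J\cap K) > 0$ and $j - i' = m+1$; set $i = i'+1$ so that $j - i = m \geq s$, and then $\beta_{i,j}(I) \geq \beta_{i-1,j}(J\cap K) > 0$ by the same equality.

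For the projective dimension lower bound, suppose $p \geq r$. If $p = \text{pd}(I)$ there is nothing to prove. If $p = \text{pd}(J)$, pick $j$ with $\beta_{p,j}(J) > 0$; since $p \geq r$, the pair $(p, j)$ lies in the region $\{i \geq r\}$ of the splitting, and the same equality argument gives $\beta_{p,j}(I) \geq \beta_{p,j}(J) > 0$, hence $\text{pd}(I) \geq p$. If $p = \text{pd}(J\cap K) + 1$, pick $j$ with $\beta_{p-1,j}(J\cap K) > 0$; then again $p \geq r$ puts $(p,j)$ in the splitting region and we get $\beta_{p,j}(I) \geq \beta_{p-1,j}(J\cap K) > 0$. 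Combining with the upper bound yields $\text{pd}(I) = p$.

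There is no real technical obstacle here: the heart of the matter is simply that the hypotheses $m \geq s$ and $p \geq r$ are exactly what is needed to force the index $(i,j)$ that witnesses the maximum to land in the region where the $(r,s)$-splitting promises equality rather than mere inequality. The only minor care required is in the $J \cap K$ cases, where one must shift the homological index by one so that a syzygy of $J \cap K$ in position $(i-1,j)$ is detected as a syzygy of $I$ in position $(i,j)$, and then verify that this shifted pair still satisfies $j \geq i + s$ or $i \geq r$.
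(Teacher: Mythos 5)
Your proposal is correct and follows essentially the same route as the paper: the upper bounds come from the mapping cone applied to $0 \to J\cap K \to J\oplus K \to I \to 0$, and the lower bounds come from observing that the hypotheses $m\geq s$ and $p\geq r$ place the witnessing pair $(i,j)$ (with the homological shift by one in the $J\cap K$ cases) inside the region where the $(r,s)$-splitting gives equality, so a nonzero term on the right forces $\beta_{i,j}(I)>0$. The only cosmetic difference is that the paper phrases the lower bound uniformly over the whole row $j-i=m$ (resp.\ column $i=p$) rather than splitting into cases according to which term achieves the maximum.
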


\begin{proof}
    By applying the mapping cone construction to the
    the short exact sequence 
    $$0 \longrightarrow J \cap K \longrightarrow J \oplus K
    \longrightarrow J+K = I \longrightarrow 0,$$
    we always have
    ${\rm reg}(I) \leq m$ and ${\rm pd}(I) \leq p$.

    Since $m \geq s$, this means for all $i \geq 0$ 
    $$\beta_{i,i+m}(I)=\beta_{i,i+m}(J)+\beta_{i,i+m}(K)
    +\beta_{i-1,i+m}(J\cap K)$$
    because we have an $(r,s)$-Betti splitting.
    By our definition of $m$, there is an integer $i$
    such that at least one of the 
    three terms on the right hand side must be nonzero.  This then forces ${\rm reg}(I) \geq m$, thus
    completing the proof that ${\rm reg}(I) = m$.

    Similarly, since $p \geq r$, for all $j \geq 0$
    we have 
    $$\beta_{p,j}(I) = \beta_{p,j}(J)+\beta_{p,j}(K)
    +\beta_{p-1,j}(J\cap K).$$
    By our definition of $p$, there is at least one $j$
    such that one of the terms on the right hand side
    is nonzero, thus showing ${\rm pd}(I) \geq p$.
    Consequently, ${\rm pd}(I) = p$.
\end{proof}

\begin{example}\label{runningexample2}
We illustrate a partial Betti splitting using the 
binomial edge ideal $J_G$ of \Cref{runningexample}.
We ``split'' $J_G$ as $J_G = J + K$ where
\begin{eqnarray*}
    J & = & \langle x_1y_2-x_2y_1, 
    x_1y_3-x_3y_1,
    x_1y_4-x_4y_1,
    x_1y_5-x_5y_1,
    x_1y_7-x_7y_1 \rangle ~~\mbox{and}\\
    K& = & \langle
    x_2y_4-x_4y_2,
    x_2y_5-x_5y_2, 
    x_2y_7-x_7y_2, 
    x_3y_7-x_7y_3,
    x_4y_5-x_5y_4,
    x_6y_7-x_7x_6 \rangle.
\end{eqnarray*}
We compute the graded Betti tables use in \emph{Macaulay2} \cite{mtwo}.
The graded Betti tables of $J$, $K$ and $J \cap K$ are given below.
\footnotesize
\begin{verbatim}
       0  1  2  3 4            0  1  2  3 4             0  1  2  3  4 5
total: 5 20 30 18 4     total: 6 15 20 14 4     total: 15 47 73 62 26 4
    2: 5  .  .  . .         2: 6  2  .  . .         2:  .  .  .  .  . .
    3: . 20 30 18 4         3: . 13  8  . .         3: 10  9  2  .  . .
    4: .  .  .  . .         4: .  . 12 14 4         4:  5 26 21  4  . .
    5: .  .  .  . .         5: .  .  .  . .         5:  . 12 50 58 26 4

    Betti Table J              Betti Table K         Betti Table J intersect K

\end{verbatim}
\normalsize
We compare this to the Betti table of $J_G$:
\footnotesize
\begin{verbatim}
        0  1  2   3  4  5 6  
total: 11 44 89 103 70 26 4
    2: 11 12  3   .  .  . .  
    3:  . 32 62  39  8  . .   
    4:  .  . 24  64 62 26 4

    Betti Table J_G
\end{verbatim}
\normalsize
Then $J_G = J+K$ is {\it not} a complete Betti splitting
since 
$$\beta_{2,4}(J_G) = 3 \neq 
0+ 0+ 9 =\beta_{2,4}(J) + \beta_{2,4}(K) + \beta_{1,4}(
J\cap K).$$
However, this is an example of a $(4,4)$-Betti splitting
since
$$\beta_{i,j}(J_G) =
\beta_{i,j}(J) + \beta_{i,j}(K) +
\beta_{i-1,j}(J\cap K) 
~~\mbox{for all $i \geq 4$ and $j \geq i+4$.}$$
\end{example}

\section{Betti splittings of binomial edge ideals: cut edge case}

In this section and the next, we wish to 
understand when a binomial edge ideal $J_G$ has a (partial) Betti splitting.
A natural candidate to consider is 
when $G_1$ 
is a single edge $e = \{u,v\}$ of $G$ and $G_2 = G\setminus e$.  
More formally, if $f_e = x_uy_v-x_vy_u$ is the binomial associated
to $e$, we wish to understand when
$$J_G = \langle f_e \rangle + J_{G\setminus e}$$
is either a partial or a complete Betti splitting of $J_G$.  As we
show in this section, with some extra hypotheses on $e$, this splitting of 
$J_G$ does indeed 
give a complete Betti splitting.

Since Betti splittings require information 
about the intersection of the two ideals used in the splitting, 
the following lemma shall prove useful.

\begin{lemma}\label{lemma 2.18}
Let $G = (V(G),E(G))$ be a simple graph with $e \in E(G)$. 
Then, using the standard grading of $R$, we have
a graded $R$-module isomorphism
$$[J_{G\setminus e} \cap \langle f_e \rangle]  \cong 
[J_{G\setminus e}: \langle f_e \rangle](-2).$$ 
Furthermore, if $e$ is a cut edge, then 
$$
\beta_{i,j}(J_{(G\setminus e)}\cap \langle f_e\rangle) =
\beta_{i,j-2}(J_{(G\setminus e)_e}) 
~\mbox{for all $i \geq 0$}.$$
\end{lemma}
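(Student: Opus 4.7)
The plan is to handle the two statements in order, treating the first (the graded isomorphism) as a purely ring-theoretic identity and the second (the Betti number equality) as a combination of the first with the already-stated cut-edge lemma.

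For the isomorphism, the strategy is the standard identity that, for any ideal $I$ in an integral domain and any nonzero element $f$, one has $\langle f\rangle \cap I = f\cdot (I:\langle f\rangle)$. Concretely, I would define the $R$-module map
\[\mu_{f_e}\colon (J_{G\setminus e}:\langle f_e\rangle) \longrightarrow J_{G\setminus e}\cap \langle f_e\rangle, \qquad h\longmapsto h\cdot f_e.\]
Well-definedness is immediate: if $h f_e \in J_{G\setminus e}$ then certainly $h f_e \in J_{G\setminus e}\cap \langle f_e\rangle$. Surjectivity is the content of the containment $\langle f_e\rangle \cap J_{G\setminus e}\subseteq f_e\cdot (J_{G\setminus e}:\langle f_e\rangle)$: any element of the intersection can be written as $h f_e$ with $h f_e\in J_{G\setminus e}$, forcing $h\in (J_{G\setminus e}:\langle f_e\rangle)$. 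Injectivity uses that $R = k[x_1,\dots,x_n,y_1,\dots,y_n]$ is a domain and $f_e\neq 0$, so multiplication by $f_e$ has trivial kernel. Since $f_e$ is homogeneous of degree $2$, the map $\mu_{f_e}$ raises degrees by $2$, hence gives a graded isomorphism $(J_{G\setminus e}:\langle f_e\rangle)(-2)\cong J_{G\setminus e}\cap \langle f_e\rangle$, which is exactly the stated isomorphism after rearranging.

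For the furthermore clause, I would apply the already quoted \Cref{lemma 3.8} of Mohammadi--Rauf to the graph $G\setminus e$. By hypothesis, $e$ is a cut edge of $G = (G\setminus e)\cup\{e\}$ and $e\notin E(G\setminus e)$, so the hypotheses of that lemma are met with $G\setminus e$ playing the role of the graph there. Consequently $J_{G\setminus e}:\langle f_e\rangle = J_{(G\setminus e)_e}$. Substituting into the isomorphism from the first part yields the graded isomorphism
\[J_{G\setminus e}\cap \langle f_e\rangle \cong J_{(G\setminus e)_e}(-2).\]
Taking graded Betti numbers and using $\beta_{i,j}(M(-2)) = \beta_{i,j-2}(M)$ then gives the claimed equality $\beta_{i,j}(J_{G\setminus e}\cap\langle f_e\rangle) = \beta_{i,j-2}(J_{(G\setminus e)_e})$ for all $i\geq 0$.

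There is essentially no substantive obstacle here: once the isomorphism $\langle f\rangle\cap I\cong (I:\langle f\rangle)$ is set up with the correct degree shift, everything else is bookkeeping plus a direct citation. The only point requiring mild care is making sure the cut-edge hypothesis is invoked on the right graph (namely $G\setminus e$, where $e$ is not an edge but becomes a cut edge upon re-addition), and tracking the degree $2$ shift consistently when passing from the module isomorphism to the Betti-number identity.
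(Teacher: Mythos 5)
Your proposal is correct and follows essentially the same route as the paper: the multiplication-by-$f_e$ map gives the degree-$2$ graded isomorphism with $J_{G\setminus e}:\langle f_e\rangle$, and the cut-edge case then follows by substituting $J_{G\setminus e}:\langle f_e\rangle = J_{(G\setminus e)_e}$ from \Cref{lemma 3.8} and shifting degrees. The only cosmetic difference is that the paper separately verifies the degenerate degrees $j=0,1$, which your formulation via the graded isomorphism and the convention for Betti numbers in negative degrees handles automatically.
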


\begin{proof}
 By definition of quotient ideals, we have that 
 $J_{G\setminus e}: \langle f_e \rangle \xrightarrow{\cdot f_e} J_{(G\symbol{92} e)}\cap \langle f_e\rangle$ is an $R$-module
 isomorphism of degree two.
 This fact implies the first statement.

Now suppose that $e$ is a cut edge. From \Cref{lemma 3.8} we have that $J_{(G\setminus e)_e} = J_{G\setminus e}: \langle f_e \rangle$.
Using this fact and the above isomorphisms of modules, we have
    $$ \tor_i(J_{(G\setminus e)_e},k)_{j-2} = 
    \tor_{i}(J_{G\setminus e}:\langle f_e \rangle, k)_{j-2} \cong
    \tor_{i}(J_{G\setminus e}\cap \langle f_e\rangle, k)_j.
    $$
    This isomorphism imples that 
    $\beta_{i,j}(J_{(G\setminus e)}\cap \langle f_e\rangle) =
    \beta_{i,j-2}(J_{(G\setminus e)_e})$ for all $i \geq 0$ for
    $j \geq 2$.  Now, for any $i \geq 0$ and $j=0$,
    $\beta_{i,0}(J_{(G\setminus e)}\cap \langle f_e\rangle) =
    \beta_{i,0-2}(J_{(G\setminus e)_e}) =0$.  Finally,
    because $J_{(G\setminus e)_e} = 
    J_{G \setminus e} : \langle f_e \rangle$ is generated by
    degree two binomials, then
    $J_{G\setminus e} \cap \langle f_e \rangle$ is generated by degree four
    elements. Thus $\beta_{i,1}(J_{(G\setminus e)}\cap \langle f_e\rangle) =
    \beta_{i,1-2}(J_{(G\setminus e)_e}) =0$ for all $i \geq 0$ and
    $j =1$.
    \end{proof}

With the above lemma, we can study splittings where $e = \{u,v\}$ when
$v$ is a pendant vertex, that is, $\deg v = 1$.

\begin{theorem}\label{maintheo}
    Let $e = \{u,v\} \in E(G)$ with $v$ a pendant vertex. Then 
    \begin{enumerate}
        \item $J_G = J_{G\setminus e}+\langle f_e\rangle$ is a complete Betti splitting, and  
        \item $\beta_{i,j}(J_G) = \beta_{i,j}(J_{G\symbol{92}e}) + \beta_{i-1,j-2}(J_{(G\setminus e)_e})$ for all $i\geq 1$ and
        $j \geq 0$.
    \end{enumerate}
\end{theorem}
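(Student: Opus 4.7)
The plan is to establish the splitting by working in the $\mathbb{N}^n$-multigrading on $R$ with $\deg x_i = \deg y_i = e_i$, verifying the hypotheses of \Cref{parcon} (applied in this multigrading) directly via multidegree bookkeeping, and then summing over multidegrees of total weight $j$ to recover the standard graded statement. The leverage for the whole argument is the pendant hypothesis, which guarantees that the variables $x_v$ and $y_v$ appear only in the generator $f_e$ and nowhere else in the splitting.

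First, I would observe that because $v$ is pendant with $e = \{u, v\}$, the vertex $v$ is isolated in $G \setminus e$; and since $N_{G \setminus e}(v) = \emptyset$, no new edges at $v$ are introduced in the closure $(G\setminus e)_e$ either, so $v$ remains isolated there as well. Consequently the variables $x_v$ and $y_v$ do not occur in $J_{G\setminus e}$ nor in $J_{(G\setminus e)_e}$, and the multigraded Betti numbers of both ideals are supported on multidegrees $\alpha \in \mathbb{N}^n$ with $\alpha_v = 0$. A pendant edge is always a cut edge, so \Cref{lemma 2.18} applies and yields a multigraded isomorphism $J_{G\setminus e} \cap \langle f_e\rangle \cong J_{(G\setminus e)_e}(-(e_u + e_v))$. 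This shift by $e_u + e_v$ tells me two things at once: every nonzero multigraded Betti number of the intersection occurs in a multidegree whose $v$-component is at least $1$, and the intersection is generated in total degree $4$.

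With these observations in hand, I would apply \Cref{parcon} one pair $(i, \alpha)$ at a time. If $\beta_{i, \alpha}(J_{G\setminus e} \cap \langle f_e \rangle) > 0$ then $\alpha_v \geq 1$, which forces $\beta_{i, \alpha}(J_{G\setminus e}) = 0$ automatically. For the principal ideal, the only nonvanishing multigraded Betti number is $\beta_{0, e_u + e_v}(\langle f_e\rangle) = 1$, and the matching entry $\beta_{0, e_u + e_v}(J_{G\setminus e} \cap \langle f_e\rangle)$ is zero since the intersection has its generators in total degree $4$. Hence both hypotheses of \Cref{parcon} hold in every multidegree, giving
\begin{equation*}
\beta_{i, \alpha}(J_G) = \beta_{i, \alpha}(J_{G\setminus e}) + \beta_{i, \alpha}(\langle f_e\rangle) + \beta_{i-1, \alpha}(J_{G\setminus e} \cap \langle f_e\rangle).
\end{equation*}
Summing over all $\alpha$ with $|\alpha| = j$ produces the standard-graded identity for every pair $(i, j)$, which is statement (1). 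Statement (2) then drops out: for $i \geq 1$ the term $\beta_{i, j}(\langle f_e\rangle)$ vanishes, and \Cref{lemma 2.18} (cut-edge case) identifies $\beta_{i-1, j}(J_{G\setminus e} \cap \langle f_e\rangle)$ with $\beta_{i-1, j-2}(J_{(G\setminus e)_e})$.

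The only mildly delicate point is confirming that \Cref{parcon} applies verbatim in the $\mathbb{N}^n$-multigrading and that the passage from the multigraded identity to the standard-graded one is legitimate; both are built into the general $M$-graded framework developed in Section 3 and amount to additivity of Betti numbers under coarsening the grading. Beyond that, the argument is essentially forced once one notices the clean separation of multidegree supports provided by the pendant hypothesis.
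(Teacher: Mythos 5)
Your proposal is correct and follows essentially the same route as the paper: pass to the $\mathbb{N}^n$-multigrading, use the pendant hypothesis to show the multigraded Betti numbers of $J_{G\setminus e}\cap\langle f_e\rangle$ live in multidegrees with nonzero $v$-component while those of $J_{G\setminus e}$ do not, handle $\langle f_e\rangle$ via its single Betti number in degree $2$ against the intersection being generated in higher degree, apply \Cref{parcon} multidegree by multidegree, and sum to recover the standard grading, with (2) then following from \Cref{lemma 2.18}. The only cosmetic difference is that you extract the multidegree support of the intersection from a multigraded refinement of \Cref{lemma 2.18}, whereas the paper gets it directly from the containment $J_{G\setminus e}\cap\langle f_e\rangle\subseteq\langle f_e\rangle$.
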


\begin{proof}
(1). Let $J_G = \langle f_e\rangle+J_{G\setminus e} \subseteq
R = k[x_1,\ldots,x_n,y_1,\ldots,y_n]$. 
We  consider the $\mathbb{N}^n$-grading on $R$  given by
$\deg x_i = \deg y_i = e_i$, the $i$-th standard basis vector
of $\mathbb{N}^n$.  Note that $J_G$ is a homogeneous ideal with
respect to this grading.

Since $\langle f_e\rangle\cap J_{G\setminus e}\subseteq \langle f_e \rangle$, 
all generators of $\langle f_e\rangle\cap J_{G\setminus e}$ are of the form 
$rf_e = r(x_uy_v-x_vy_u)$, where $r$ is some polynomial in $R$. Hence, the 
multidegree of the generators, and thus the multigraded Betti numbers of the 
ideal $\langle f_e\rangle\cap J_{G\setminus e}$ must occur with
multidegrees $\mathbf{a} = (a_1,\ldots,a_n)$ 
where its $v$-th component $a_v$ is non-zero.  Because
$v$ is a pendant vertex,
$J_{G\setminus e}$ contains no generators having $x_v$ or $y_v$.
Thus, 
$\beta_{i,{\bf a}}(J_{G\symbol{92}e}\cap \langle f_e \rangle )>0$ implies 
that $\beta_{i,{\bf a}}(J_{G \setminus e}) = 0$ for all $i\in \mathbb{N}$ and all multidegrees ${\bf a} \in \mathbb{N}^n$ as defined above.

We have that $\beta_{0,2}(\langle f_e\rangle) = 1$ and 
$\beta_{i,j}(\langle f_e\rangle) = 0$ for $i\neq 0$ and $j\neq 2$ as 
$\langle f_e\rangle$ is a principal ideal. Since 
$J_{G\symbol{92}e}\cap \langle f_e\rangle$ is generated by polynomials 
of degree three or more, this means that 
$\beta_{i,j}(J_{G\symbol{92}e}\cap \langle f_e\rangle)>0$ implies 
$\beta_{i,j}(\langle f_e \rangle) = 0$ 
for all $i\geq  0$ and degrees $j$. It is clear that since this is 
true for all degrees $j$,  this
result also holds for all ${\bf a} \in \mathbb{N}^n$ as well,
that is, 
if $\beta_{i,{\bf a}}(J_{G \setminus e} \cap \langle f_e \rangle) > 0$,
then $\beta_{i,{\bf a}}(\langle f_e \rangle) =0$ for all $i \geq 0$ and 
degrees ${\bf a} \in \mathbb{N}^n$.
    
Therefore \Cref{parcon} implies that 
$$\beta_{i,{\bf a}}(J_G) = \beta_{i,{\bf a}}(J_{G\setminus e})+
\beta_{i,{\bf a}}(\langle f_e \rangle) + 
\beta_{i-1,{\bf a}}(J_{G\setminus e} \cap \langle f_e \rangle)$$
for all $i \geq 0$ and ${\bf a} \in \mathbb{N}^n$.
Since this true for all multidegrees, we can combine them to obtain the same 
result with the degrees $j$ in the standard grading. Hence we have:
$$\beta_{i,j}(J_G) = \beta_{i,j}(\langle f_e\rangle)+
\beta_{i,j}(J_{G\symbol{92} e}) + 
\beta_{i-1,j}(J_{G\symbol{92} e}\cap \langle f_e\rangle) 
~\text{for all $i,j \geq 0$},$$
that is, $J_G = \langle f_e\rangle+J_{G\setminus e}$ is a complete Betti splitting.

An edge with a pendant vertex is a cut edge of $G$. So,
to prove (2), we can 
combine (1) and \Cref{lemma 2.18}
to give
$$\beta_{i,j}(J_G) =  \beta_{i,j}(\langle f_e\rangle)+\beta_{i,j}(J_{G\symbol{92} e}) + \beta_{i-1,j-2}(J_{(G\symbol{92} e)_e})$$ for all 
integers $i \geq 1$ and $j \geq 0$. 
On the other hand, 
$\beta_{i,j}(\langle f_e\rangle) = 0$ for $i\neq 0$ or $j\neq 2$. 
Hence, $\beta_{i,j}(J_G) = \beta_{i,j}(J_{G\symbol{92}e}) + \beta_{i-1,j-2}(J_{(G\symbol{92}e)_e})$ for all $i\geq 1$ and $j \geq 0$.
\end{proof}

In \Cref{maintheo}, we have proved that when there is a cut edge $e$ where one end is a pendant vertex, then removing $e$ induces a complete Betti splitting.  We can now use this result to 
derive complete Betti splittings for more general types of edges.

\begin{theorem}\label{singlefreevertex}
    Let $e = \{u,v\} \in E(G)$ be a cut-edge where $v$ is a free vertex in $G\setminus e$. Then
    \begin{enumerate}
        \item $J_G = J_{G\setminus e}+\langle f_e\rangle$ is a complete Betti splitting, and 
        \item $\beta_{i,j}(J_G) = \beta_{i,j}(J_{G\symbol{92}e}) + \beta_{i-1,j-2}(J_{(G\setminus e)_e})$ for all $i\geq 1$
        and $j \geq 0$.
    \end{enumerate}
\end{theorem}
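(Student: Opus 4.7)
My plan is to reduce the free-vertex case to the pendant-vertex case of \Cref{maintheo} by exhibiting both $G$ and $G\setminus e$ as decomposable graphs sharing a common summand at $v$. Since $e=\{u,v\}$ is a cut edge, the graph $G\setminus e$ is disconnected; let $G_1$ be the component containing $v$ and let $C_u$ be the induced subgraph on the union of the remaining components (so $u\in V(C_u)$). Define $G_2$ to be the induced subgraph of $G$ on $V(C_u)\cup\{v\}$ (which then contains the edge $e$) and set $G_2':=G_2\setminus e$. By hypothesis $v$ is free in $G_1$; in $G_2$ the vertex $v$ is a pendant, since its only neighbour is $u$; and in $G_2'$ the vertex $v$ is isolated, hence vacuously free. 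Consequently both $G=G_1\cup_v G_2$ and $G\setminus e=G_1\cup_v G_2'$ are decomposable.

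The next step is to apply \Cref{freevertexbetti} to both decompositions, combine with the pendant-case formula, and identify the whisker expansions. From \Cref{freevertexbetti} we get
\[
B_{R/J_G}(s,t) \;=\; B_{R/J_{G_1}}(s,t)\cdot B_{R/J_{G_2}}(s,t) \quad\text{and}\quad B_{R/J_{G\setminus e}}(s,t) \;=\; B_{R/J_{G_1}}(s,t)\cdot B_{R/J_{G_2'}}(s,t).
\]
Applying \Cref{maintheo} to $G_2$ at the pendant vertex $v$, together with \Cref{lemma 2.18}, yields $B_{R/J_{G_2}} - B_{R/J_{G_2'}} = st^2 + s^2t^2\,B_{J_{(G_2')_e}}$. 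Since $v$ is free in $G\setminus e$, the neighbourhood $N_{G\setminus e}(v)$ is already a clique, so the operation $(\,\cdot\,)_e$ acts nontrivially only by closing $N_{C_u}(u)$ into a clique; letting $C_u'$ denote the result, we have $(G_2')_e = \{v\}\sqcup C_u'$ and $(G\setminus e)_e = G_1\sqcup C_u'$. In particular $J_{(G_2')_e}=J_{C_u'}$, and the K\"unneth-type identity for disjoint-variable ideals (an immediate consequence of \Cref{freevertexbetti} after adjoining a fictitious isolated common vertex) gives $B_{R/J_{(G\setminus e)_e}} = B_{R/J_{G_1}}\cdot B_{R/J_{C_u'}}$.

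To conclude I would multiply the pendant-case difference by $B_{R/J_{G_1}}$ and simplify: a direct computation using $B_{R/J}=1+sB_J$ gives
\[
B_{R/J_{G_1}}\bigl(st^2 + s^2t^2\,B_{J_{C_u'}}\bigr) \;=\; st^2\cdot B_{R/J_{G_1}}\cdot B_{R/J_{C_u'}} \;=\; st^2\cdot B_{R/J_{(G\setminus e)_e}} \;=\; st^2 + s^2t^2\,B_{J_{(G\setminus e)_e}},
\]
so subtracting the two decomposable identities yields $B_{R/J_G} - B_{R/J_{G\setminus e}} = st^2 + s^2t^2\,B_{J_{(G\setminus e)_e}}$. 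This is exactly the Betti-splitting identity for $J_G = J_{G\setminus e}+\langle f_e\rangle$ at the level of Betti polynomials (using $B_{\langle f_e\rangle}=t^2$ and \Cref{lemma 2.18} to recognise $t^2\,B_{J_{(G\setminus e)_e}}$ as $B_{J_{G\setminus e}\cap\langle f_e\rangle}$). Because the mapping-cone inequality \eqref{bettisplit} holds termwise with non-negative integer coefficients, polynomial equality forces equality in every bidegree, proving~(1); statement~(2) follows immediately since $\beta_{i,j}(\langle f_e\rangle)=0$ for $i\geq 1$. The main obstacle is not conceptual but bookkeeping — tracking the four auxiliary graphs $G_1,G_2,G_2',C_u'$ and verifying the Betti-polynomial identities above — since the essential point is simply that the shared summand $G_1$ in the two decompositions forces the difference $B_{R/J_G}-B_{R/J_{G\setminus e}}$ to be governed entirely by the pendant case already handled in \Cref{maintheo}.
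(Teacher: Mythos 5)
Your proof is correct and follows essentially the same route as the paper: both arguments decompose the graph at the free vertex $v$, invoke \Cref{freevertexbetti} for the product of Betti polynomials, reduce to the pendant-vertex case of \Cref{maintheo}, and identify $(G\setminus e)_e$ as the disjoint union of the component of $v$ with the clique-completed component of $u$. The only difference is presentational — you run the bookkeeping through Betti polynomials of both $G$ and $G\setminus e$ and subtract, whereas the paper expands and regroups the convolution sums explicitly.
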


\begin{proof}
First note that if we can prove $(2)$, then $(1)$ will follow.
To see why, it is immediate that $\beta_{0,j}(J_G) = 
\beta_{0,j}(J_{G\setminus e}) + \beta_{0,j}(\langle f_e \rangle)
+\beta_{-1,j}(J_{G\setminus e} \cap \langle f_e \rangle)$ for 
all $j \geq 0$.  If $i \geq 1$, then 
\begin{eqnarray*}
\beta_{i,j}(J_G) &=& \beta_{i,j}(J_{G\symbol{92}e}) + \beta_{i-1,j-2}(J_{(G\setminus e)_e}) \\
& = & \beta_{i,j}(J_{G\setminus e}) + \beta_{i,j}(\langle f_e \rangle)
+ \beta_{i-1,j}(J_{G \setminus e} \cap \langle f_e \rangle)
\end{eqnarray*}
where we are using \Cref{lemma 2.18} and the fact that
$\beta_{i,j}(\langle f_e \rangle) = 0$ for all $i \geq 1$.

Now note that to prove to $(2)$, we can pass to quotient rings 
and prove that 
$$\beta_{i,j}(R/J_G) = \beta_{i,j}(R/J_{G\setminus e}) 
+ \beta_{i-1,j-2}(R/J_{(G\setminus e)_e} )
 ~~\mbox{for all $i \geq 2$ and $j \geq 0$}.$$

Let $G$ be a connected graph with cut-edge $e = \{u,v\}$. 
Let $G_1$ and $G_2$ be the connected components of $G\setminus e$,
and suppose $u\in V(G_1)$ and $v\in V(G_2)$. 
By our hypotheses, the vertex $v$ is a free vertex in $G_2$. 
Hence, we can see that $G$ is a decomposable graph, with 
decomposition $G = (G_1\cup \{e\}) \cup_v G_2$ 
(since pendant vertices are free vertices and $v$ is a pendant vertex of $e$).

By \Cref{freevertexbetti} we have
    \begin{equation}\label{5.21}
        \beta_{i,j}(R/J_G) = \sum_{\substack{0 \leq i_1\leq i \\ ~0 \leq j_1\leq j}}\beta_{i_1,j_1}(R/J_{G_1\cup \{e\}})\beta_{i-i_1,j-j_1}(R/{J_{G_2}}).
    \end{equation}
    Since $e$ is a cut-edge with a pendant vertex in $G_1 \cup \{e\}$, we can now apply \Cref{maintheo}
    to $R/J_{G_1 \cup \{e_1\}}$. Thus,
    \begin{multline}\label{1.2}
        \sum_{\substack{0 \leq i_1\leq i \\0 \leq j_1\leq j}}\beta_{i_1,j_1}(R/{J_{G_1\cup \{e\}}})\beta_{i-i_1,j-j_1}(R/{J_{G_2}}) = \\ 
        \sum_{\substack{2\leq i_1\leq i \\ 0 \leq j_1\leq j}}(\beta_{i_1,j_1}(R/{J_{G_1}}) + \beta_{i_1-1,j_1-2}(R/{J_{(G_1)_e}}))\beta_{i-i_1,j-j_1}(R/{J_{G_2}}) 
        \\
        + (\beta_{1,2}(R/{J_{G_1}})+ 1)\beta_{i-1,j-2}(R/{J_{G_2}}) + \beta_{i,j}(R/{J_{G_2}}).
    \end{multline}
   
    Here, we are using the fact
    that $\beta_{1,j}(R/J_{G_1 \cup \{e\}}) =0$ 
    if $j \neq 2$, and when $j=2$,
    $J_{G_1 \cup \{e\}}$ has one more generator
    than $J_{G_1}$, that is, $\beta_{1,2}(R/J_{G_1 \cup \{e\}})
    = \beta_{1,2}(R/J_{G_1})+1$.
    
By expanding out and regrouping, we get
\footnotesize
        \begin{align} \label{1.3}
        \beta_{i,j}(J_G) =&  \sum_{
        \substack{1\leq i_1\leq i \\
        0\leq j_1\leq j}}\beta_{i_1,j_1}(R/{J_{G_1}})\beta_{i-i_1,j-j_1}(R/{J_{G_2}}) + \beta_{i,j}(R/{J_{G_2}}) \nonumber\\
        & + \sum_{\substack{2\leq i_1\leq i \\
        0 \leq j_1\leq j}}\beta_{i_1-1,j_1-2}(R/{J_{(G_1)_e}})\beta_{i-i_1,j-j_1}(R/{J_{G_2}}) +\beta_{i-1,j-2}(R/{J_{G_2}}) \nonumber\\
        =&   \sum_{
        \substack{0 \leq i_1\leq i \\
        0 \leq j_1\leq j}}\beta_{i_1,j_1}(R/{J_{G_1}})\beta_{i-i_1,j-j_1}(R/{J_{G_2}})+  \sum_{\substack{0 \leq i_1\leq i-1 
        \\ 0 \leq j_1\leq j-2}}\beta_{i_1,j_1}(R/{J_{(G_1)_e}})\beta_{i-1-i_1,j-2-j_1}(R/{J_{G_2}}).
    \end{align}
    \normalsize
Since $G_1$ and $G_2$ are graphs on disjoint sets of vertices, $J_{G_1}$ and $J_{G_2}$ are ideals on disjoint sets of variables. Hence, 
\begin{align}\label{1.4}
    \sum_{\substack{0\leq i_1\leq i \\
    0\leq j_1\leq j}}\beta_{i_1,j_1}(R/{J_{G_1}})\beta_{i-i_1,j-j_1}(R/{J_{G_2}}) & = \beta_{i,j}(R/{J_{G_1}+J_{G_2}}) \nonumber \\
    &=\beta_{i,j}(R/{J_{G_1\cup G_2}}) = \beta_{i,j}(R/{J_{(G\setminus e)}}).
\end{align}
Similarly, the same is true for $(G_1)_e$ and $G_2$. Note, that since $v$ is already a free vertex of $G_2$, we have $(G\setminus e)_e = (G_1)_e \cup G_2$. Hence,
\begin{align}\label{1.5}
    \sum_{\substack{0 \leq i_1\leq i-1 \\ 0 \leq j_1\leq j-2}}\beta_{i_1,j_1}(R/{J_{(G_1)_e}})\beta_{i-1-i_1,j-2-j_1}(R/{J_{G_2}}) & = \beta_{i-1,j-2}(R/{J_{(G_1)_e}+J_{G_2}}) \nonumber\\
    & = \beta_{i-1,j-2}(R/{J_{(G_1)_e\cup G_2}})
    \nonumber \\
    & = \beta_{i-1,j-2}(R/{J_{(G\setminus e)_e}}).
\end{align}
Thus, substituting \Cref{1.5} with \Cref{1.4}
into \Cref{1.3}, we get the desired conclusion.
\end{proof}

Because we have a complete Betti splitting,
\Cref{regprojbounds} implies the collorary.

\begin{corollary}\label{singlevertexcor}
    With the hypotheses as in \Cref{singlefreevertex},
    \begin{eqnarray*}
     {\rm reg}(J_G) &= &\max\{{\rm reg}(J_{G\setminus e}),
    {\rm reg}((J_{G \setminus e})_e) +1\} 
     ~~\mbox{and} \\
     {\rm pd}(J_G) &= &\max\{{\rm pd}(J_{G\setminus e}), 
     {\rm pd}(J_{(G \setminus e)_e}) +1\}. 
     \end{eqnarray*}
\end{corollary}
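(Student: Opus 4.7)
The plan is to invoke Theorem \ref{regprojbounds} applied to the complete Betti splitting $J_G = J_{G\setminus e} + \langle f_e\rangle$ established in \Cref{singlefreevertex}, and then simplify the three-term maxima to the two-term maxima in the statement by absorbing the contributions from $\langle f_e\rangle$ and by using \Cref{lemma 2.18} to rewrite the terms involving $J_{G\setminus e}\cap \langle f_e\rangle$.

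Concretely, with $J = J_{G\setminus e}$ and $K = \langle f_e\rangle$, I would first note that since $\langle f_e\rangle$ is a principal ideal generated in degree $2$, we have ${\rm reg}(\langle f_e\rangle) = 2$ and ${\rm pd}(\langle f_e\rangle) = 0$. Next, \Cref{lemma 2.18} gives a degree-$2$ shift isomorphism $J_{G\setminus e}\cap \langle f_e\rangle \cong J_{(G\setminus e)_e}(-2)$ (using that $e$ is a cut edge), so
\[
{\rm reg}(J_{G\setminus e}\cap \langle f_e\rangle) = {\rm reg}(J_{(G\setminus e)_e}) + 2 \quad\text{and}\quad {\rm pd}(J_{G\setminus e}\cap \langle f_e\rangle) = {\rm pd}(J_{(G\setminus e)_e}).
\]
Substituting these into the definitions of $m$ and $p$ from \Cref{regprojbounds} gives
\[
m = \max\{{\rm reg}(J_{G\setminus e}),\, 2,\, {\rm reg}(J_{(G\setminus e)_e}) + 1\}, \qquad p = \max\{{\rm pd}(J_{G\setminus e}),\, 0,\, {\rm pd}(J_{(G\setminus e)_e}) + 1\}.
\]

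Since the splitting is complete, i.e., $(r,s) = (0,0)$, both hypotheses $m \geq s = 0$ and $p \geq r = 0$ are automatic, so \Cref{regprojbounds} yields ${\rm reg}(J_G) = m$ and ${\rm pd}(J_G) = p$. It remains to check that the constants coming from $\langle f_e\rangle$ are absorbed by the other terms: the graph $(G\setminus e)_e$ contains the edge $e$, so $J_{(G\setminus e)_e}$ is a nonzero ideal generated in degree $2$, giving ${\rm reg}(J_{(G\setminus e)_e}) \geq 2$ and hence ${\rm reg}(J_{(G\setminus e)_e}) + 1 \geq 3 > 2$; similarly ${\rm pd}(J_{(G\setminus e)_e}) + 1 \geq 1 > 0$. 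Dropping the dominated terms delivers the claimed formulas.

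There is no real obstacle here; the only thing that requires any attention is verifying that the "${\rm reg}(\langle f_e\rangle) = 2$" and "${\rm pd}(\langle f_e\rangle) = 0$" entries are genuinely subsumed, which is automatic as soon as one observes that $(G\setminus e)_e$ is a non-empty graph. The work is really packaged inside \Cref{singlefreevertex}, \Cref{lemma 2.18}, and \Cref{regprojbounds}; this corollary is a bookkeeping consequence.
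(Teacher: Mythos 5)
Your proposal is correct and follows essentially the same route as the paper: apply \Cref{regprojbounds} to the complete Betti splitting from \Cref{singlefreevertex}, rewrite the intersection terms via \Cref{lemma 2.18}, and absorb the contributions of $\langle f_e\rangle$. One small inaccuracy in your absorption step: by the paper's definition of $G_e$, the graph $(G\setminus e)_e$ does \emph{not} contain the edge $e$ itself (it only adds edges among $N_{G\setminus e}(u)$ and among $N_{G\setminus e}(v)$), so your justification that $J_{(G\setminus e)_e}\neq 0$ should instead appeal to $(G\setminus e)_e\supseteq G\setminus e$ having an edge; the paper sidesteps this by absorbing $2=\reg(\langle f_e\rangle)$ into $\reg(J_{G\setminus e})$, which rests on the same implicit assumption that $G\setminus e$ is not edgeless.
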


\begin{proof}
    Because $J_G = J_{G\setminus e} + \langle f_e \rangle$
    is a complete Betti splitting, \Cref{regprojbounds} gives
    \begin{eqnarray*}
     {\rm reg}(J_G) &= &\max\{{\rm reg}(J_{G\setminus e}),
     {\rm reg}(\langle f_e \rangle), 
     {\rm reg}(J_{G \setminus e} \cap \langle f_e \rangle) -1\} 
     ~~\mbox{and} \\
     {\rm pd}(J_G) &= &\max\{{\rm pd}(J_{G\setminus e}),
     {\rm pd}(\langle f_e \rangle), 
     {\rm pd}(J_{G \setminus e} \cap \langle f_e \rangle) +1\}. 
     \end{eqnarray*}
The result now follows since
$2 = {\rm reg}(\langle f_e \rangle) \leq 
{\rm reg}(J_{G \setminus e})$ and 
$0 = {\rm pd}(\langle f_e \rangle)$ and because
\Cref{lemma 2.18} implies 
 ${\rm reg}(J_{G \setminus e} \cap \langle f_e \rangle) 
    = {\rm reg}(J_{(G\setminus e)_e}) +2$
    and 
    ${\rm pd}(J_{G \setminus e} \cap \langle f_e \rangle) = {\rm pd}(J_{(G \setminus e)_e})$.
\end{proof}

Recall that an edge $e = \{u,v\}$ is a free cut-edge of $G$
if both $u$ and $v$ are free vertices of $G \setminus e$.  
When \Cref{singlefreevertex} is applied to a free cut-edge,
we can recover the following results 
of  Saeedi Madani and Kiani \cite{kiani_regularity_2013-1}.

\begin{corollary}[{\cite[Proposition 3.4]{kiani_regularity_2013-1}}]
\label{freecutedge}
Let $e = \{u,v\} \in E(G)$ be a free cut-edge.
Then
    \begin{enumerate}
        \item $\beta_{i,j}(J_G) = \beta_{i,j}(J_{G\setminus e}) + \beta_{i-1,j-2}(J_{G\setminus e})$,
        \item \rm pd($J_G$) = pd($J_{G\setminus e}) + 1$, and
        \item \rm reg($J_G$) = reg($J_{G\setminus e}$) + 1.
    \end{enumerate}
\end{corollary}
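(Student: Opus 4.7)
The plan is to obtain this corollary directly from \Cref{singlefreevertex} and \Cref{singlevertexcor} by identifying the graph $(G\setminus e)_e$ in the special case when $e$ is a free cut-edge. The key observation I would make first is that under this stronger hypothesis on $e = \{u,v\}$, the construction $(G\setminus e)_e$ collapses to $G\setminus e$ itself. Recall from the preliminaries that $(G\setminus e)_e$ is obtained from $G\setminus e$ by adding all edges $\{k,\ell\}$ with $k,\ell \in N_{G\setminus e}(u)$ or $k,\ell \in N_{G\setminus e}(v)$. Since both $u$ and $v$ are free vertices in $G\setminus e$, the induced subgraphs on $N_{G\setminus e}(u)$ and $N_{G\setminus e}(v)$ are already cliques in $G\setminus e$. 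Hence every edge added in forming $(G\setminus e)_e$ was already present, and $(G\setminus e)_e = G\setminus e$.

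Once this identification is in hand, part (1) follows immediately: \Cref{singlefreevertex} applies because a free cut-edge is in particular a cut-edge with a free endpoint in $G\setminus e$, giving
\[\beta_{i,j}(J_G) = \beta_{i,j}(J_{G\setminus e}) + \beta_{i-1,j-2}(J_{(G\setminus e)_e}) = \beta_{i,j}(J_{G\setminus e}) + \beta_{i-1,j-2}(J_{G\setminus e})\]
for all $i\geq 1$ and $j\geq 0$, and the $i=0$ case is trivial since $\mathfrak{G}(J_G)$ is the disjoint union of $\mathfrak{G}(J_{G\setminus e})$ with $\{f_e\}$.

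For (2) and (3), I would feed the equality $(G\setminus e)_e = G\setminus e$ into \Cref{singlevertexcor}. This yields
\[{\rm reg}(J_G) = \max\{{\rm reg}(J_{G\setminus e}),\, {\rm reg}(J_{G\setminus e})+1\} = {\rm reg}(J_{G\setminus e}) + 1,\]
and likewise
\[{\rm pd}(J_G) = \max\{{\rm pd}(J_{G\setminus e}),\, {\rm pd}(J_{G\setminus e})+1\} = {\rm pd}(J_{G\setminus e}) + 1.\]

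The only real content of the argument is therefore the combinatorial observation $(G\setminus e)_e = G\setminus e$, and there is no genuine obstacle here since the definition of free vertex is precisely what is needed. Everything else is a direct quotation of the preceding theorem and its corollary.
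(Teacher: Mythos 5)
Your proposal is correct and is essentially identical to the paper's own proof: both reduce the statement to the observation that $(G\setminus e)_e = G\setminus e$ when both endpoints of $e$ are free vertices of $G\setminus e$, and then quote \Cref{singlefreevertex} and \Cref{singlevertexcor}. You simply spell out the (easy) verification of that combinatorial identity in slightly more detail than the paper does.
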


\begin{proof}
    When $e$ is a free cut-edge of $G$, then
    $(G\setminus e)_e = G\setminus e$.  The results then
    follow from \Cref{singlefreevertex} and \Cref{singlevertexcor} by using the equality
    $J_{(G\setminus e)_e} = J_{G\setminus e}.$
\end{proof}

One application of \Cref{maintheo} is finding the Betti numbers of the binomial edge ideals of certain graphs. The corollary below 
is a new proof
of \cite[Proposition 3.8]{jayanthan_almost_2021}
for the graded Betti
numbers of the binomial edge ideals of any
star graph $S_n$.

\begin{corollary}\label{star}
    Let $S_n$ denote the star graph on $n$-vertices. Then we have:
    \[ \beta_{i}(J_{S_n}) = 
    \beta_{i,i+3}(J_{S_n}) = i\binom{n}{i+2} \text{\hspace{4mm} $i\geq 1$}. \]
    Furthermore, $\beta_0(J_{S_n}) = 
    \beta_{0,2}(S_n) = n-1$.
\end{corollary}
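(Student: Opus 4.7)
The plan is to prove the corollary by induction on $n$, using \Cref{maintheo} applied to a pendant edge of $S_n$. The base case $n = 2$ is immediate: $J_{S_2} = \langle x_1y_2 - x_2y_1\rangle$ is principal, so $\beta_0 = 1 = n-1$ and $\beta_i = 0$ for $i \geq 1$, agreeing with $i\binom{2}{i+2} = 0$. The claim $\beta_0(J_{S_n}) = n-1$ follows from the fact that $S_n$ has $n-1$ edges, giving $n-1$ minimal generators of $J_{S_n}$.

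For the inductive step, fix the edge $e = \{1, n\}$ of $S_n$, noting that $n$ is a pendant vertex. The first key observation is that $S_n \setminus e$ consists of the star $S_{n-1}$ on vertices $\{1, \ldots, n-1\}$ together with $n$ as an isolated vertex; since isolated vertices do not contribute generators, $J_{S_n \setminus e} = J_{S_{n-1}}$ as ideals, and they share the same Betti numbers. The second, crucial, observation is the computation of $(S_n \setminus e)_e$: in $S_n \setminus e$ we have $N(1) = \{2, \ldots, n-1\}$ and $N(n) = \emptyset$, so the construction adds all edges among $\{2, \ldots, n-1\}$. The resulting graph on $\{1, \ldots, n-1\}$ is the complete graph $K_{n-1}$ (with $n$ still isolated), hence $J_{(S_n\setminus e)_e} = J_{K_{n-1}}$.

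Now I would apply \Cref{maintheo}(2) to obtain, for $i \geq 1$ and $j \geq 0$,
\[
\beta_{i,j}(J_{S_n}) = \beta_{i,j}(J_{S_{n-1}}) + \beta_{i-1, j-2}(J_{K_{n-1}}).
\]
By \Cref{completebetti}, $\beta_{i-1,j-2}(J_{K_{n-1}})$ is nonzero only when $j - 2 = (i-1) + 2$, i.e.\ $j = i+3$, and equals $i \binom{n-1}{i+1}$ in that case. By the induction hypothesis, $\beta_{i,j}(J_{S_{n-1}})$ is nonzero only when $j = i+3$, and equals $i\binom{n-1}{i+2}$. Thus $\beta_{i,j}(J_{S_n}) = 0$ unless $j = i+3$, confirming that the resolution is concentrated in a single strand, and
\[
\beta_{i,i+3}(J_{S_n}) = i\binom{n-1}{i+2} + i\binom{n-1}{i+1} = i\binom{n}{i+2}
\]
by Pascal's identity, completing the induction.

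There is no real obstacle here: the only things to verify carefully are the identification $(S_n\setminus e)_e = K_{n-1}$ (plus an isolated vertex), which hinges on unwinding the definition of $G_e$, and the bookkeeping check that the shift by $(1,2)$ in the Tor indices aligns the linear strand of $J_{K_{n-1}}$ with the $(i,i+3)$-strand of $J_{S_n}$. The rest is a one-line application of Pascal's identity.
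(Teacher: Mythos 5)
Your proof is correct and follows essentially the same route as the paper: induction on $n$ via the pendant edge $e=\{1,n\}$, the identification $(S_n\setminus e)_e = K_{n-1}$ plus an isolated vertex, \Cref{maintheo}(2) combined with \Cref{completebetti}, and Pascal's identity. The only difference is that you spell out the verification that $(S_n\setminus e)_e = K_{n-1}$ more explicitly than the paper does.
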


\begin{proof}
     Note that the statement about $0$-th
     graded Betti numbers just follows from 
     the fact 
     that $S_n$ has $n-1$ edges.

Consider the edge $e =\{1,n\}$. Since $S_n\setminus e =  S_{n-1} \cup \{n\}$, 
we have $(S_n\setminus e)_e = K_{n-1} \cup \{n\}$. So from \Cref{maintheo}, we have:
    \[\beta_{i,j}(J_{S_n}) = \beta_{i,j}(J_{S_{n-1}})+\beta_{k-1,j-2}(J_{K_{n-1}})
    ~~\text{ for all $i\geq 1$}.\]
    We can now use induction to show the above assertion. For $n = 2$, we can see that $S_2$ is just an edge. We know that $\beta_{i,j}(J_{S_2}) = 0$ for all $i\geq 1$. Hence, we can see that it agrees with the above formula as $\binom{2}{r} = 0$ when $r>2$. Now assume the formula holds for $n-1$. We must show that it holds for $n$.

    From \Cref{completebetti}, we know that $\beta_{i,i+2}(J_{K_{n-1}}) = (i+1)\binom{n-1}{i+2}$ and $\beta_{i,j}(J_{K_{n-1}}) = 0$ 
    if $j\neq i+2$. 
        Hence, using induction and \Cref{maintheo}, we can see that $\beta_{i,j}(J_{S_n}) = \beta_{i,j}(J_{S_{n-1}})+\beta_{i-1,j-2}(J_{K_{n-1}})=0+0$, when $j\neq i+3$. We also 
        have 
    \[\beta_{i,i+3}(J_{S_n}) = \beta_{i,i+3}(J_{S_{n-1}})+\beta_{i-1,i+1}(J_{K_{n-1}}) = i\binom{n-1}{i+2}+i\binom{n-1}{i+1} = i\binom{n}{i+2}.\]
    This verifies the formula of the statement.
\end{proof}


\section{Partial Betti splittings of binomial edge ideals: 
\texorpdfstring{$s$}{s}-partitions}

In this section we consider the other natural candidate to study in the context of partial Betti splittings.  In this case, we fix a vertex $s \in V)$, and let $G_1$ be the graph with $E(G_1)$ equal to the set of edges of  $G$ that contain $s$ (so $G_1$ is isomorphic
to a star graph) and $G_2 = G \setminus \{s\}$.  
We formalize this idea in the next definition.

\begin{definition}\label{vpart}
For $s\in V(G)$, an {\it $s$-partition} of $J_G$ is the splitting 
$J_G = J_{G_1}+J_{G_2},$ 
where $G_1$ is the subgraph of $G$ with $V(G_1) = N_G[s]$ and  
$E(G_1) = \{\{s,k\}\mid k\in N_G(s)\}$, and $G_2=G\setminus \{s\}$.
\end{definition}

Note that the graph $G_1$ in an $s$-partition is isomorphic to the star graph $S_{\deg(s)+1}$.  We will show that
an $s$-partition always gives a partial Betti splitting of $J_G$:

\begin{theorem}\label{maintheo2}
Let $G$ be a graph on $[n]$ and let $J_G = J_{G_1}+J_{G_2}$ 
be an $s$-partition of $G$ for some $s\in [n]$. Let $c(s)$ 
be the size of the largest clique containing $s$. Then, for all 
$i, j$ with $i \geq c(s)$ or $j \geq i+4$,
    \begin{equation*}
        \beta_{i,j}(J_G) = \beta_{i,j}(J_{G_1})+\beta_{i,j}(J_{G_2})+\beta_{i-1, j}(J_{G_1}\cap J_{G_2}).
    \end{equation*}
In other words, $J_G = J_{G_1}+J_{G_2}$ is a $(c(s), 4)$-Betti splitting.
\end{theorem}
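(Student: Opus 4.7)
The plan is to invoke \Cref{parcon2} with thresholds $r=c(s)$ and $4$, which will establish $J_G=J_{G_1}+J_{G_2}$ as a $(c(s),4)$-Betti splitting provided that for every $(i,j)$ satisfying $i\geq c(s)$ or $j\geq i+4$, the implication ``$\beta_{i-1,j}(J_{G_1}\cap J_{G_2})>0 \Rightarrow \beta_{i-1,j}(J_{G_1})=0 \text{ and } \beta_{i-1,j}(J_{G_2})=0$'' holds. I would pass to the $\mathbb{N}^n$-multigrading on $R$ with $\deg x_i=\deg y_i=e_i$, so that each ordinary Betti number decomposes as a sum $\beta_{k,j}=\sum_{|\mathbf{a}|=j}\beta_{k,\mathbf{a}}$ of multigraded ones, and all ideals in sight are multigraded.

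Every generator $x_sy_k-x_ky_s$ of $J_{G_1}$ lies in $\langle x_s, y_s\rangle$, so $J_{G_1}\cap J_{G_2}\subseteq J_{G_1}\subseteq \langle x_s, y_s\rangle$. Hence the multigraded Betti numbers of both $J_{G_1}$ and $J_{G_1}\cap J_{G_2}$ are concentrated on multidegrees $\mathbf{a}$ with $a_s\geq 1$, while those of $J_{G_2}$ are concentrated on multidegrees with $a_s=0$, since $J_{G_2}$ involves none of $x_s$ or $y_s$. Applying the multigraded analogue of \Cref{parcon} at each multidegree $\mathbf{a}$ and summing over those with $|\mathbf{a}|=j$, the disjointness of the multidegree supports of $J_{G_2}$ and $J_{G_1}\cap J_{G_2}$ immediately yields the $J_{G_2}$-vanishing.

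For the $J_{G_1}$-vanishing, \Cref{star} pins down the nonzero Betti numbers of $J_{G_1}\cong J_{S_{\deg(s)+1}}$ at $(0,2)$ and at $(k,k+3)$ for $1\leq k\leq\deg(s)-1$. In the subregion $j\geq i+4$ we have $j-(i-1)\geq 5>3$, so $\beta_{i-1,j}(J_{G_1})=0$ is automatic and nothing further is needed. In the subregion $i\geq c(s)$, the only position where $\beta_{i-1,j}(J_{G_1})$ can be nonzero is $j=i+2$, so the $J_{G_1}$-vanishing reduces to the key claim
\[
\beta_{k,k+3}(J_{G_1}\cap J_{G_2}) = 0 \quad\text{for every}\quad k\geq c(s)-1.
\]

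This shifted-linear-strand vanishing is where I expect the main obstacle to lie. I would prove it multigradedly, restricting to multidegrees $\mathbf{a}$ with $a_s\geq 1$ and $|\mathbf{a}|=k+3$. For such an $\mathbf{a}$ whose support includes a vertex outside $N_G[s]$, the corresponding Betti number of $J_{G_1}$ already vanishes by inspection of the explicit star resolution; for an $\mathbf{a}$ supported on $N_G[s]$, I would exploit the Tor long exact sequence arising from
\[
0\to J_{G_1}\cap J_{G_2}\to J_{G_1}\oplus J_{G_2}\to J_G\to 0,
\]
together with the multigraded refinement of \Cref{linearbinom}. The crucial combinatorial input is that a nonzero multigraded linear-strand contribution $\beta_{k+1,\mathbf{a}}(J_G)$ with $a_s\geq 1$ corresponds to a clique of $G$ of size $k+3$ containing $s$; such cliques exist only if $k+3\leq c(s)$, which fails for $k\geq c(s)-1$, cutting off the last channel through which $\beta_{k,\mathbf{a}}(J_{G_1}\cap J_{G_2})$ could meet $\beta_{k,\mathbf{a}}(J_{G_1})$ in this range. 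Summing the resulting multigraded splitting equations over all $\mathbf{a}$ with $|\mathbf{a}|=j$ delivers the desired $\mathbb{N}$-graded $(c(s),4)$-Betti splitting.
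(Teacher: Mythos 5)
Your overall architecture is sound and in fact coincides with the paper's: pass to the $\mathbb{N}^n$-multigrading, use that $J_{G_2}$ lives in multidegrees with $a_s=0$ while $J_{G_1}$ and $J_{G_1}\cap J_{G_2}$ live in multidegrees with $a_s\geq 1$, note that \Cref{star} makes the region $j\geq i+4$ automatic, and reduce everything to the single claim that $\beta_{k,k+3}(J_{G_1}\cap J_{G_2})=0$ for all $k\geq c(s)-1$. That is exactly the claim the paper establishes first, and you have correctly identified it as the crux.

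The gap is in your proposed proof of that claim. Even granting the multigraded refinement of \Cref{linearbinom} that you invoke, the long exact sequence of $\tor$ for $0\to J_{G_1}\cap J_{G_2}\to J_{G_1}\oplus J_{G_2}\to J_G\to 0$ at a multidegree $\mathbf{a}$ with $|\mathbf{a}|=k+3$ and $a_s\geq 1$ reads
\[
\tor_{k+1}(k,J_G)_{\mathbf{a}}\longrightarrow \tor_k(k,J_{G_1}\cap J_{G_2})_{\mathbf{a}}\longrightarrow \tor_k(k,J_{G_1})_{\mathbf{a}}\oplus\tor_k(k,J_{G_2})_{\mathbf{a}},
\]
so the vanishing of $\tor_{k+1}(k,J_G)_{\mathbf{a}}$ supplied by your clique count only shows that $\tor_k(k,J_{G_1}\cap J_{G_2})_{\mathbf{a}}$ \emph{injects} into $\tor_k(k,J_{G_1})_{\mathbf{a}}$; it does not force the source to vanish. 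The target is not zero in the relevant range: $|\mathbf{a}|=k+3$ is precisely where the nonlinear strand of the star $J_{G_1}$ is concentrated ($\beta_{k,k+3}(J_{G_1})\neq 0$ for $1\leq k\leq \deg(s)-1$ by \Cref{star}), and both modules are supported on multidegrees over $N_G[s]$ with $a_s\geq 1$, so there is no disjointness of supports to exploit. Concluding vanishing from this injection would require knowing that the map $\varphi_k$ of \Cref{singlesplit} is zero, which is essentially what the splitting is supposed to establish --- the argument is circular. The paper closes this gap by computing the intersection directly: \Cref{deg3gen} exhibits the explicit basis $\{x_sf_{a,b},\,y_sf_{a,b}\}$ of the degree-three piece of $J_{G_1}\cap J_{G_2}$ indexed by edges of $G'=G[N_G(s)]$, and \Cref{thm:Betti-intersection} then gives $\beta_{k,k+3}(J_{G_1}\cap J_{G_2})=2\beta_{k,k+2}(J_{G'})+\beta_{k-1,k+1}(J_{G'})$, which vanishes for $k\geq c(s)-1$ by \Cref{linearbinom} since the largest clique of $G'$ has size $c(s)-1$. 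Your combinatorial input (no cliques through $s$ of size $k+3$) is the right one, but it must enter through an analysis of the intersection ideal itself rather than through the linear strand of $J_G$ in the long exact sequence.
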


Our proof hinges on a careful examination
of $J_{G_2} \cap J_{G_2}$, which is carried out below.

\begin{lemma}\label{deg3gen}
Let $G$ be a graph on $[n]$ and let $J_G = J_{G_1}+J_{G_2}$ be an $s$-partition of $G$ for some $s\in [n]$.
Then the set
\[
\mathcal{B} = \{x_sf_{a,b}, y_sf_{a,b}\mid a,b\in N_G(s) \text{ and } \{a,b\}\in E(G)\}.\]
is a $k$-basis for $(J_{G_1} \cap J_{G_2})_3$.
\end{lemma}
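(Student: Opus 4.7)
The plan is to prove $\mathcal{B}$ is a basis by first establishing the inclusion $\mathcal{B} \subseteq J_{G_1} \cap J_{G_2}$ and then pinning down the shape of an arbitrary element of $(J_{G_1} \cap J_{G_2})_3$ via two successive regradings of $R$. The containment of $\mathcal{B}$ in $J_{G_2}$ is immediate, since $\{a,b\} \in E(G)$ with $a,b \neq s$ forces $\{a,b\} \in E(G_2)$. For the containment in $J_{G_1}$, I would verify the identities
\[ x_s f_{a,b} = x_a f_{s,b} - x_b f_{s,a} \qquad \text{and} \qquad y_s f_{a,b} = y_a f_{s,b} - y_b f_{s,a}, \]
which hold whenever $a, b \in N_G(s)$, because then both $f_{s,a}$ and $f_{s,b}$ are generators of $J_{G_1}$.

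For the spanning claim, I would equip $R$ with the $\mathbb{N}^2$-grading in which $\deg x_s = \deg y_s = (1,0)$ and $\deg x_i = \deg y_i = (0,1)$ for $i \neq s$. Under this bigrading, the generators of $J_{G_1}$ have bidegree $(1,1)$ while those of $J_{G_2}$ have bidegree $(0,2)$, so multiplying by linear forms shows that $(J_{G_1})_3$ occupies bidegrees $(2,1)$ and $(1,2)$, whereas $(J_{G_2})_3$ occupies bidegrees $(1,2)$ and $(0,3)$. Hence $(J_{G_1} \cap J_{G_2})_3$ lives entirely in bidegree $(1,2)$. The subspace $(J_{G_2})_{(1,2)}$ is spanned by $\{x_s f_{a,b}, y_s f_{a,b} : \{a,b\} \in E(G_2)\}$, and these are $k$-linearly independent: a relation $x_s P + y_s Q = 0$ with $P, Q$ free of $x_s, y_s$ forces $P = Q = 0$ by viewing $R$ as a polynomial ring over $k[x_i, y_i : i \neq s]$, and then the $f_{a,b}$ are independent because their leading terms under a suitable lex order are distinct. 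Consequently, every $f \in (J_{G_1} \cap J_{G_2})_3$ has a unique expansion
\[ f = \sum_{\{a,b\} \in E(G_2)} (\alpha_{a,b} x_s + \beta_{a,b} y_s) f_{a,b}. \]

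To conclude, I would refine to the $\mathbb{N}^n$-multigrading with $\deg x_i = \deg y_i = e_i$ and, for each $\{a,b\} \in E(G_2)$, extract the multidegree-$(e_s + e_a + e_b)$ component of this expansion, namely $(\alpha_{a,b} x_s + \beta_{a,b} y_s) f_{a,b}$. Since $f \in J_{G_1}$, this component must lie in $(J_{G_1})_{e_s + e_a + e_b}$, and I would split into three cases according to how many of $a,b$ belong to $N_G(s)$. If both do, the containment is automatic by the first step and imposes no constraint. If neither does, then $(J_{G_1})_{e_s + e_a + e_b} = 0$, since no generator $f_{s,a}$ or $f_{s,b}$ exists, and this forces $\alpha_{a,b} = \beta_{a,b} = 0$. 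The delicate case, and the main obstacle, is when exactly one of $a,b$ — say $a$ — lies in $N_G(s)$: then $(J_{G_1})_{e_s + e_a + e_b} = \mathrm{span}_k(x_b f_{s,a}, y_b f_{s,a})$, and a direct comparison of the six distinct monomials that appear across $x_s f_{a,b},\, y_s f_{a,b},\, x_b f_{s,a},\, y_b f_{s,a}$ will show that the only element of $\mathrm{span}(x_s f_{a,b}, y_s f_{a,b})$ lying in $\mathrm{span}(x_b f_{s,a}, y_b f_{s,a})$ is zero, once again forcing $\alpha_{a,b} = \beta_{a,b} = 0$. Combining the three cases confines $f$ to the $k$-span of $\mathcal{B}$, and linear independence of $\mathcal{B}$ is inherited from that of the ambient spanning set for $(J_{G_2})_{(1,2)}$.
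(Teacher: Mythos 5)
Your proof is correct, but it runs in the mirror-image direction of the paper's. The paper expands an element $P$ of $(J_{G_1}\cap J_{G_2})_3$ in the basis $\{x_if_e,y_if_e\}$ of $(J_{G_1})_3$, and uses the bigrading $\deg x_j=(1,0)$, $\deg y_j=(0,1)$ together with a term-by-term tracking of which monomials (such as $y_sx_ix_a$) occur in which basis elements to force the coefficients to cancel or pair up into multiples of $x_sf_{a,i}$ and $y_sf_{a,i}$ with $a,i\in N_G(s)$; only at the very end does it invoke membership in $J_{G_2}$ to impose $\{a,i\}\in E(G)$. You instead expand in the basis of $(J_{G_2})_3$ --- after first using the bigrading that isolates $x_s,y_s$ to confine $(J_{G_1}\cap J_{G_2})_3$ to bidegree $(1,2)$, so that only the elements $x_sf_{a,b},y_sf_{a,b}$ with $\{a,b\}\in E(G_2)$ can appear --- and then impose membership in $J_{G_1}$ one $\mathbb{N}^n$-multidegree at a time, which decouples the problem into an independent check for each edge $\{a,b\}$. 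Your route buys a cleaner reduction: the only nontrivial step is the six-monomial comparison in the case $a\in N_G(s)$, $b\notin N_G(s)$, and that check does close (the monomial $x_sx_ay_b$ occurs only in $x_sf_{a,b}$ and $x_by_ay_s$ only in $y_sf_{a,b}$ among the four spanning elements, so both coefficients vanish). You also make explicit, via $x_sf_{a,b}=x_af_{s,b}-x_bf_{s,a}$ and its $y$-analogue, why $\mathcal{B}\subseteq J_{G_1}$, a containment the paper only obtains implicitly inside its coefficient computation. The paper's argument has the advantage of never needing the fine multigrading; yours avoids the somewhat delicate ``additive inverse'' bookkeeping at the cost of one extra regrading.
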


\begin{proof}
Let $N_G(s) = \{v_1,\dots, v_r\}$. Since $E(G_1) \cap E(G_2) = \emptyset$, the generators of $J_{G_1} \cap J_{G_2}$ are of degree at least $3$. 
First of all observe that 
$\B_1 = \{x_if_e, y_if_e\mid e \in E(G_1) \text{ and } i\in \{1, \dots, n\}\}$ and 
$\B_2 = \{x_if_e, y_if_e\mid e\in E(J_{G_2}) \text{ and } i\in \{1, \dots, n\}\}$ form $k$-bases for the subspaces $(J_{G_1})_3$ and $(J_{G_2})_3$ respectively.
    
Let $P \in (J_{G_1} \cap J_{G_2})_3 = (J_{G_1})_3 \cap (J_{G_2})_3$. Write 
\begin{equation}\label{eq.P}
P = \sum_{g_{i,e}\in \B_1}c_{i,e} g_{i,e},    
\end{equation}
where $c_{i,e} \in k$. 

We first claim that the coefficients of $x_if_{a,s}$ and $y_if_{a,s}$ in the linear combination of $P$ are zero if $i \notin \{v_1,\ldots, v_r\}$. We prove this for $x_if_{a,s}$ and the other proof is similar. 
Let $c$ be the coefficient of $x_if_{a,s}$.
Observe that, since $i\notin \{v_1,\dots, v_k\}$, the term  $y_sx_ix_a$ in $P$, appears in only one basis element, namely $x_if_{a,s}$. 
Since $P$ is in $(J_{G_2})_3$ as well, we can write
\begin{equation}\label{2.8}
    P = S+ y_s(c x_ix_a+L) = Q + y_s\left(\sum_{f_e\in \mathfrak{G}(J_{G_2})}c'_e f_e\right),
\end{equation}
where no terms of $S$ and $Q$ are divisible by $y_s$ and $L$ does not have any monomial terms divisible by $x_ix_a$. Since $y_s$ does not divide any term of $S$ and $Q$, the above equality implies that $c x_ix_a+L = \sum_{f_e\in \mathfrak{G}(J_{G_2})}c'_e f_e$. Now by considering the grading on $R$ given by $\deg x_j = (1,0)$ and $\deg y_j = (0,1)$ for all $j$, we can see that $x_ix_a$ is of degree $(2,0)$ but the degree of each term $f_e$ in $\mathfrak{G}(J_{G_2})$ is $(1,1)$. Hence, for \Cref{2.8} to hold, $c=0$. This completes the proof of the claim. 

Now consider the case where $i\in \{v_1,\dots, v_k\}$. In this case, it can be seen that the term $y_sx_ix_a$ when written as an element of $(J_{G_1})_3$ appears in the basis elements $x_if_{a,s}$ and $x_af_{i,s}$, and in no other basis element. As before, to make sure that there are no elements of degree $(2,0)$, the coefficients of $x_if_{a,v}$ and $x_af_{i,s}$ in \Cref{eq.P} must be additive inverses of each other. Denote the coefficient of $x_if_{a,s}$ by $c$. Then, 
$$cx_if_{a,s} - cx_af_{i,s} = cx_s(x_ay_i-x_iy_a) = cx_sf_{a,i}.$$ Similar arguments show that the coefficients of $y_if_{a,s}$ and $y_af_{i,s}$ must be additive inverses of each other, and that the corresponding linear combination in the \Cref{eq.P} appears as $c'y_sf_{a,i}$. Therefore, \Cref{eq.P} becomes 
    \[P = 
    \sum_{a,i\in N_G(s)}c_{i,a} x_sf_{a,i}+c'_{i,a} y_sf_{a,i}.\]
Since $P \in (J_{G_2})_3$, it is easily observed that $c_{i,a} = 0$ whenever $\{i,a\} \notin E(G)$. Therefore, $\mathcal{B}$ spans the subspace $(J_{G_1} \cap J_{G_2})_3$. Linear independence is fairly straightforward as $s \neq a, b$ for any $a, b \in N_G(s)$. Hence the assertion of the lemma is proved.
\end{proof}

\begin{remark}\label{deg4}
If $G$ is a triangle-free graph, then there does not 
exist any $a,b\in N_G(s)$ with $\{a,b\}\in E(G)$ for 
any $s\in V(G)$. Hence it follows from \Cref{deg3gen} 
that there are no degree 3 generators of 
$J_{G_1}\cap J_{G_2}$ for any $s$-partition. Hence, 
$J_{G_1} \cap J_{G_2}$ is generated by elements 
of degrees $4$ or higher.
\end{remark}

Since the generators of $J_{G_1}\cap J_{G_2}$ resemble the generators of a binomial edge ideal, we can calculate its linear strand in terms of the linear strand of some binomial edge ideal.

\begin{theorem}\label{thm:Betti-intersection}
Let $G$ be a graph on $[n]$ and let $J_G = J_{G_1}+J_{G_2}$ be an $s$-partition of $G$ for some $s\in [n]$.  If 
$G'$ is the induced subgraph of $G$ on $N_G(s)$, then
    \[\beta_{i,i+3}(J_{G_1}\cap J_{G_2}) = 2\beta_{i,i+2}(J_{G'})+\beta_{i-1,i+1}(J_{G'})\text{\hspace{2mm} for all $i\geq 0$}.\]
\end{theorem}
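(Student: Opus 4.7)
The plan is to reduce the computation of the linear strand of $J_{G_1}\cap J_{G_2}$ to that of a simpler subideal, and then resolve that subideal via a Koszul-type short exact sequence involving $J_{G'}$.

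First, I would set $L = x_s J_{G'} + y_s J_{G'} = \langle x_s,y_s\rangle\cdot J_{G'}$. By \Cref{deg3gen}, the set $\{x_s f_{a,b}, y_s f_{a,b} : \{a,b\}\in E(G')\}$ is a $k$-basis for $(J_{G_1}\cap J_{G_2})_3$, and it also minimally generates $L$; hence $L$ is precisely the subideal of $J_{G_1}\cap J_{G_2}$ generated by its degree-$3$ part. The quotient $M = (J_{G_1}\cap J_{G_2})/L$ therefore vanishes in degrees $\leq 3$, so $M$ is generated in degrees $\geq 4$ and $\tor_i(k,M)_j = 0$ whenever $j < i+4$. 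The Tor long exact sequence of $0\to L\to J_{G_1}\cap J_{G_2}\to M\to 0$ collapses at total degree $i+3$ and yields $\beta_{i,i+3}(J_{G_1}\cap J_{G_2}) = \beta_{i,i+3}(L)$ for every $i\geq 0$, reducing the problem to the computation of $\beta_{i,i+3}(L)$.

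Next, I would establish the short exact sequence
\[0 \longrightarrow J_{G'}(-2) \xrightarrow{\,h\mapsto (y_s h,\,-x_s h)\,} J_{G'}(-1)^2 \xrightarrow{\,(f,g)\mapsto x_s f+y_s g\,} L \longrightarrow 0.\]
Surjectivity on the right is immediate from the definition of $L$. For exactness in the middle, any $(f,g)\in J_{G'}(-1)^2$ with $x_s f + y_s g = 0$ must satisfy $f = y_s h$ and $g = -x_s h$ for some $h\in R$, and the condition $y_s h\in J_{G'}$ forces $h\in J_{G'}$ because $J_{G'}$ is extended from the subring $k[x_i,y_i : i\in N_G(s)]$, which contains neither $x_s$ nor $y_s$.

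Finally, I would feed this short exact sequence into the Tor long exact sequence. In total degree $i+3$, after absorbing the degree shifts on $J_{G'}(-1)^2$ and $J_{G'}(-2)$, it reads
\[\cdots\to\tor_i(k,J_{G'})_{i+1}\to 2\tor_i(k,J_{G'})_{i+2}\to\tor_i(k,L)_{i+3}\to \tor_{i-1}(k,J_{G'})_{i+1}\xrightarrow{\alpha} 2\tor_{i-1}(k,J_{G'})_{i+2}\to\cdots.\]
Since $J_{G'}$ is generated in degree $2$, one has $\tor_i(k,J_{G'})_{i+1} = 0$ for every $i\geq 0$, eliminating the leftmost term. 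The connecting map $\alpha$ is induced by multiplication by the matrix $(y_s,-x_s)^T$, whose entries lie in the maximal ideal, so $\alpha$ is the zero map. The resulting short exact sequence yields $\beta_{i,i+3}(L) = 2\beta_{i,i+2}(J_{G'}) + \beta_{i-1,i+1}(J_{G'})$, and combined with the reduction in the first step this proves the theorem. The most delicate point is the vanishing of $\alpha$, but this is formal once one observes that any chain map of free $R$-modules defined by a matrix with entries in $\mathfrak{m}$ becomes zero upon tensoring with $k=R/\mathfrak{m}$.
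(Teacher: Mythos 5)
Your argument is correct, and it is worth comparing with the paper's. Both proofs first replace $J_{G_1}\cap J_{G_2}$ by the subideal generated by its degree-three part (your $L$, the paper's $I$); your justification via the quotient $M$ and the vanishing $\tor_{i}(k,M)_{i+3}=\tor_{i+1}(k,M)_{i+3}=0$ makes explicit a reduction the paper only asserts. The paper then writes $I=I_x+I_y$ with $I_x=x_sJ_{G'}$ and $I_y=y_sJ_{G'}$, computes $I_x\cap I_y=x_sy_sJ_{G'}$, and applies \Cref{parcon} together with the $\mathbb{N}^n$-multigrading: the Betti numbers of $I_x\cap I_y$ live in multidegrees whose $s$-coordinate is $2$ while those of $I_x$ and $I_y$ live in multidegrees whose $s$-coordinate is $1$, so the relevant Tor maps vanish for support reasons. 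Your sequence $0\to J_{G'}(-2)\to J_{G'}(-1)^2\to L\to 0$ is exactly the Mayer--Vietoris sequence $0\to I_x\cap I_y\to I_x\oplus I_y\to I\to 0$ under the identifications $J_{G'}(-1)\cong I_x$, $J_{G'}(-1)\cong I_y$, $J_{G'}(-2)\cong I_x\cap I_y$, so the decomposition is the same; what is genuinely different is how you kill the induced maps on Tor. One caution there: the principle you invoke at the end is stated too strongly. A module map whose defining matrix has entries in $\mathfrak{m}$ need not lift to a chain map of minimal resolutions with entries in $\mathfrak{m}$; for instance $R/(x^2)\xrightarrow{\cdot x}R/(x^3)$ over $k[x]$ induces an isomorphism on $\tor_1(k,-)$. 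What saves you here is that the source and target of $h\mapsto(y_sh,-x_sh)$ are shifts of the \emph{same} module $J_{G'}$, so the map is a direct sum of honest endomorphisms $\cdot y_s$ and $-\cdot x_s$ of $J_{G'}$; by functoriality these induce multiplication by $y_s$ and $-x_s$ on the $k$-vector space $\tor_{i-1}(k,J_{G'})$, which is zero. With that one sentence supplied, your proof is complete, and it has the merit of avoiding both the multigrading and \Cref{parcon}.
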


\begin{proof}
From \Cref{deg3gen}, we have that the minimal degree 3 generators for $J_{G_1}\cap J_{G_2}$ are
    \[L  =\{x_sf_{a,b}, y_sf_{a,b}\mid a,b\in N_G(s) \text{ and } \{a,b\}\in E(G)\}.\]
Since, $J_{G_1}\cap J_{G_2}$ is generated in degree 3 or higher, if $I$ is the ideal generated by $L$, 
then $\beta_{i,i+3}(J_{G_1}\cap J_{G_2}) = \beta_{i,i+3}(I)$ for all $i \geq 0$.
Now consider the partition $I = I_x+I_y$, where
$$
\mathfrak{G}(I_x) = \{x_sf_{a,b}\mid \text{ $\{a,b\}\in E(G')$}\} ~\mbox{and} ~
\mathfrak{G}(I_y) = \{y_sf_{a,b}\mid \text{$\{a,b\}\in E(G')$}\}.
$$

We now claim that
        \[I_x\cap I_y = \langle\{x_sy_sf_{a,b}\mid \text{$\{a,b\}\in E(G')$}\}\rangle.\]
It is clear that each $x_sy_sf_{a,b} \in I_x\cap I_y$. For the other inclusion, consider $g\in I_x\cap I_y$. Since $g$ is in both $I_x$ and $I_y$, we can write $g$ as
        \[g = x_s\left(\sum k_{a,b}f_{a,b}\right) = 
        y_s\left(\sum k'_{a,b}f_{a,b}\right),\]
where $k_{a,b}, k'_{a,b} \in R$. Since, none of the $f_{a,b}$'s involve the variables $x_s$ and $y_s$, some terms of $k_{a,b}$ are divisible by $y_s$, for each $\{a,b\}\in E(G')$. Separating out the terms which are divisible by $y_s$, write:
    \[g  = x_s\left(\sum k_{a,b}f_{a,b}\right) = 
    x_s\left(\sum y_sh_{a,b}f_{a,b}+L\right),\]
where no term of $L$ is divisible by $y_s$. Since $g$ is divisible by $y_s$, we have that $y_s|L$. But since no term of $L$ is divisible by $y_s$, this implies that $L=0$. Hence, 
$$g = x_sy_s\left(\sum h_{a,b}f_{a,b}\right)\in \langle\{x_sy_sf_{a,b}\mid \text{$\{a,b\}\in E(G')$}\}\rangle.$$
        
It is readily seen that $J_{G'}\xrightarrow{\cdot x_s} I_x$, $J_{G'}\xrightarrow{\cdot y_s} I_y$, and $J_{G'}\xrightarrow{\cdot x_sy_s} I_x\cap I_y$ are isomorphisms of degree 1, 1, and 2 respectively.  Now, consider $\mathbb{N}^n$ multigrading on $R$ with $\deg x_i = \deg y_i =  e_i$ for all $i=1,\ldots, n$. The above isomorphisms imply that:
    \[\tor_i(I_x,k)_{\mathbf{a}+e_s}\cong \tor_i(J_{G'},k)_{\mathbf{a}} \cong \tor_i(I_y,k)_{\mathbf{a}+e_s} \]
    and
    $$\tor_i(I_x\cap I_y,k)_{\mathbf{a}+2e_s}\cong \tor_i(J_{G'},k)_{\mathbf{a}},$$
where $\mathbf{a} = (a_1,\ldots,a_n) \in \mathbb{N}^n$ with $a_s=0$. Summing up all the multigraded Betti numbers, we get $\beta_{i,j}(I_x) = \beta_{i,j-1}(J_{G'}) = \beta_{i,j}(I_y) $ and $\beta_{i,j}(I_x\cap I_y) = \beta_{i,j-2}(J_{G'})$.  Observe that all the non-zero
multigraded Betti numbers of $I_x\cap I_y$ occur only on 
multidegrees $\mathbf{a}+2e_s$ while all 
Betti numbers of $I_x$ and $I_y$ occur only at $\mathbf{a}+e_s$. Hence, by using \Cref{parcon}  and combining all multidegrees, we have 
$$\beta_{i,j}(I) = \beta_{i,j}(I_x)+\beta_{i,j}(I_y)+\beta_{i-1,j}(I_x\cap I_y) ~~\mbox{for all $i,j \geq 0$}.$$ 
Therefore,
    \[\beta_{i,i+3}(J_{G_1}\cap J_{G_2}) = \beta_{i,i+3}(I) = \beta_{i,i+2}(J_{G'})+\beta_{i,i+2}(J_{G'})+\beta_{i-1,i+1}(J_{G'})\]
for all $i \geq 0$.
\end{proof}

We can now prove the main result of this section:

\begin{proof}[Proof of \Cref{maintheo2}]
We first prove that $\beta_{i,i+3}(J_{G_1}\cap J_{G_2}) = 0$ for all $i\geq c(s)-1$, since we will require this fact later in the proof.
It follows from \Cref{thm:Betti-intersection} that for all $i \geq 0$ 
    \[\beta_{i,i+3}(J_{G_1}\cap J_{G_2}) = 2\beta_{i,i+2}(J_{G'})+\beta_{i-1,i+1}(J_{G'}),\] 
where $G'$ is the induced subgraph of $G$ on $N_G(s)$. From 
\Cref{linearbinom}, we get $\beta_{i,i+2}(J_{G'}) = (i+1)f_{i+1}
(\Delta(G'))$, where $f_k(\Delta(G'))$ is the number of faces of 
$\Delta(G')$ of dimension $k$. Since the largest clique in $G'$ is of size $c(s)-1$,  $\beta_{i,i+2}(J_{G'}) = 0$ for all $i\geq c(s)-2$. Hence $\beta_{i,i+3}(J_{G_1}\cap J_{G_2}) = 0$ for all $i\geq c(s)-1$ by
the above formula.

Consider the $\mathbb{N}^n$-grading on $R$ 
given by $\deg x_i = \deg y_i = e_i$, the $i$-th unit vector.   
Now
fix any $i \geq 1$ and let 
${\bf a} = (a_1,\ldots,a_n) \in \mathbb{N}^n$ 
with $\sum_{\ell=1}^n a_\ell  \geq i+ 4$.  
All the generators of $J_{G_1}\cap J_{G_2}$ are of the form $fx_s+gy_s$, 
so their multigraded Betti numbers occur within multidegrees 
$\mathbf{a}$ such that its $s$-th component, $a_s$ is non-zero.  Since $J_{G_2}$ contains no generators of the form 
$fx_s+gy_s$, $\beta_{i,{\bf a}}(J_{G_1}\cap J_{G_2})>0$ implies that $\beta_{i,{\bf a}}(J_{G_2}) = 0$ for all $i\in \mathbb{N}$,
and similarly, $\beta_{i-1,{\bf a}}(J_{G_1} \cap J_{G_2}) > 0$
implies that $\beta_{i,{\bf a}}(J_{G_2}) = 0$

From \Cref{star}, since $G_1$ is a star graph,
    \[ \beta_{i}(J_{G_1}) = \beta_{i,i+3}(J_{G_1}) = i\binom{\deg(s)}{i+2} 
    ~\mbox{for all $i\geq 1$}.\]
    Hence, we can see that for all multidegrees ${\bf a} = 
    (a_1,\dots,a_n)$ with $\sum_{\ell=1}^n a_\ell\geq i+4$, we also have
$\beta_{i,{\bf a}}(J_{G_1}\cap J_{G_2})>0$ implies that 
$\beta_{i,{\bf a}}(J_{G_1})=0$, and $\beta_{i-1,{\bf a}}(J_{G_1}\cap J_{G_2})>0$ implies that $\beta_{i-1,{\bf a}}(J_{G_1})=0$.

Therefore, from \Cref{parcon}, we have 
    \[\beta_{i,{\bf a}}(J_G) = \beta_{i,{\bf a}}(J_{G_1})+
    \beta_{i,{\bf a}}(J_{G_2})+
    \beta_{i-1, {\bf a}}(J_{G_1}\cap J_{G_2}),\] 
    for all $i \geq 0$ and multidegrees ${\bf a}$ with $\sum_{\ell=1}^n a_\ell\geq i+4$.

Now fix any $i \geq c(s)$ and ${\bf a} \in \mathbb{N}^n$.  As 
argued above, if $\beta_{i,{\bf a}}(J_{G_1} \cap J_{G_2})>0$, then
$\beta_{i,{\bf a}}(J_{G_2}) = 0$ (and a similar statement for 
$\beta_{i-1,{\bf a}}(J_{G_1} \cap J_{G_2})$). 
We also know that if $\beta_{i,{\bf a}}(J_{G_1} \cap J_{G_2}) > 0$,
with $i \geq c(s)-1$, then $\sum_{\ell=1}^n a_l \geq i+4$ since
$J_{G_1} \cap J_{G_2}$ is generated in degree three and 
$\beta_{i,i+3}(J_{G_1}\cap J_{G_2}) =0$ for all $i \geq c(s)-1$.
On the other hand, since ${\rm reg}(J_2) = 3$ by \Cref{star},
we have $\beta_{i,{\bf a}}(J_{G_2}) = 0$ for all 
$\sum_{\ell=1}^n a_\ell \neq i+3$ if $i \geq 1$.
So, we have shown that if 
$\beta_{i,{\bf a}}(J_{G_1} \cap J_{G_2}) > 0$, then
$\beta_{i,{\bf a}}(J_{G_2}) = 0$, and also if
$\beta_{i-1,{\bf a}}(J_{G_1} \cap J_{G_2}) > 0$, then $\beta_{i-1,{\bf a}}(J_{G_2}) = 0$.

So by using \Cref{parcon}, we  have 
    \[\beta_{i,{\bf a}}(J_G) = \beta_{i,{\bf a}}(J_{G_1})+
    \beta_{i,{\bf a}}(J_{G_2})+
    \beta_{i-1, {\bf a}}(J_{G_1}\cap J_{G_2}),\] 
    for all $i \geq c(s)$ and multidegrees ${\bf a} \in \mathbb{N}^n$.

    Therefore, by combining these two results we have 
    \[\beta_{i,{\bf a}}(J_G) = \beta_{i,{\bf a}}(J_{G_1})+
    \beta_{i,{\bf a}}(J_{G_2})+
    \beta_{i-1,{\bf a}}(J_{G_1}\cap J_{G_2}),\] 
    for all $i$ and multidegrees ${\bf a}$ with $i\geq c(s)$ or $\sum_{k=1}^n a_k\geq i+4$.  By summing over all multidegrees,
    we obtain the same result for the standard grading, i.e.,
    $$\beta_{i,j}(J_G) = \beta_{i,j}(J_{G_1})+
    \beta_{i,j}(J_{G_2})+
    \beta_{i-1, j}(J_{G_1}\cap J_{G_2}),$$ 
    for all $i,j$  with $i\geq c(s)$ or $j\geq i+4$. In
    other words, we have a $(c(s),4)$-Betti splitting.
\end{proof}

\begin{example}
If $G$ is the graph of \Cref{runningexample}, then we saw in
\Cref{runningexample2} that the ideal $J_G$ has a 
$(4,4)$-Betti splitting.  Note that the splitting
of \Cref{runningexample2} is an example of an $s$-partition
with $s=1$.  Furthermore, the largest clique that the
vertex $s=1$ belongs to has size four (there is a clique on the
vertices $\{1,2,4,5\})$.  So, by the previous
result $J_G$ will have a $(c(1),4)$-Betti splitting with
$c(1)=4$, as shown in this example.
\end{example}

\begin{corollary}\label{trianglefree}
Let $G$ be a graph on $[n]$ and let $J_G = J_{G_1}+J_{G_2}$ be an $s$-partition of $G$ for some $s\in [n]$. 
If $G$ is a triangle-free graph, then
$J_G = J_{G_1}+J_{G_2}$ is a complete Betti splitting.
\end{corollary}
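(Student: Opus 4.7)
The plan is to upgrade the partial Betti splitting supplied by \Cref{maintheo2} to a complete one by separately handling the handful of small bidegrees it leaves uncovered, using \Cref{parcon} together with \Cref{deg4}.

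The first step is to observe that, since $G$ is triangle-free, every vertex $s$ satisfies $c(s) \leq 2$: the only cliques of $G$ are edges and isolated vertices, so no clique containing $s$ exceeds size $2$. Applying \Cref{maintheo2} therefore gives a $(2,4)$-Betti splitting, so the splitting equality already holds whenever $i \geq 2$ or $j \geq i+4$. The bidegrees remaining to check are those with $i \leq 1$ and $j \leq i+3$, that is, $i = 0$ together with $i = 1$ and $j \in \{2,3,4\}$ (smaller $j$ being trivial). The $i = 0$ case is immediate: since $\mathfrak{G}(J_G) = \mathfrak{G}(J_{G_1}) \sqcup \mathfrak{G}(J_{G_2})$ and the $\beta_{-1,j}$ term vanishes by convention, the desired identity reduces to the tautology $\beta_{0,j}(J_G) = \beta_{0,j}(J_{G_1}) + \beta_{0,j}(J_{G_2})$.

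The heart of the argument is the $i=1$ case, and here I would apply \Cref{parcon}. The key input is that, when $G$ is triangle-free, $J_{G_1} \cap J_{G_2}$ is generated in degrees $\geq 4$: no generator can have degree $\leq 2$ because every generator of $J_{G_1}$ involves $x_s$ or $y_s$ while no generator of $J_{G_2}$ does, and \Cref{deg4} rules out degree-$3$ generators precisely by triangle-freeness. From the standard observation that a minimal free resolution of an ideal generated in degrees $\geq d$ has $\beta_{i,j} = 0$ for $j < d + i$ (proved by an induction on $i$ using that the entries of a minimal differential lie in the maximal ideal), we conclude $\beta_{i,j}(J_{G_1} \cap J_{G_2}) = 0$ whenever $j < i+4$. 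In particular, both $\beta_{1,j}(J_{G_1}\cap J_{G_2})$ and $\beta_{0,j}(J_{G_1}\cap J_{G_2})$ vanish for $j \in \{2,3\}$, so \Cref{parcon} applies vacuously at $(1,2)$ and $(1,3)$. At $(1,4)$, only $\beta_{0,4}(J_{G_1}\cap J_{G_2})$ can be nonzero, and this creates no conflict with the hypotheses of \Cref{parcon} since $J_{G_1}$ and $J_{G_2}$ are each generated in degree $2$, forcing $\beta_{0,4}(J_{G_1}) = \beta_{0,4}(J_{G_2}) = 0$.

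I expect no serious obstacle once \Cref{maintheo2} and \Cref{deg4} are in hand. The only mildly non-mechanical step is invoking the resolution bound to get $\beta_{i,j}(J_{G_1}\cap J_{G_2}) = 0$ for $j < i+4$, but this reduces to identifying the initial degree of the intersection, which is what triangle-freeness buys us through \Cref{deg4}.
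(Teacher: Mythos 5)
Your proposal is correct and follows essentially the same route as the paper: reduce to a $(2,4)$-Betti splitting via \Cref{maintheo2} using $c(s)\leq 2$, then settle the remaining bidegrees $i\leq 1$, $j\leq i+3$ by combining \Cref{deg4} (the intersection is generated in degree $\geq 4$, hence $\beta_{i,j}(J_{G_1}\cap J_{G_2})=0$ for $j<i+4$) with \Cref{parcon}, the only nontrivial check being $(1,4)$ where $\beta_{0,4}(J_{G_1})=\beta_{0,4}(J_{G_2})=0$ because both ideals are generated in degree $2$. The paper's argument is the same in every essential respect.
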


\begin{proof}
Since $G$ is a triangle-free graph, the largest clique containing $s$
is a $K_2$, i.e., $c(s)=2$.  Thus  \Cref{maintheo2} implies that
$J_G = J_{G_1}+J_{G_2}$ is a $(2,4)$-Betti splitting, that
is, 
$$\beta_{i,j}(J_G) = \beta_{i,j}(J_{G_1})+\beta_{i,j}(J_{G_2})+\beta_{i-1, j}(J_{G_1}\cap J_{G_2} )\text{ for all $i\geq 2$ or $j \geq i +4$.}$$

To complete the proof, we just need to show the
above formula also holds for the graded Betti numbers
$\beta_{i,j}(J_G)$ with $(i,j) \in \{(0,0),(0,1),(0,2),(0,3),(1,1),
(1,2),(1,3),(1,4)\}$.
We always have $\beta_{0,j}(J_G) = 
\beta_{0,j}(J_{G_1})+\beta_{0,j}(J_G) + \beta_{-1,j}(J_{G_1}\cap J_{G_2})$ for all $j \geq 0$.  Also, since $J_G, J_{G_1}$ and 
$J_{G_2}$ are generated in
degree $2$ and $J_{G_1} \cap J_{G_2}$ generated in degree
four (by \Cref{deg4}), we have  
$$0 = \beta_{1,j}(J_G) = \beta_{1,j}(J_{G_1})+\beta_{1,j}(J_G) + \beta_{0,j}(J_{G_1}\cap J_{G_2}) = 0 + 0 + 0$$
for $j=1,2$.  

Finally, because $J_{G_1} \cap J_{G_2}$ is generated
in degree four, we have $\beta_{1,3}(J_{G_1}\cap J_{G_2}) = 
\beta_{1,4}(J_{G_1}\cap J_{G_2}) = 0$.  Thus, for $(i,j) = (1,3)$
the conditions of \Cref{parcon} are vacuously satisfied
(since $\beta_{1,3}(J_{G_1}\cap J_{G_2}) = \beta_{0,3}(J_{G_1}\cap J_{G_2}) = 0$).  For
$i=1$ and $j=4$, we have $\beta_{1,4}(J_{G_1}\cap J_{G_2}) = 0$ and
when $\beta_{0,4}(J_{G_1} \cap J_{G_2}) > 0$, we have $\beta_{0,4}(J_{G_1}) = \beta_{0,4}(J_{G_2}) =0$
since both $J_{G_1}$ and $J_{G_2}$ are generated in degree 2.   So
again the conditions of \Cref{parcon} are satisfied. Thus
$$ \beta_{1,j}(J_G) = \beta_{1,j}(J_{G_1})+\beta_{1,j}(J_{G_2}) + \beta_{1,j}(J_{G_1}\cap J_{G_2}) = \beta_{1,j}(J_{G_1})+\beta_{1,j}(J_G) $$
for $j=3,4$.
\end{proof}

\begin{corollary}
Let $G$ be a graph on $[n]$ and let $J_G = J_{G_1}+J_{G_2}$ be an $s$-partition of $G$ for some $s\in [n]$. 
    \begin{enumerate}
        \item If $\pd(J_G)\geq c(s)$, then
     $\pd(J_G) = \max\{ \pd(J_{G_1}), \pd(J_{G_2}), \pd(J_{G_1}\cap J_{G_2})+1\}.$

        \item If $\reg(J_G)\geq 4$, then
        $\reg(J_G) = \max\{\reg(J_{G_2}), \reg(J_{G_1}\cap J_{G_2})-1\}.$
    \end{enumerate}

\end{corollary}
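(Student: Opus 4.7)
The plan is to apply \Cref{regprojbounds} directly to the $(c(s), 4)$-Betti splitting $J_G = J_{G_1} + J_{G_2}$ provided by \Cref{maintheo2}. Both parts amount to verifying that the hypothesis $p \geq r$ (resp.\ $m \geq s$) of \Cref{regprojbounds} holds, and then, in part (2), using \Cref{star} to prune one term from the resulting maximum.

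For part (1), I would set $p = \max\{\pd(J_{G_1}), \pd(J_{G_2}), \pd(J_{G_1}\cap J_{G_2})+1\}$. The mapping cone applied to the short exact sequence $0 \to J_{G_1}\cap J_{G_2} \to J_{G_1} \oplus J_{G_2} \to J_G \to 0$ always gives $\pd(J_G) \leq p$, so the hypothesis $\pd(J_G) \geq c(s)$ forces $p \geq c(s)$. Since $J_G = J_{G_1} + J_{G_2}$ is a $(c(s),4)$-Betti splitting by \Cref{maintheo2}, \Cref{regprojbounds}(2) (with $r = c(s)$) yields $\pd(J_G) = p$, which is the claimed formula.

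For part (2), I would similarly set $m = \max\{\reg(J_{G_1}), \reg(J_{G_2}), \reg(J_{G_1}\cap J_{G_2})-1\}$. The mapping cone gives $\reg(J_G) \leq m$, so $\reg(J_G) \geq 4$ implies $m \geq 4$, and \Cref{regprojbounds}(1) (with $s = 4$) produces $\reg(J_G) = m$. The only remaining task is to drop $\reg(J_{G_1})$ from the maximum. For this I would invoke \Cref{star}: since $G_1$ is the star graph $S_{\deg(s)+1}$, the only nonzero graded Betti numbers $\beta_{i,j}(J_{G_1})$ occur with $j - i \in \{2, 3\}$, so $\reg(J_{G_1}) \leq 3$. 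Combined with $m = \reg(J_G) \geq 4$, this means $\reg(J_{G_1}) < m$, so $\reg(J_{G_1})$ never attains the maximum and can be removed, giving $\reg(J_G) = \max\{\reg(J_{G_2}), \reg(J_{G_1}\cap J_{G_2}) - 1\}$.

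There is really no serious obstacle here: \Cref{maintheo2} and \Cref{regprojbounds} do all of the heavy lifting, and beyond these the only extra input is the elementary bound $\reg(J_{G_1}) \leq 3$ for the star graph coming from \Cref{star}. The slight care required is in noticing that the hypotheses $\pd(J_G) \geq c(s)$ and $\reg(J_G) \geq 4$ are exactly what is needed to upgrade the mapping cone inequalities $\pd(J_G) \leq p$ and $\reg(J_G) \leq m$ into equalities via \Cref{regprojbounds}.
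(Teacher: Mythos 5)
Your proposal is correct and follows essentially the same route as the paper: apply \Cref{regprojbounds} to the $(c(s),4)$-Betti splitting from \Cref{maintheo2}, using the mapping cone bound to convert the hypotheses $\pd(J_G)\geq c(s)$ and $\reg(J_G)\geq 4$ into the conditions $p\geq r$ and $m\geq s$, and then discarding $\reg(J_{G_1})\leq 3$ via \Cref{star}. If anything, you spell out the step (that the hypotheses force $p \geq c(s)$ and $m \geq 4$) more explicitly than the paper does.
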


\begin{proof}
    Given that $\pd(J_G)\geq c(s)$, we know that there is a partial splitting for all $\beta_{i,j}(J_G)$, for all $i\geq c(s)$. Hence, $\pd(J_G) = \max\{ \pd(J_{G_1}), \pd(J_{G_2}), \pd(J_{G_1}\cap J_{G_2})+1\}$.

    Similarly, if $\reg(J_G)\geq 4$, we know that there is a partial splitting for all $\beta_{i,j}(J_G)$, for all $i\geq c(s)$. Hence, $\reg(J_G) = \max\{ \reg(J_{G_1}), \reg(J_{G_2}), \reg(J_{G_1}\cap J_{G_2})-1\}$. Since $\reg(J_{G_1}) = 3$, we have $\reg(J_G) = \max\{\reg(J_{G_2}), \reg(J_{G_1}\cap J_{G_2})-1\}$.
\end{proof}


\section{On the total Betti numbers of binomial edge ideals of trees}

In this section, we explore an application of \Cref{maintheo} to find certain Betti numbers of trees. In particular, we obtain a precise expression for the second Betti number of $J_T$ for any tree $T$. Note that $\beta_1(J_T)$ was first computed in \cite[ Theorem 3.1]{jayanthan_almost_2021}. We begin with recalling a simple technical result that we require in our main results.   

\begin{lemma}\label{pendantexist}
Let $T$ be a tree which is not an edge with $v\in V(T)$ and let $S_v = \{u\in N_T(v) ~|~ \deg u > 1\}$. Then, there exists $a\in V(T)$ with $\deg a>1$ such that
    $|S_a|\leq 1.$
\end{lemma}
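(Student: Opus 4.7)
The plan is to find the desired vertex $a$ by ``peeling off'' leaves. Define $T'$ to be the subgraph of $T$ induced on the set of non-leaf vertices $V(T') = \{v \in V(T) \mid \deg_T v \geq 2\}$. The key observation linking $T'$ to the statement is that, for any $v \in V(T')$, the $T'$-degree of $v$ equals $|S_v|$: the neighbors of $v$ in $T$ that survive in $T'$ are exactly those of degree $\geq 2$ in $T$. Thus finding $a$ with $|S_a| \leq 1$ amounts to finding a vertex of $T'$ whose degree in $T'$ is at most $1$, i.e., an isolated vertex or a leaf of $T'$.

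First I would check that $T'$ is a (non-empty) tree. Connectedness is standard: if $u,v \in V(T')$, the unique $T$-path from $u$ to $v$ consists of internal vertices, because any intermediate vertex on this path has two distinct neighbors along the path and hence has $T$-degree $\geq 2$. Since $T$ is a tree not equal to a single edge, $|V(T)| \geq 3$, so the degree sum $2(|V(T)|-1)$ strictly exceeds $|V(T)|$, forcing at least one vertex of degree $\geq 2$; hence $V(T') \neq \emptyset$.

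I would then split into two cases. If $|V(T')| = 1$, take $a$ to be the unique vertex of $T'$; then $|S_a| = \deg_{T'}(a) = 0 \leq 1$. If $|V(T')| \geq 2$, then $T'$ is a tree on at least two vertices and therefore has a leaf $a$; for this $a$ we have $|S_a| = \deg_{T'}(a) = 1$. In either case the desired vertex is produced.

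There is no substantial obstacle here; the only point requiring care is the connectedness of $T'$ (so that the ``leaf of $T'$'' step is meaningful), which follows from the standard fact that the path in a tree between two internal vertices passes only through internal vertices.
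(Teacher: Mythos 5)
Your proof is correct. Note, however, that the paper does not actually prove this lemma: its ``proof'' is a one-line citation to \cite[Proposition 4.1]{JK2005}. So your argument is not a variant of the paper's proof but a self-contained replacement for an external reference, and it works. The key identification --- that for the induced subgraph $T'$ on the vertices of $T$-degree at least $2$ one has $\deg_{T'}(v) = |S_v|$, so the lemma reduces to finding a vertex of $T'$ of degree at most $1$ --- is exactly the right reduction, and your verification that $T'$ is a nonempty tree (connectedness via the fact that interior vertices of a path between internal vertices are themselves internal; nonemptiness via the degree-sum count for $|V(T)| \geq 3$) is complete, so the leaf/isolated-vertex dichotomy finishes the argument. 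The only point worth flagging is one of interpretation rather than a gap: as literally stated the lemma would fail for a one-vertex tree (which is also ``not an edge''), and your reading $|V(T)| \geq 3$ is the one consistent with how the lemma is used in Section 6, where $E(T) \neq \emptyset$ is assumed. An equivalent standard alternative, for comparison, is to take $a$ to be the second-to-last vertex on a longest path in $T$; your peeling argument buys essentially the same thing with a cleaner bookkeeping of $|S_a|$.
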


\begin{proof}
    See \cite[Proposition 4.1]{JK2005}.
\end{proof}

To compute the second Betti number of $J_T$, we use \Cref{maintheo} to reduce the computation to graphs with a fewer number of vertices. 
One of the graphs involved in this process becomes a clique sum of a tree and a complete graph. So, we now compute the first Betti number of this class of graphs.

\begin{theorem}\label{T+K_m}
Let $G=T \cup_{a} K_m$. If $|V(G)| = n$, then 
    \begin{eqnarray*}
        \beta_1(J_G) &= &\binom{n-1}{2}+2\binom{m}{3}+\sum_{w\notin V(K_m)}\binom{\deg_G w}{3}+\binom{\deg_G  a-m+1}{3} \\ & &+(n-m-1)\binom{m-1}{2}
        +(m-1)\binom{\deg_G a -m+1}{2}.
    \end{eqnarray*}
\end{theorem}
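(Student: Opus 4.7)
The plan is to proceed by induction on $|V(T)|$, using the complete Betti splitting from \Cref{maintheo}. For the base case $T=\{a\}$, one has $G=K_m$ and the stated formula collapses (using $\deg_G a - m + 1 = 0$ and $n-m-1 = -1$) to $\binom{m-1}{2}+2\binom{m}{3}-\binom{m-1}{2}=2\binom{m}{3}$, which matches $\beta_1(J_{K_m})$ from \Cref{completebetti}.

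For the inductive step, assume $T$ has at least one edge, and choose a leaf $v$ of $T$ with $v\neq a$; such a leaf exists because every tree on at least two vertices has at least two leaves, at most one of which can be $a$. Let $u$ be the unique neighbour of $v$ in $T$, so that $e=\{u,v\}$ is a pendant edge of $G$ as well. Applying \Cref{maintheo} and summing over $j$ gives
\begin{equation*}
\beta_1(J_G) \;=\; \beta_1(J_{G\setminus e}) + \beta_0(J_{(G\setminus e)_e}) \;=\; \beta_1(J_{G'}) + |E((G\setminus e)_e)|,
\end{equation*}
where $G' = T'\cup_a K_m$ with $T' = T\setminus v$ a tree on $n-1$ vertices; here we used that $v$ is isolated in $G\setminus e$, and that $\beta_0(J_{(G\setminus e)_e})$ counts the minimal generators of a binomial edge ideal, i.e.\ the edges of $(G\setminus e)_e$. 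The induction hypothesis supplies a closed form for $\beta_1(J_{G'})$.

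The heart of the argument is to count $|E((G\setminus e)_e)|$, which amounts to adding to $E(G)\setminus\{e\}$ the edges needed to complete $N_{G\setminus e}(u)$ into a clique (since $N_{G\setminus e}(v)=\emptyset$). Two sub-cases arise. When $u\neq a$, we have $N_{G\setminus e}(u) = N_T(u)\setminus\{v\}$, an independent set of size $\deg_T(u)-1$, so $\binom{\deg_T(u)-1}{2}$ new edges are added. When $u=a$, we have $N_{G\setminus e}(a) = (N_T(a)\setminus\{v\})\cup(V(K_m)\setminus\{a\})$, whose induced subgraph already contains the clique $K_{m-1}$ on $V(K_m)\setminus\{a\}$, so the number of new edges is $\binom{\deg_T(a)-1+m-1}{2}-\binom{m-1}{2}$, which we expand by $\binom{x+y}{2}=\binom{x}{2}+xy+\binom{y}{2}$ into $\binom{\deg_T(a)-1}{2} + (\deg_T(a)-1)(m-1)$.

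The proof is then completed by checking that the difference $\beta_1(J_G) - \beta_1(J_{G'})$ predicted by the stated formula matches the edge count in each sub-case, using the identities $\deg_G a - m + 1 = \deg_T a$ and $\deg_G w = \deg_T w$ for $w \notin V(K_m)$, together with Pascal's rule in the forms $\binom{n-1}{2}-\binom{n-2}{2}=n-2$, $\binom{\deg_T u}{3}-\binom{\deg_T u - 1}{3}=\binom{\deg_T u - 1}{2}$, and $\binom{\deg_T a}{2}-\binom{\deg_T a -1}{2}=\deg_T a -1$. The main obstacle is the case $u=a$, where the pendant-edge modification interacts with the clique $K_m$; the cross-term $(m-1)\binom{\deg_G a - m+1}{2}$ in the stated formula is present precisely to absorb the new edges bridging the $T$--$K_m$ boundary produced by the $\binom{x+y}{2}$ expansion above, and verifying this delicate cancellation is the only nontrivial bookkeeping step.
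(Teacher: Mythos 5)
Your proposal is correct and follows essentially the same route as the paper: induction on $|V(T)|$, removal of a pendant edge $e=\{u,v\}$, the complete Betti splitting $\beta_1(J_G)=\beta_1(J_{G\setminus e})+\beta_0(J_{(G\setminus e)_e})$ from \Cref{maintheo}, and a case split on $u\neq a$ versus $u=a$ with exactly the same edge counts for $(G\setminus e)_e$ (your $\binom{\deg_T a-1}{2}+(\deg_T a-1)(m-1)$ agrees with the paper's $\binom{\deg_G a-m}{2}+(m-1)\binom{\deg_G a-m}{1}$). The only cosmetic difference is that you select an arbitrary leaf $v\neq a$ rather than invoking \Cref{pendantexist}, whose extra condition $|S_u|\leq 1$ is not actually needed for this theorem; your final bookkeeping identities check out in both cases.
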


\begin{proof}
We prove the assertion by induction on $|V(T)|$. If $|V(T)| = 1$, then $G$ is a complete graph and $n = m$. Therefore, by \Cref{completebetti}
    \[\beta_1(J_G) = 2\binom{n}{3} = \binom{n-1}{2}+2\binom{n}{3}-\binom{n-1}{2}.\]
Hence the assertion is true.

Assume now that the assertion is true if $|V(T)| \leq n-m$. Let $G = T \cup_a K_m$.   Since $E(T)\neq \emptyset$, it follows from \Cref{pendantexist} that there exists $u\in V(T)$ such that $\deg u\neq 1$ and $|S_u|\leq 1$. We now split the remaining proof into two cases.

\noindent
\textbf{Case 1:} $u\neq a$.\\
Let $e= \{u,v\}$ with $\deg_G v = 1$ and let $G' = G \setminus v$. Then $G' = (T\setminus v) \cup_a K_m$ and $J_{G'} = J_{G\setminus e}$. Note that $\deg_{G'} u = \deg_G u - 1$ and $\deg_{G'} w = \deg_G w$ for all $w \neq u$. From \Cref{maintheo}, we have $\beta_1(J_G) = \beta_1(J_{G\setminus e}) + \beta_{0}(J_{(G\setminus e)_e})$. We now compute the two terms on the right hand side of this equation. It follows by induction that 
\begin{eqnarray*}
        \beta_1(J_{G\setminus e}) &= &\binom{n-2}{2}+2\binom{m}{3}+\sum_{w\notin V(K_m), w\neq u}\binom{\deg_{G'} w}{3}+\binom{\deg_G u-1}{3}\\ & &+\binom{\deg_G a-m+1}{3}+ (n-m-2)\binom{m-1}{2} + (m-1)\binom{\deg_G a -m+1}{2}.
\end{eqnarray*}

Now, $(G\setminus e)_e$ is obtained by adding $\binom{\deg u-1}{2}$ edges to $E(G\setminus e)$. Since $T$ is a tree and $G=T \cup_a K_m$, we have $E(G) = n-m+\binom{m}{2}$. Hence, $G\setminus e$ has $n-m-1 + \binom{m}{2} = n-2+\binom{m-1}{2}$ edges. This means that:
    \[\beta_0(J_{(G\setminus e)_e}) =|E((G\setminus e)_e)| = n-2 + \binom{m-1}{2} +\binom{\deg_G u-1}{2}.\]
Therefore, 
    \begin{eqnarray*}
        \beta_1(J_{G}) &= & \beta_1(J_{G\setminus e}) + \beta_{0}(J_{(G\setminus e)_e}) \\
        & = & \binom{n-2}{2}+2\binom{m}{3}+\sum_{w\notin V(K_m), w\neq u}\binom{\deg_G w}{3}+\binom{\deg_G u-1}{3} \\
        & &+ \binom{\deg_G a-m+1}{3} + (n-m-2)\binom{m-1}{2} + (m-1)\binom{\deg_G a -m+1}{2}\\ & &+  n-2 + \binom{m-1}{2} +\binom{\deg_G u-1}{2}\\
        &= & \binom{n-1}{2}+2\binom{m}{3}+\sum_{w\notin V(K_m)}\binom{\deg_G w}{3}+\binom{\deg_G  a-m+1}{3}\\ & &+(n-m-1)\binom{m-1}{2} +(m-1)\binom{\deg_G a -m+1}{2}.
    \end{eqnarray*}
Therefore, we obtain our desired formula.

\noindent
\textbf{Case 2:} $u=a$.

\noindent
Let $e= \{a,v\}$ with $\deg v = 1$. Then, as before, we apply induction to get
    \begin{eqnarray*}
        \beta_1(J_{G\setminus e}) &= & \binom{n-2}{2}+2\binom{m}{3}+\sum_{w\notin V(K_m)}\binom{\deg_G w}{3}+ \binom{\deg_G a-m}{3}\\ & &+ (n-m-2)\binom{m-1}{2}+(m-1)\binom{\deg_G a -m}{2}.
    \end{eqnarray*}
There are $\binom{\deg_G a-m}{2}+(m-1)\binom{\deg_G a-m}{1}$ new edges in $(G\setminus e)_e$. Thus 
    \[\beta_0(J_{(G\setminus e)_e}) = |E(G\setminus e)_e| = n-2+\binom{m-1}{2}+\binom{\deg_G a-m}{2} + (m-1)\binom{\deg_G a-m}{1}.\]
Using \Cref{maintheo} and the identity $\binom{n}{r} = \binom{n-1}{r}+\binom{n-1}{r-1}$ appropriately, we get:
\begin{eqnarray*}
    \beta_1(J_{G}) & = & \binom{n-2}{2}+2\binom{m}{3}+\sum_{w\notin V(K_m)}\binom{\deg_G w}{3}+ \binom{\deg_G a-m}{3}\\
    & &+ (n-m-2)\binom{m-1}{2}+(m-1)\binom{\deg_G a -m}{2}\\
    & &+ n-2+\binom{m-1}{2}+\binom{\deg_G a-m}{2} + (m-1)\binom{\deg_G a-m}{1} \\
   & = & \binom{n-1}{2}+2\binom{m}{3}+\sum_{w\notin V(K_m)}\binom{\deg_G w}{3}+\binom{\deg_G  a-m+1}{3}\\
   & & +(n-m-1)\binom{m-1}{2}
    +(m-1)\binom{\deg_G a -m+1}{2}.
\end{eqnarray*}
Thus, we get the desired formula. This completes the proof.
\end{proof}

As an immediate consequence, we recover \cite[ Theorem 3.1]{jayanthan_almost_2021}:
\begin{corollary}
    Let $T$ be a tree on $[n]$.  Then
\[
        \beta_1(J_T) = 
        \binom{n-1}{2}+\sum_{w \in V(T)}\binom{\deg_T w}{3}.
\]        
\end{corollary}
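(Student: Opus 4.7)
The plan is to derive this corollary as the $m = 1$ specialization of \Cref{T+K_m}. First I would observe that any tree $T$ on $[n]$ can be viewed as the clique-sum $T \cup_a K_1$ for any chosen vertex $a \in V(T)$, where $K_1$ denotes the trivial clique on $\{a\}$; this is consistent with the definition of clique-sum from the preliminaries, since the induced subgraph on $V(T) \cap \{a\} = \{a\}$ is $K_1$, and in this case $T \cup_a K_1 = T$ with $n = |V(T)|$.

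Next, I would substitute $m = 1$ into the six-term formula furnished by \Cref{T+K_m} and simplify. Three of the summands vanish: $2\binom{1}{3} = 0$, $(n-m-1)\binom{m-1}{2} = (n-2)\binom{0}{2} = 0$, and $(m-1)\binom{\deg_G a - m + 1}{2} = 0 \cdot \binom{\deg_T a}{2} = 0$. The remaining terms simplify as $\binom{\deg_G a - m + 1}{3} = \binom{\deg_T a}{3}$ and $\sum_{w \notin V(K_m)} \binom{\deg_G w}{3} = \sum_{w \neq a} \binom{\deg_T w}{3}$. Fusing the last two produces the full sum $\sum_{w \in V(T)} \binom{\deg_T w}{3}$, giving
\[
\beta_1(J_T) = \binom{n-1}{2} + \sum_{w \in V(T)} \binom{\deg_T w}{3},
\]
as claimed.

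There is no real obstacle here beyond confirming that $m=1$ is admissible in \Cref{T+K_m}. The base case ($|V(T)| = 1$) then reads $G = K_1$, $J_G = 0$, and both sides of the formula evaluate to zero; the inductive step in the proof of \Cref{T+K_m}, which only ever uses \Cref{maintheo} together with the identity $|E(G)| = n - m + \binom{m}{2}$, goes through verbatim when $m = 1$. Hence the corollary follows at once by direct substitution, and one also sees that the $\binom{n-1}{2}$ contribution in the corollary is precisely the edge count of any tree on $[n]$ encoded via the quotient-Betti identity $\beta_0(R/J_T) = \binom{n-1}{2} + \cdots$ hidden inside \Cref{T+K_m}'s proof, which offers a sanity check on the simplification.
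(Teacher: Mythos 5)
Your argument is correct and is exactly the paper's proof: the paper likewise writes $T = T \cup_a K_1$ and sets $m=1$ in \Cref{T+K_m}, with the same simplifications you carry out. One caveat: your closing ``sanity check'' is spurious --- $\binom{n-1}{2}$ is not the edge count of a tree on $[n]$ (that is $n-1$, which is $\beta_0(J_T)$, not $\beta_1$) --- but since that remark plays no role in the derivation, the proof itself stands.
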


\begin{proof}
If $G = T$, it can be trivially written as $G = T\cup_a K_1$, where $V(K_1) = \{a\}$. Therefore, taking $m=1$ in \Cref{T+K_m}
we get the desired formula.
\end{proof}

We now compute the second Betti number of a tree using
\Cref{T+K_m} and \Cref{maintheo}.
This Betti number also depends upon the
number of induced subgraphs
isomorphic to the following caterpillar tree.  We first fix the notation for this graph.

\begin{definition}
Let $P$ be the graph with $V(P)=[6]$ and 
$E(P) = \{\{1,2\}, \{2,3\},\\ \{3,4\}, \{2,5\}, \{3,6\} \}$. Given a tree $T$, we define $\mathcal{P}(T)$ to be the collection of all subgraphs of $T$ which are isomorphic to $P$, as shown in \Cref{fig:graph6}. Let $P(T) = |\mathcal{P}(T)|$.
\end{definition}

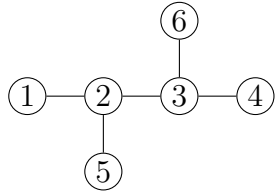
\begin{figure}[ht]
    \centering

\begin{tikzpicture}[every node/.style={circle, draw, fill=white!60, inner sep=1.5pt}, node distance=2cm]
    \node (1) at (0, 0) {1};
    \node (2) at (1, 0) {2};
    \node (3) at (2, 0) {3};
    \node (4) at (3, 0) {4};
    \node (5) at (1, -1) {5};
    \node (6) at (2, 1) {6};

    \draw (1) -- (2);
    \draw (2) -- (3);
    \draw (3) -- (4);
    \draw (2) -- (5);
    \draw (3) -- (6);
\end{tikzpicture}

    \caption{The graph $P$}
    \label{fig:graph6}
\end{figure}

\begin{example}\label{ex:pt}
Consider the graph $G$ of \Cref{fig:example of P} with $V(G) = [7]$ and $$E(G) = \{\{1,2\}, \{2,3\}, \{3,4\}, \{2,5\},\\ \{3,6\}, \{3,7\}\}.$$ For this graph, the collection $\mathcal{P}(G)$ will be the induced subgraphs on the following collections of vertices: $\mathcal{P}(G)=\{\{1,2,3,4,5,6\}, \{1,2,3,5,6,7\}, \{1,2,3,4,5,7\}\}$. Hence, $P(G)=3$.

\begin{figure}[ht]
    \centering
    \begin{tikzpicture}[every node/.style={circle, draw, fill=white!60, inner sep=1.5pt}, node distance=2cm]
    \node (1) at (0, 0) {1};
    \node (2) at (1, 0) {2};
    \node (3) at (2, 0) {3};
    \node (4) at (3, 0) {4};
    \node (5) at (1, -1) {5};
    \node (6) at (2, 1) {6};
    \node (7) at (2, -1) {7};

    \draw (1) -- (2);
    \draw (2) -- (3);
    \draw (3) -- (4);
    \draw (2) -- (5);
    \draw (3) -- (6);
    \draw (3) -- (7);
\end{tikzpicture}
    \caption{The graph $G$}
    \label{fig:example of P}
\end{figure}
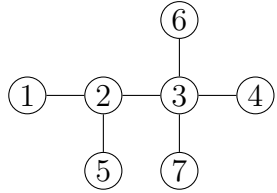

\end{example}

\begin{theorem}\label{betti2tree}
Let $T$ be a tree on $[n]$, and let $J_T$ be its binomial edge ideal. Then
    \[\beta_2(J_T) = \binom{n-1}{3}+ 2\sum_{w \in V(T)}\binom{\deg_T w}{4}+\sum_{w \in V(T)}\binom{\deg_T w}{3}(1+|E(T\setminus w)|)+P(T).\]
\end{theorem}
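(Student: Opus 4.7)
I would argue by induction on $n = |V(T)|$. The base case $n = 2$ is immediate: $J_T$ is principal, so $\beta_2(J_T) = 0$, and every term on the right-hand side vanishes. For the inductive step, invoke \Cref{pendantexist} to select a vertex $u \in V(T)$ with $\deg_T u > 1$ having at most one non-leaf neighbor, pick a leaf neighbor $v$ of $u$, and set $e = \{u, v\}$. Since $v$ is pendant, \Cref{maintheo} yields
\[
\beta_2(J_T) \;=\; \beta_2(J_{T \setminus v}) \;+\; \beta_1(J_{(T \setminus e)_e}),
\]
where we use that the isolated vertex $v$ in $T \setminus e$ does not change Betti numbers.

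The first summand is handled by the inductive hypothesis. For the second, the structure of $(T \setminus e)_e$ follows from the choice of $u$: every member of $N_T(u) \setminus \{v\}$ is a leaf of $T \setminus v$ except possibly one vertex $w$. Thus, adding the clique on $N_{T \setminus e}(u)$ creates a $K_m$ with $m = \deg_T u$, attached to the rest of $T \setminus v$ only at $w$ (if such a $w$ exists). Ignoring the isolated $v$, the graph $(T \setminus e)_e$ is therefore either the complete graph $K_{n-1}$ (when $T$ is a star) or a clique-sum $T''' \cup_w K_m$ with $T'''$ being the connected component of $T \setminus u$ containing $w$. In either case, \Cref{completebetti} or \Cref{T+K_m} provides a closed form for $\beta_1(J_{(T \setminus e)_e})$ in terms of the degree sequence of $T$.

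What remains is algebraic bookkeeping. Using the simplification $1 + |E(T \setminus w)| = n - \deg_T w$ (valid for any tree), I would subtract the claimed formula for $\beta_2(J_{T \setminus v})$ from the claimed formula for $\beta_2(J_T)$; since only $\deg_T u$ differs (increasing by $1$) and $v$ is a leaf contributing nothing, most terms telescope via $\binom{a}{r} - \binom{a-1}{r} = \binom{a-1}{r-1}$. After cancellation, the identity to verify reduces to
\[
P(T) - P(T \setminus v) \;=\; (\deg_T u - 2)\binom{\deg_T w - 1}{2},
\]
with both sides understood to be zero when no such $w$ exists (the star case).

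I expect this combinatorial identity to be the main obstacle. Any induced copy of $P$ in $T$ containing $v$ must place $v$ at one of the four leaf positions of $P$, forcing $u$ onto the adjacent spine vertex. The opposite spine vertex must have degree $\geq 3$ in $T$ and be a neighbor of $u$, so by the choice of $u$ it is necessarily $w$. The remaining vertex on $u$'s side of $P$ is any neighbor of $u$ other than $v$ and $w$, giving $\deg_T u - 2$ options (all of which are automatically leaves of $T$, so no additional edges can spoil the induced subgraph), while the two remaining vertices on $w$'s side are any two neighbors of $w$ other than $u$, giving $\binom{\deg_T w - 1}{2}$ options; the tree property of $T$ rules out any spurious edges. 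Multiplying produces the asserted count and closes the induction.
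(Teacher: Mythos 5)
Your proposal follows the paper's own proof essentially step for step: induction on $n$ using \Cref{pendantexist} to pick the pendant edge $e=\{u,v\}$, the reduction $\beta_2(J_T)=\beta_2(J_{T\setminus v})+\beta_1(J_{(T\setminus e)_e})$ from \Cref{maintheo}, the identification of $(T\setminus e)_e$ as a clique-sum covered by \Cref{T+K_m}, and the key count $P(T)-P(T\setminus v)=(\deg_T u-2)\binom{\deg_T a-1}{2}$, which you justify correctly. The ``algebraic bookkeeping'' you defer is precisely the page-long telescoping computation the paper carries out, and it closes exactly as you predict.
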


\begin{proof}
We prove the assertion by induction on $n$. If $n=2$, then $T$ is an edge. Since $J_T$ is a principal ideal, we have $\beta_{2}(J_T) = 0$, which agrees with the above formula. Now, assume that $n > 2$ and that the above formula is true for trees with $V(T)\leq n-1$. 
    
Let $T$ be a tree with $|V(T)|=n$. We know from \Cref{pendantexist} that there exists a vertex $u$ such that $\deg u>1$ and $|S_u|\leq 1$. Let $e = \{u,v\}$ be an edge such that $v$ is a pendant vertex. If $S_u = \emptyset$, then $T = K_{1,n-1}$. In this situation, the expression in the theorem statement reduces to $\binom{n-1}{3} + 2\binom{n-1}{4} + \binom{n-1}{3}.$  It is an easy verification that this number matches with the formula we obtained in \Cref{star}. 

We now assume that $|S_u| = 1$.
By the choice of $u$, we can see that $(T\setminus e)_e = (T\setminus v)\cup_a K_m \sqcup \{v\}$, where $S_u = \{a\}$ and $m = \deg_T u$. Let $G' = (T\setminus v)\cup_a K_m$. Then $|V(G')| = n-1$ and $J_{G'} = J_{(T\setminus e)_e}$.  
Observe that $\deg_{(T\setminus e)_e} a = \deg_T a + m-2$. Thus, from \Cref{T+K_m}, we get
    \begin{eqnarray*}
    \beta_1\left(J_{(T\setminus e)_e}\right) &= & \binom{n-2}{2} +2\binom{m}{3} + \sum_{w\notin V(K_m)}\binom{\deg_{(T\setminus e)_e} w}{3} +\binom{\deg_{(T\setminus e)_e} a-m+1}{3}\\ & &+(n-m-2)\binom{m-1}{2} + (m-1)\binom{\deg_{(T\setminus e)_e} a -m+1}{2}\\
    &= & \binom{n-2}{2} +2\binom{\deg_T u}{3} + \sum_{w\notin V(K_m)}\binom{\deg_T w}{3} +\binom{\deg_T  a-1}{3}\\
    & &+(n-\deg_T u-2)\binom{\deg_T u-1}{2} + (\deg_T u-1)\binom{\deg_T a-1}{2}.
    \end{eqnarray*}
Let $T' = T\setminus v$. Then $J_{T'} = J_{T\setminus e}$. Note that $|V(T')| = n-1,$ $\deg_{T'} u = \deg_T u-1$, and 
$\deg_{T'}x = \deg x$ for all $x \in V(T) \setminus\{u\}.$
Additionally $|E(T'\setminus u)| = |E(T \setminus u)|$ and $|E(T' \setminus w)| = |E(T \setminus w) | -1$ for all $w \neq u$. By the induction hypothesis, 
\begin{eqnarray*}
\beta_2(J_{T'})  & = & \binom{n-2}{3} + 2\sum_{w\neq u}\binom{\deg_T w}{4} + 2\binom{\deg_T u-1}{4} \\
        & &+\sum_{w\neq u}\binom{\deg_T w}{3}(|E(T\setminus w)|)+\binom{\deg_T u-1}{3}(|E(T \setminus u)|+1)+P(T').  
\end{eqnarray*}
Thus, it follows from \Cref{maintheo} that
    \begin{eqnarray*}
        \beta_2(J_{T}) &= & \binom{n-2}{3}+ 2\sum_{w\neq u}\binom{\deg_T w}{4}+ 2\binom{\deg_T u-1}{4} \\
        & &+\sum_{w\neq u}\binom{\deg_T w}{3}(|E(T\setminus w)|)+\binom{\deg_T u-1}{3}(|E(T \setminus u)|+1)+P(T')\\
        & &+\binom{n-2}{2}+2\binom{\deg_T u}{3}+\sum_{w\notin V(K_m)}\binom{\deg_T w}{3}+\binom{\deg_T a-1}{3}\\ & &+(n-\deg_T u-2)\binom{\deg_T u-1}{2}+(\deg_T u-1)\binom{\deg_T a-1}{2}.
    \end{eqnarray*}
Note that for all $w \in N_{T'}(u) \setminus \{a\}$, $\deg_{T'}(w) = 1$. Thus $\binom{\deg_{T'} w}{3} = 0$  for all $w\in N_{T'}(u) \setminus \{a\}$. Hence, none of the $w$, $w \neq a$, for which $\binom{\deg_T w}{3} \neq 0$ belong to $V(K_m)$ in $(T\setminus e)_e$. Thus we can write 
\[\sum_{w\neq u}\binom{\deg_T w}{3}(|E(T\setminus w)|) + \sum_{w\notin V(K_m)}\binom{\deg_T w}{3} = \sum_{w\neq u}\binom{\deg_T w}{3}(|E(T\setminus w)|+1).\]
To compare $P(T)$ and $P(T\setminus e)$, observe that the only elements of $\mathcal{P}(T)$ which are not in $\mathcal{P}(T\setminus e)$ are the induced subgraphs which contain the edge $e$. Since $a$ is the only neighbor of $u$ having degree more than one, 
the total number of such graphs is $(\deg_T u -2)\binom{\deg_T a-1}{2}$. Thus $P(T\setminus e) = P(T) - (\deg_T u -2)\binom{\deg_T a-1}{2}.$ Note also that $|E(T\setminus u)| =n-\deg_T u -1$.

Incorporating the above observations in the expression for $\beta_2(J_T)$, and using the identity $\binom{n}{r} = \binom{n-1}{r-1} + \binom{n-1}{r}$, we get
\footnotesize 
\begin{eqnarray*}
     \beta_2(J_T) &= & \binom{n-1}{3} + 2\sum_{w\neq u}\binom{\deg_T w}{4} + 2\binom{\deg_T u-1}{4}+\sum_{w\neq u,a}\binom{\deg_T w}{3}(|E(T\setminus w)|+1) \\
    & &+\binom{\deg_T a}{3}(|E(T\setminus a)|)+\binom{\deg_T u-1}{3}(|E(T\setminus u)|+1)+P(T)+\binom{\deg_T a-1}{2}\\ 
    & &+2\binom{\deg_T u}{3}+\binom{\deg_T a-1}{3}+(|E(T\setminus u)|-1)\binom{\deg_T u-1}{2}\\
    &= & \binom{n-1}{3}+ 2\sum_{w\neq u}\binom{\deg_T w}{4} + 2\binom{\deg_T u-1}{4} +\sum_{w\neq u,a}\binom{\deg_T w}{3}(|E(T\setminus w)|+1)\\
    & &+\binom{\deg_T a}{3}(|E(T\setminus a)|+1)+\binom{\deg_T u}{3}(|E(T\setminus u)|+1)\\ & &+P(T)+2\binom{\deg_T u}{3}-2\binom{\deg_T u-1}{2}\\
    &= & \binom{n-1}{3}+ 2\sum_{w\neq u}\binom{\deg_T w}{4} + 2\binom{\deg_T u-1}{4}+\sum_{w}\binom{\deg_T w}{3}(|E(T\setminus w)|+1)\\
    & &+P(T) +2\binom{\deg_T u-1}{3} \\
    &= & \binom{n-1}{3} + 2\sum_{w}\binom{\deg_T w}{4} +\sum_{w}\binom{\deg_T w}{3}(1+|E(T\setminus w)|)+P(T).
    \end{eqnarray*}
    \normalsize
    We have now completed the proof.
\end{proof}

It can be seen that \Cref{betti2tree} builds on  \cite[Theorem 3.1]{jayanthan_almost_2021}. We conclude our article by computing certain graded Betti numbers of binomial edge ideals of trees.

\begin{theorem}\label{thirdrow}
Let $T$ be a tree and $J_T$ be its corresponding binomial edge ideal. Then,
\[\beta_{k,k+3}(J_T) = \sum_{w\in V(T)}k\binom{\deg_T w+1}{k+2}\text{ for all k $\geq 2$}.\]
\end{theorem}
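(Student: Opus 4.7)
The plan is to argue by induction on $n = |V(T)|$. The base case $n=2$ is the single edge $K_2$, for which $J_T$ is principal so $\beta_{k,k+3}(J_T)=0$ for all $k \geq 1$; meanwhile the right-hand side is $2k\binom{2}{k+2}=0$ for $k \geq 1$, so the identity holds vacuously.

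For the inductive step with $n \geq 3$, choose any pendant vertex $v \in V(T)$ and let $u$ be its unique neighbor, $e = \{u,v\}$, and $m = \deg_T u$. Theorem \ref{maintheo} applies to the cut edge $e$ (whose endpoint $v$ is pendant) and yields
\[\beta_{k,k+3}(J_T) = \beta_{k,k+3}(J_{T\setminus e}) + \beta_{k-1,k+1}(J_{(T\setminus e)_e}).\]
Because $v$ is isolated in $T \setminus e$, we have $\beta_{k,k+3}(J_{T\setminus e}) = \beta_{k,k+3}(J_{T\setminus v})$, and $T\setminus v$ is a tree on $n-1$ vertices to which I would apply the induction hypothesis, noting that $\deg_{T\setminus v} u = m-1$ while $\deg_{T \setminus v} w = \deg_T w$ for all other $w$.

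The crux of the argument is computing the linear-strand Betti number $\beta_{k-1,k+1}(J_{(T\setminus e)_e})$. The graph $(T\setminus e)_e$ is the disjoint union of the isolated vertex $v$ with $(T\setminus v) \cup_u K_m$, where the clique $K_m$ is supported on $N_T[u]\setminus\{v\}$. By Theorem \ref{linearbinom},
\[\beta_{k-1,k+1}(J_{(T\setminus e)_e}) = k\, f_k(\Delta((T\setminus e)_e)).\]
For $k \geq 2$, a $k$-dimensional face of $\Delta((T\setminus e)_e)$ is a clique of size $k+1 \geq 3$. Since $T\setminus v$ is a tree and hence triangle-free, and since the two pieces of the clique-sum share only the vertex $u$, every such clique must lie entirely inside $K_m$. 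Therefore $f_k(\Delta((T\setminus e)_e)) = \binom{m}{k+1}$, which gives
\[\beta_{k-1,k+1}(J_{(T\setminus e)_e}) = k\binom{\deg_T u}{k+1}.\]

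Finally I would combine everything. The inductive hypothesis, rewritten with the degree shift at $u$, reads
\[\beta_{k,k+3}(J_{T\setminus v}) = \sum_{w \neq u,v} k\binom{\deg_T w + 1}{k+2} + k\binom{\deg_T u}{k+2}.\]
Adding $k\binom{\deg_T u}{k+1}$ and invoking Pascal's identity $\binom{\deg_T u}{k+2} + \binom{\deg_T u}{k+1} = \binom{\deg_T u + 1}{k+2}$ absorbs the two $u$-terms into a single $k\binom{\deg_T u + 1}{k+2}$. Since $k\binom{2}{k+2} = 0$ for $k \geq 1$, the pendant $v$ (of degree $1$ in $T$) contributes nothing, so the sum extends to all of $V(T)$ and yields exactly the right-hand side of the theorem. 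The main obstacle is the clique-complex analysis of $(T\setminus e)_e$: recognizing that triangle-freeness of $T\setminus v$ forces all higher-dimensional faces to come from $K_m$ is precisely what makes the induction close cleanly, and is also what fails at $k=1$ (where the tree edges of $T\setminus v$ contribute to $f_1$ and break the Pascal collapse), explaining the hypothesis $k \geq 2$.
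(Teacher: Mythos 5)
Your argument is essentially the paper's proof: induction on $|V(T)|$, the complete Betti splitting of \Cref{maintheo} at a pendant edge, the linear-strand formula of \Cref{linearbinom} applied to $(T\setminus e)_e$, and the Pascal identity to absorb the two $u$-terms. The one genuine difference is that you split at an \emph{arbitrary} pendant edge, whereas the paper invokes \Cref{pendantexist} to choose $u$ with at most one non-leaf neighbour, so that $(T\setminus e)_e$ is literally a clique-sum $T''\cup_a K_m$ over a single vertex. Your choice is a mild simplification and it does work here, because for this theorem only the clique complex of $(T\setminus e)_e$ matters --- but it makes one sentence of your justification false: for a general pendant edge the tree $T\setminus v$ and the clique $K_m$ on $N_T[u]\setminus\{v\}$ share \emph{all} $m$ vertices of $K_m$, not ``only the vertex $u$.'' The claim you actually need --- that every clique of size $k+1\geq 3$ in $(T\setminus e)_e$ lies inside $K_m$, so $f_k(\Delta((T\setminus e)_e))=\binom{m}{k+1}$ --- is still true, but the correct reason is acyclicity rather than the clique-sum structure: a triangle must use a new edge $\{a,b\}$ with $a,b\in N_{T\setminus e}(u)$, and if its third vertex $c$ lay outside $N_{T\setminus e}[u]$ then $a\text{--}u\text{--}b$ and $a\text{--}c\text{--}b$ would be two distinct paths between $a$ and $b$ in the tree $T\setminus v$, a contradiction. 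With that one-line repair the induction closes exactly as you describe, and your closing remark about why the argument fails at $k=1$ is correct.
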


\begin{proof}
We prove the assertion by induction on $|V(T)|$. Let $|V(T)|=n=2$. Then $J_T$ is the binomial edge ideal of a single edge. Since this is a principal ideal generated in degree $2$, $\beta_{k,k+3}(J_T)=0$ for all $k\geq 2$, which agrees with the formula. Suppose the assertion is true for all trees with $n-1$ vertices. Let $T$ be a tree with $|V(T)| = n$. Using \Cref{pendantexist}, consider $e=\{u,v\} \in E(T)$, where $u$ is such that $\deg u>1$ and $|S_u|\leq 1$. Then, using \Cref{maintheo}, we get
    \[\beta_{k,k+3}(J_T) = \beta_{k,k+3}(J_{T\setminus e})+ \beta_{k-1,k+1}(J_{(T\setminus e)_e}).\]
Let $T' = T \setminus v$. Then $J_{T'} = J_{T\setminus e}$, $\deg_{T'} u = \deg_T u - 1$ and $\deg_{T'} w = \deg_T w$ for all $w \in V(T') \setminus u$. Also, $(T\setminus e)_e$ is a clique sum of a tree and a complete graph, with the size of the complete graph equal to $\deg u$. Hence using the inductive hypothesis and \Cref{linearbinom} we get:
    \begin{align*}
        & \beta_{k,k+3}(J_{T\setminus e}) = \sum_{w\neq u}k\binom{\deg_T w+1}{k+2} + k\binom{\deg_T u}{k+2},~~\mbox{and}\\
        & \beta_{k-1,k+1}(J_{(T\setminus e)_e}) = k\binom{\deg_T u}{k+1}.
    \end{align*}
Substituting these values into \Cref{maintheo} we get:
    \[\beta_{k,k+3}(J_T) = \sum_{w\neq u}k\binom{\deg_T w+1}{k+2} + k\binom{\deg_T u}{k+2}+k\binom{\deg_T u}{k+1} = \sum_{w}k\binom{\deg_T w+1}{k+2}.\]
\end{proof}

\begin{example}
To illustrate some of the results of this section, consider
the tree $T$ described in \Cref{ex:pt}.  The graded Betti table
of $J_T$ is given below:
\begin{verbatim}
                0  1  2  3  4 5
        total:  6 20 41 43 21 4
            2:  6  .  .  .  . .       
            3:  . 20 12  3  . .      
            4:  .  . 29 40 21 4           
\end{verbatim}
It follows from \Cref{betti2tree} that for this tree 
$$\beta_2(J_T) = \binom{6}{3} + 2 \binom{4}{4} + \binom{4}{3}(1+2) + \binom{3}{3}(1+3) + 3 = 41.$$ 
Additionally, the Betti numbers of
the form $\beta_{k,k+3}(J_T)$ with $k \geq 2$ satisfy \Cref{thirdrow} 
since $\beta_{2,5}(J_T) = 2\left[\binom{4}{4} + \binom{5}{4}\right] = 12$ and $\beta_{3,6}(J_T) = 3 \binom{5}{5} = 3$.
\end{example}
\noindent
{\bf Acknowledgments.}
Some of the results in this paper first appeared in the MSc thesis 
of Sivakumar.  
Van Tuyl’s research is supported by NSERC Discovery Grant 2024-05299.

\bibliographystyle{amsplain}
\bibliography{Bibliography}

\end{document}